\newtheorem{theorem}{Theorem} 
\newtheorem*{theorem*}{Theorem \ref{thm: main}}
\newtheorem{lemma}{Lemma}[section]
\newtheorem{prop}{Proposition}[section]
\newtheorem{remark}{Remark}[section]
\newcommand{\F}{\mathcal{F}}
\newcommand{\real}{\mathbb{R}}
\newcommand{\eqn}{\begin{equation}}
\newcommand{\eeqn}{\end{equation}}
\def\R{\real}
\def\g{\gamma}  
\def\ep{\epsilon}
\def\al{\alpha}
\def\pa{\partial}
\def\nb{\nabla}
\def\om{\omega}
\def\F{\mathcal F}
\def\R{\mathcal R}
\def\S{\mathscr S}
\def\W{\mathcal W}
\def\ka{\kappa}
\def\bcr{\begin{color}{red}}
\def\bcb{\begin{color}{blue}}
\def\bcg{\begin{color}{Green}}
\def\bcv{\begin{color}{violet}}
\def\ec{\end{color}}
\begin{document}

\title{Existence of Rotating Stars with Variable Entropy}
\date{}



\author{Juhi Jang\thanks{Department of Mathematics, University of Southern California, Los Angeles, CA 90089, USA, and Korea Institute for Advanced Study, Seoul, Korea.  Email: juhijang@usc.edu.}, \ Walter A. Strauss\thanks{Department of Mathematics and Lefschetz Center for Dynamical Systems, Brown University, Providence, RI 02912, USA, Email: walter\_strauss@brown.edu.}, \ and Yilun Wu\thanks{Department of Mathematics, University of Oklahoma, Norman, OK 73069, USA, Email: allenwu@ou.edu.}}

\maketitle

\begin{abstract}  
We model a rotating star as a compressible fluid subject to gravitational forces.  
In almost all the mathematical literature the entropy is considered to be constant.  Here we allow it to be variable.  We consider a star that steadily rotates differentially around a fixed axis, say the $z$-axis.  We prove the existence of a family of such stars with small angular velocity $\om$ and small entropy variation $s$ and with an equation of state $p=Ke^s\rho^\g$.  Our analysis reduces to a hyperbolic equation for the modified entropy coupled to an elliptic equation for the modified density, together with a mass constraint.  
Due to the variable entropy and the consequent loss of both regularity and variational structure, all the methods in the previous literature fail.  We develop a new ad hoc perturbative strategy that allows us to construct rotating stars that bifurcate from the non-rotating ones. 

\end{abstract}

\section{Introduction} 
\subsection {Rotating Stars} 
The study of rotating stars is a classical topic in astrophysics and mathematics, and there has been a great deal of interest and activity for centuries. Early studies can be tracked back to Newton, Maclaurin, Jacobi, Poincar\'{e}, Liapunov {\it et al} who studied incompressible stars, while compressible stars began to be treated later by Lichtenstein \cite{Lichtenstein} and Chandrasekhar \cite{Chandra1933}. We refer to \cite{Chandrasekhar1967, jardetzky2013theories} for historical accounts on the topic. 
Other excellent general references are \cite{Tassoul,Hansen}.
The existence of rotating stars with a given angular velocity distribution is not a trivial task. In particular the support of a star is not known {\it a priori} and it is one of the unknowns. 

In the search for rotating star solutions there are two modern approaches. One is based on variational methods where the rotating solutions are obtained via a constrained minimization problem \cite{AuchmutyB, CaF, FrTur3, FLS, Li, LuoS, Wu16}, and the other is based on perturbative methods where the solutions are obtained by an implicit function theorem or contraction mapping principle around non-rotating Lane-Emden stars \cite{Heilig, JJTM, JJTM2, JSW19, SW17}.  
The second approach has been extended to construct a global set of rotating stars 
\cite{SW19}.  
Both approaches have been particularly successful to prove the existence of rotating star solutions with constant entropy, in which case an integral reformulation can be effectively used.

When it comes to rotating star solutions with variable entropy,  however, 
very few results are available, despite its physical importance \cite{Ireland, Tassoul}.   
The goal of this paper is to establish the existence of rotating star solutions  for self-gravitating perfect fluids with variable entropy.   
To the best of our knowledge, ours is the first mathematical result on rotating star solutions that allows variable entropy. Furthermore, ours is the first that allows the prescribed angular velocity $\om$ to depend on both of the cylindrical coordinates $r$ and $z$, while all prior works treated merely $r$-dependent angular velocity profiles. 
A system with nontrivial $z$ dependence is called a {\it barocline}.  
 Actual stellar rotations typically exhibit such dependence.  An example is our own sun  \cite{Tassoul}. 
 
 Our problem does not have a variational structure.  Although the implicit function theorem would seem like a natural technique, it does not work because of  the loss of regularity.  Instead we use an explicit iteration and we perform estimates in two different normed spaces  in Sections 4 and 5.  In order to make optimal use of the available regularity, most of our analysis is carried out in a bounded domain.    

\subsection{Euler-Poisson Model} 
Now we describe the main contents of the current work.    
We regard the star as a compressible inviscid fluid.  
The basic equations are those of Euler-Poisson which describe the conservation of mass, momentum and energy 
subject to self-gravity and the transport of entropy by the fluid.  
The equations are: 
\begin{align}  
&\rho_t + \nb\cdot(\rho \bold v) = 0, \label{eq: mass conserv}\\
&(\rho \bold v)_t + \nb\cdot(\rho \bold v \otimes \bold v) + \nb p = \rho \nb U  ,\label{eq: momentum conserv} \\
&s_t + \bold v\cdot \nb s = 0  , \label{eq: s transport}\\
&U(x,t) = G\int_{\real^3} \frac{\rho(x',t)}{|x-x'|} dx'  , \label{eq: g law}
\end{align} 
where $\rho$ is the density, $\mathbf v$ the velocity, $p$ the pressure, $s$ the entropy,  and $G$  Newton's universal constant of gravitation.  
The first three equations are valid in the region $\{\rho>0\}$ occupied by the star.
We assume the pressure $p$ is a function of  $\rho$ and $s$, specifically that  
the star is a  simple ideal gas:
\eqn \label{eq: state}
p = Ke^s \rho^\gamma,   \eeqn
as in \cite{CoFr} (p.6-7), \cite{Callen} (Section 3.4) and \cite{Fermi}(p. 61-62),  
where $K>0$ and $\gamma>1$ are constants.  
Note that the entropy transport equation \eqref{eq: s transport} is a consequence of the other conservation laws together with the energy conservation law:
\eqn
\left(\rho\left(\tfrac12|\mathbf v|^2+\varepsilon \right)\right)_t+\nabla\cdot \left(\rho\left(\tfrac12 |\mathbf v|^2+\varepsilon \right)\mathbf v\right)+\nabla \cdot (p\mathbf  v)=\rho\nabla U\cdot\mathbf v,
\eeqn
where 
$$\varepsilon=\tfrac{K}{\gamma-1}e^s\rho^{\gamma-1}$$
is the specific internal energy.
For notational convenience we choose $K = \tfrac{\gamma}{\gamma-1}$ and $G=1$ in the following, but the same results hold for any positive $K$ and $G$. 

Our main result states that there exists a family of rotating solutions 
which are axisymmetric and even in $z$ and which bifurcate from the 
non-rotating (Lane-Emden) radial solution $\rho^0$ that has a radius $R^0$ and zero entropy. 
(See Lemma \ref{lem: LE} for details of such non-rotating star solutions.) We denote by $B_{R^0}$ the ball with radius $R^0$ centered at the origin on which $\rho^0$ is supported. In the following theorem, let $\beta\in \left(0,\min\left(\tfrac{2-\gamma}{\gamma-1},1\right)\right)$ be given. 

\begin{theorem}\label{thm: intro main}
Assume $\tfrac65<\gamma<2$, and $\gamma\ne \tfrac43$. Let $B$ be an arbitrary open ball strictly containing $B_{R^0}$. Let $s_0\in C^{1,\beta}(\overline{B}\cap\{z=0\})$ be a given axisymmetric floor entropy profile on the equatorial plane $\{z=0\}$, and $\om\in C^{2,\beta}(\overline{B})$ be a given axisymmetric angular velocity profile that is even in $z$. Then there exists a family of solutions $(\rho_{\kappa,\mu},s_{\kappa,\mu})\in [C^{1,\beta}(\overline{B})]^2$ for $\kappa,\mu>0$ sufficiently small, which depend continuously on $(\kappa,\mu)$ and converge to $(\rho^0,0)$ as $(\kappa,\mu)\to 0$ such that each solution satisfies the following properties.
\begin{itemize}
\item $(\rho_{\kappa,\mu},s_{\kappa,\mu})$ is axisymmetric and even in $z$.
\item $(\rho_{\kappa,\mu},s_{\kappa,\mu})$ is a time independent solution of \eqref{eq: mass conserv}-\eqref{eq: state}, where the velocity $\mathbf v$
is an axisymmetric rotation with angular velocity $\sqrt{\kappa}~\omega(r,z)$.
\item $s_{\kappa,\mu}$ satisfies the floor entropy condition $$s_{\kappa,\mu}\big|_{z=0}=\mu s_0.$$
\item $\rho_{\kappa,\mu}$ is nonnegative, compactly supported in $B$ and has the same total mass
$$\int_{B}\rho_{\kappa,\mu}~dx=\int_{B_{R^0}}\rho^0~dx$$
as the Lane-Emden solution.
\end{itemize}
\end{theorem}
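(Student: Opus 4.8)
The plan is to recast the stationary axisymmetric problem as a first-order (hyperbolic) transport equation for the entropy coupled to a semilinear elliptic problem for a modified density, together with a single scalar mass constraint, and to solve the coupled system by an explicit iteration --- not by the implicit function theorem, which is unavailable here. In cylindrical coordinates $(r,\theta,z)$ I take the velocity to be the rotation $\mathbf v=\sqrt{\ka}\,\om(r,z)\,r\,\mathbf e_{\theta}$. For such a field the mass equation \eqref{eq: mass conserv} and the entropy equation \eqref{eq: s transport} hold identically, so the steady Euler--Poisson system reduces to \eqref{eq: g law} together with the two meridional components of \eqref{eq: momentum conserv}, namely $\pa_r p=\rho\,\pa_r U+\ka\,\om^2 r\,\rho$ and $\pa_z p=\rho\,\pa_z U$. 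Using the state law \eqref{eq: state} I pass to the modified enthalpy $\Psi:=\tfrac{K\g}{\g-1}\,e^{(\g-1)s/\g}\rho^{\g-1}$, for which $\rho^{-1}\nb p=e^{s/\g}\nb\Psi$, so the momentum balance becomes
\begin{equation}
\nb\Psi=e^{-s/\g}\bigl(\nb U+\ka\,\om^2 r\,\mathbf e_r\bigr).
\end{equation}
The integrability (curl-free) condition for the right-hand side is a transport equation for the entropy,
\begin{equation}
\pa_z U\,\pa_r s-\bigl(\pa_r U+\ka\,\om^2 r\bigr)\pa_z s=-\g\,\ka\,\pa_z\bigl(\om^2 r\bigr),
\end{equation}
whose characteristics in the meridional plane are $O(\ka)$ perturbations of the (circular) level curves of $U$, each crossing $\{z=0\}$ exactly once; hence the floor value $s=\mu s_0$ determines $s$ uniquely by the method of characteristics, the resulting $s$ being consistent at the symmetry axis and even in $z$ because the source is odd in $z$. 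Given $s$, one recovers $\Psi$ up to an additive constant $\Psi_0$ by integrating its gradient, defines $\rho=\bigl(\tfrac{\g-1}{K\g}e^{-(\g-1)s/\g}(\Psi_0+\widetilde\Psi)\bigr)_+^{1/(\g-1)}$, and closes the loop via the Poisson relation $-\Delta U=4\pi\rho$; the constant $\Psi_0$ is fixed by the mass constraint $\int\rho=\int_{B_{R^0}}\rho^0$. At $\ka=\mu=0$ this collapses to the Lane--Emden problem for $\rho^0$ of Lemma \ref{lem: LE}.

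The construction then proceeds by iteration on the fixed ball $\overline B$, starting from $(\rho^0,U^0,0)$: given $(\rho^{(k)},U^{(k)},s^{(k)})$, I (i) solve the transport equation for $s^{(k+1)}$ with coefficients built from $U^{(k)}$ and floor data $\mu s_0$, which makes $e^{-s^{(k+1)}/\g}(\nb U^{(k)}+\ka\,\om^2 r\,\mathbf e_r)$ curl-free and so its path integral $\widetilde\Psi^{(k+1)}$ well defined; (ii) choose $\Psi_0^{(k+1)}$ so that the density formula gives the prescribed total mass --- well-posed near $\rho^0$ precisely when $\g\ne\tfrac43$, the classical non-degeneracy of the linearized Lane--Emden problem with fixed mass; (iii) set $\rho^{(k+1)}$ from the density formula and let $U^{(k+1)}$ be its Newtonian potential. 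Schauder estimates give $U^{(k)}\in C^{3,\beta}$, hence the transport field in $C^{2,\beta}$ and $s^{(k+1)}\in C^{1,\beta}$; the density formula then returns $\rho^{(k+1)}\in C^{1,\beta}$, the hypothesis $\beta<\tfrac{2-\g}{\g-1}$ ($<1$) being exactly what makes $t\mapsto t_+^{1/(\g-1)}$ land in $C^{1,\beta}$ near the free boundary $\{\Psi_0+\widetilde\Psi=0\}$, where the gradient is nonzero for $\ka,\mu$ small (a perturbation of $\nb U^0\ne 0$ at $|x|=R^0$). The conditions $\tfrac65<\g<2$ enter through the compact support and the positive exponent of $\rho^0$. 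For $\ka,\mu$ small all iterates stay in a small $C^{1,\beta}$-ball about $(\rho^0,U^0,0)$ and remain axisymmetric and even in $z$.

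The main obstacle --- and the reason the implicit function theorem fails --- is proving the iteration converges. Linearizing the transport substep in $U$ produces a term carrying $\nb s$, which lies only in $C^{0,\beta}$, so the entropy-solution map is not $C^1$ between the natural $C^{1,\beta}$ spaces and the iteration cannot be contracted there. I would instead use a two-norm scheme (the content of Sections 4 and 5): first derive \emph{uniform} bounds for the iterates in the strong norm $C^{1,\beta}$, which control the transport field, the density formula, and the free boundary; then, separately, prove that consecutive differences contract in a \emph{weaker} norm --- a $C^{0,\beta'}$ or Lipschitz norm on $(\rho,s)$ with $\beta'<\beta$ --- using only those $C^{1,\beta}$ bounds, estimating differences of the characteristic flows by a Gronwall argument against $\|U^{(k)}-U^{(k-1)}\|_{C^1}$, which is in turn controlled by $\|\rho^{(k)}-\rho^{(k-1)}\|$ through the Newtonian potential, with the smallness of $\ka,\mu$ supplying the contraction factor. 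The iterates then converge in the weak norm; being bounded in the strong norm, the limit lies in $C^{1,\beta}$ and solves the reformulated system, so unwinding the reformulation yields the family $(\rho_{\ka,\mu},s_{\ka,\mu})$ with all the asserted properties, the continuous dependence on $(\ka,\mu)$ following from the uniform estimates.
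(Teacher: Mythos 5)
Your reformulation (entropy transport along level curves of the potential, modified enthalpy recovered from a curl-free field, density from the positive part, mass constraint fixing the additive constant) and your diagnosis of why the implicit function theorem fails (the linearized transport step loses a derivative, so one must bound in a strong H\"older norm and contract in a weaker one, then interpolate) both match the paper's strategy, which works with $S=e^{s/\gamma}$, $V=\rho^{\gamma-1}e^{(\gamma-1)s/\gamma}$, bounds the iterates in $C^{2,\beta}$, contracts in $C^{1,\beta}$, and interpolates to $C^{2,\beta'}$. Your recovery of $\Psi$ by a path integral rather than by solving the divergence equation with a matched boundary condition on $\partial B_R$ is a legitimate variant of the paper's div--curl lemma.

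However, there is a genuine gap in the convergence argument: you iterate the \emph{plain} physical map $U^{(k)}\mapsto U^{(k+1)}$, and claim the contraction factor in the weak norm is supplied by the smallness of $\kappa,\mu$. It is not. Linearizing your loop at the Lane--Emden solution gives
$\delta U^{(k+1)}=\tfrac{q}{|\cdot|}*\bigl((V^0)_+^{q-1}(\delta U^{(k)}+\delta\Psi_0)\bigr)$,
an $O(1)$ operator independent of $\kappa,\mu$; the smallness of the parameters only shrinks the inhomogeneous and quadratic remainders, not this linear part. This operator is exactly $D\F^0$ in the paper, it is not a contraction in any of the relevant norms (the scaling mode $\partial_a\bigl(a^{2/(q-1)}V^0(ax)\bigr)\big|_{a=1}$ already sits at eigenvalue $1$, and the ground-state eigenvalue generically exceeds $1$), so consecutive differences $U^{(k+1)}-U^{(k)}$ do not decay. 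The paper's remedy is to run the Newton-type iteration $x_{n+1}=x_n-(I-D\F^0)^{-1}\bigl(x_n-\F(x_n)\bigr)$, which cancels the linear part and leaves only the small remainder; the price is proving that $I-D\F^0$ is invertible (Lemma \ref{lem: Lambda invertible}), a substantive argument via spherical harmonics, the strong and Hopf maximum principles, and the scaling mode, and this is precisely where $\gamma\neq\tfrac43$ enters (the mass functional is degenerate on the scaling mode exactly when $q=3$). In your scheme $\gamma\neq\tfrac43$ is invoked only to fix the constant $\Psi_0$ at each step, but that step is always solvable since $\int q(\cdots)_+^{q-1}>0$; as written, your proof never uses $\gamma\neq\tfrac43$ in an essential way, which signals that the preconditioning step --- and with it the convergence of the iteration --- is missing.
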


In Theorem \ref{thm: intro main}, the parameters $k$ and $\mu$ characterize the respective intensity of rotation and floor entropy. The exceptional index $\gamma=4/3$ is excluded from the theorem. It is the mass critical index for which all the radial Lane-Emden solutions have the identical total mass. In this exceptional case there is a trivial curve of non-rotating solutions which are related to each other by scaling. Moreover, in case 
$\gamma=4/3$ with constant entropy and angular velocity, it is shown in \cite{SW20}  that there does not exist any slowly rotating solution close to a given Lane-Emden solution with the same total mass. So in that case the local solution space is exhausted by the trivial curve of non-rotating stars.

\subsection{Related Literature}  
So far as we know, the only works in the literature that are related to the Euler-Poisson equations with variable entropy are  \cite{DLY, DY, LuoS0, Wu15, Yuan21}.  
{\it Our result differs from them in several important ways.} First of all, most of the earlier works that considered variable entropy were essentially restricted to solving only the divergence of the momentum equation \eqref{eq: momentum conserv} on a given domain with  prescribed entropy $s$, while the curl part was largely ignored. Therefore such solutions do not necessarily satisfy the Euler-Poisson system. In fact, even if the curl part holds, as for instance in the case of radial symmetry, in order to recover the momentum equation from the divergence and the curl components, one would need an extra boundary condition (different from the Dirichlet condition of $\rho$) on the surface of the star. Without such a boundary condition, the two sides of the momentum equation would differ by the gradient of a harmonic function. See Section \ref{subsec: div curl} for a more detailed discussion. 
{\it This key step of going from the divergence and the curl back to the original system seems to be missing in all the prior works, with the exception of \cite{Yuan21}. }

In \cite{Yuan21}, the author uses a variational method on an integral formulation of the momentum equation, in a spirit analogous to the prior variational results \cite{AuchmutyB} with constant entropy. However, in order to fit variable entropy into this framework, the author has to work with functions that are constants on a given nested family of ellipsoids. As all the test functions for the Euler-Lagrange variation then have fixed ellipsoidal symmetry, the solutions obtained in \cite{Yuan21} merely solve ellipsoidal averages of the momentum equation and do not solve the original system. 

In summary, to the best of our knowledge, Theorem \ref{thm: intro main} is the first existence result of variable entropy rotating star solutions to the full Euler-Poisson system.  In order to obtain our result, we predetermine neither the entropy $s$ nor the fluid domain of the star. Both of them appear as unknowns in our formulation.

\subsection{Significance of the dependence on $z$}  
Moreover, all of the above mentioned works require an angular velocity profile $\omega$ that depends only on $r$. 
{\it In contrast, our result allows an arbitrary axisymmetric rotation speed $\omega$ that is even in $z$.} In our opinion, when allowing variable entropy, an $\omega$ with nontrivial $z$-dependence presents the more interesting case, and in a certain sense the only interesting case. In fact, as we shall see in Section \ref{subsec: div curl}, the curl of the momentum equation implies that $\omega^2$ depends only on $r$ if and only if $\nabla s$ and $\nabla \rho$ are  parallel vectors everywhere. In other words, if $\omega^2$ depends only on $r$, then $s$ must be constant on the level surfaces of $\rho$. This fact is known as the Poincar\'e-Wavre theorem in the physics literature (\cite{Tassoul}). If the solution is close to a nonrotating Lane-Emden star, all level surfaces of $\rho$ have different values. 
 In this case  
there exists a one-variable function $f$ such that $s=f(\rho)$ on the fluid domain. Such a solution is called a {\it barotrope}. 
Therefore the pressure $p$ also depends only on $\rho$. 
Finally, we point out that the rotation speed of our own sun 
depends on both $r$ and $z$ (\cite{Tassoul}), so that our result provides the first mathematical description in the Euler-Poisson model of our own star.  

Before beginning to discuss our method, we set the convention here that all functions dealt with in this paper are axisymmetric and are even in the $z$ variable. Therefore, every function space we will use is assumed to have the above mentioned symmetry without further declaration. In particular, a $C^1$ function $V$ with such symmetry will have the properties that $\nabla V(0,0,0)=0$, $\pa_{x_1} V(0,0,z)=0$, $\pa_{x_2} V(0,0,z)=0$, $\partial_z V(x_1,x_2,0)=0$.  
For the same reason, we identify a function defined for $r>0,z>0$ with a function defined on $\real^3$ with the above mentioned symmetry without further explanation.


\section{Reformulation}  

The main difficulties come from the presence of the nontrivial entropy.  First of all, the entropy cannot be prescribed arbitrarily but must be solved together with the other unknowns. In fact, by taking the curl of the momentum equation, one finds that the entropy satisfies a first order transport equation whose coefficients are given by the first derivatives of the density (or enthalpy).  The $z$ derivative of the angular velocity comes into the transport equation as a source term. This new transport structure highlights fundamental differences between constant and variable entropy.  For variable entropy, we are led to studying the ensuing system of equations which is of mixed elliptic-hyperbolic type, while for constant entropy we would be led to a purely elliptic problem. 
The nontrivial coupling between the entropy and the enthalpy precludes an integral reformulation.  

\subsection{Reduction to the Div-Curl System}\label{subsec: div curl}
As a first step to solving this problem, we must choose certain good variables that yield a system with better structure.  This is a crucial step in  our analysis.  
We use cylindrical coordinates $(r,z)$ where all the functions are independent of 
the cylindrical angle $\theta$.   
Steady flow means that all the time derivatives in \eqref{eq: mass conserv}-\eqref{eq: s transport} vanish.  
Thus the mass conservation equation \eqref{eq: mass conserv} is automatically satisfied.  
Rotation with angular velocity $\omega$ means that the velocity is 
$\bold v = \sqrt\ka\omega(r,z)r\bold e_\theta$, where $r\bold e_\theta=(-x_2,x_1,0)$.   
Then the entropy transport equation \eqref{eq: s transport} is automatically satisfied.  
The momentum equation \eqref{eq: momentum conserv} reduces to 
\eqn \label{eq: EP vector}
\frac{\nb p}\rho = \nb\left(\frac1{|\cdot|}*\rho\right) + \ka \omega^2r\bold e_r  , \eeqn 
where $r\bold e_r= (x_1,x_2,0)$. 
We introduce the new variables 
\eqn \label{eq: change of var}
S=e^{ s/\g}, \quad V= \rho^{\g-1} e^{(\g-1) s/\g}, \quad q = \frac1{\g-1}.    \eeqn 
Then \eqref{eq: EP vector} takes the form 
\eqn\label{eq: EP vector with V fluid domain}
S\nb V = \nb\left(\frac1{|\cdot|}*\frac{V^q} {S}\right)  +  \kappa\om^2r\bold e_r  . 
\eeqn
This equation is originally only required to hold on the fluid domain, namely  where $\rho>0$. 

However, we will look for a solution to the slightly modified equation
\eqn \label{eq: EP vector with V}
S\nb V = \nb\left(\frac1{|\cdot|}*\frac{V_+^q} {S}\right)  +  \kappa\om^2r\bold e_r,\eeqn
where $V_+=\max(V,0)$, in a large ball $B_R$ containing the fluid domain, 
where $R$ will be chosen later. 
The density $\rho$ will then be recovered as $V_+^q S^{-1}$. 
{\it In this way, the fluid domain is not prescribed in advance, but is obtained as the positivity set of the solution $V$.} 
Taking  the curl and the divergence of \eqref{eq: EP vector with V}, we obtain the two scalar equations 
\begin{align} 
S_rV_z-S_zV_r &= - \ka(\om^2 r)_z ,  \label{eq: curl}  \\
 \nb\cdot (S\nb V) &= -4\pi \frac{V_+^q}S + \ka\nb\cdot(\om^2r\mathbf e_r).\label{eq: div} 
 \end{align}
We will regard \eqref{eq: curl} as a transport equation for $S$ with coefficients given by components of $\nabla V$ and source term given by its right hand side.  Although we will treat \eqref{eq: curl} structurally as a transport equation, it has nothing directly to do with the actual entropy transport by fluid velocity given in \eqref{eq: s transport}.  
 Equation \eqref{eq: curl}  illustrates the effect of the dependence on $z$ of the angular velocity.  In fact, it is immediately clear that any $z$-dependence of $\om$ implies that the entropy is not constant.  

Equation \eqref{eq: div}, on the other hand, will be treated basically as a semilinear elliptic equation for $V$ with $S$ appearing as a coefficient. However, as we will see more clearly later, 
the coupling of $S$ with $V$ causes the whole problem to have a quasilinear flavor. 

We have to supplement \eqref{eq: curl} and \eqref{eq: div} with suitable boundary conditions. The first of these is the {\it floor entropy condition} (on the equatorial plane)
\eqn \label{eq: floor s}
S(r,0)=\exp\left({\frac{\mu s_0(r)}{\g}}\right),   \eeqn
where $s_0(r)$ is the floor entropy profile which we treat as essentially arbitrary, and $\mu$ is a small parameter. We will use this condition to uniquely solve the transport equation \eqref{eq: curl}.
Secondly, the bounded domain $B_R$ is employed in order to make maximal use of the elliptic regularity estimates.  Consequently,  we will need a key boundary condition of $V$ on $\partial B_R$ that will help us recover \eqref{eq: EP vector with V} from its curl and divergence. Such a boundary condition can be much simplified if we cut off $\omega^2$ and $s_0$ near $\partial B_R$. Such a cut-off will have no effect to our final solution. 

\subsection {Non-rotating Solution}
For clarification, let us be a bit more specific about the non-rotating Lane-Emden solution around which we perturb.
\begin{lemma}\label{lem: LE}
Given $\tfrac65<\gamma<2$ and $R^0>0$, there exists a unique function $V^0$ which is continuous on $\real^3$, positive in the ball $B_{R^0}$ and negative outside $B_{R^0}$, as well as a unique constant $\alpha^0$ such that
\eqn\label{eq: LE1}
V^0=\frac{1}{|\cdot|}*(V^0)_+^q+\alpha^0  
\eeqn
in all of $\real^3$.
Furthermore $(V^0,\alpha^0)$ has the following properties: $\alpha^0<0$, $V^0\in C^{3,\beta_0}(\real^3)$,  where $\beta_0=\min\left(q-1,1\right)$, and $V^0\big|_{B_{R^0}}\in C^\infty(B_{R^0})$. Furthermore, $V^0$ is radially symmetric and strictly decreasing, and 
\eqn\label{eq: V0 near 0}
\lim_{\tau\to 0^+}\frac{(V^0)'(\tau)}{\tau}<0.
\eeqn
Here $(V^0)'$ means the radial derivative of $V^0$.
\end{lemma}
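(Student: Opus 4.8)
The plan is to reduce \eqref{eq: LE1} to a standard Lane-Emden ODE via the ansatz of radial symmetry, solve that ODE, and then verify the regularity and monotonicity claims. First I would justify restricting to radial solutions: writing $U = \frac{1}{|\cdot|}*(V^0)_+^q$, the function $U$ is the Newtonian potential of the nonnegative, compactly supported source $(V^0)_+^q$, so $-\Delta U = 4\pi (V^0)_+^q$ in $\real^3$ and $U(x)\to 0$ as $|x|\to\infty$; adding a constant $\al^0$ preserves harmonicity at infinity only up to that constant, so the equation $V^0 = U + \al^0$ says precisely that $-\Delta V^0 = 4\pi (V^0)_+^q$ with $V^0 \to \al^0$ at infinity. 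By uniqueness for the corresponding fixed-point problem (which one proves by a monotonicity/maximum-principle argument on the map $W \mapsto \frac{1}{|\cdot|}*W_+^q + \al^0$, or by citing the classical Lane-Emden theory), the solution must be radial.

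Next I would set up the ODE. Inside the support, where $V^0 > 0$, radial symmetry gives $-\big((V^0)'' + \tfrac{2}{\tau}(V^0)'\big) = 4\pi (V^0)^q$. Introducing the standard Lane-Emden rescaling $\theta$ defined by $V^0 = c\,\theta$ with $\theta(0)=1$ and an appropriate scaling of the radial variable, this becomes the Lane-Emden equation $\frac{1}{\xi^2}(\xi^2 \theta')' = -\theta^q$ of index $q = 1/(\g-1)$ with $\theta(0)=1$, $\theta'(0)=0$. The condition $\tfrac65 < \g < 2$ translates to $1 < q < 5$, which is exactly the range in which the Lane-Emden solution $\theta$ is known to have a first zero $\xi_1 \in (0,\infty)$; this is where one fixes the scaling constant so that the first zero occurs at radius $R^0$. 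Outside $B_{R^0}$ the source vanishes, so $V^0$ is harmonic and radial there, hence of the form $a + b/\tau$; matching $V^0$ and $(V^0)'$ at $\tau = R^0$ (using $\theta(\xi_1)=0$, $\theta'(\xi_1)<0$) determines $a = \al^0$ and $b>0$, and since $b>0$ forces $V^0 < \al^0 = a < 0$ outside — here one checks $\al^0 < 0$ because the total Newtonian potential at the surface is strictly positive while $V^0$ there is zero, so $\al^0 = -\,(\text{potential at surface}) < 0$. Strict monotonicity of $V^0$ follows from $\theta' < 0$ on $(0,\xi_1]$ (immediate from the ODE, since $\xi^2\theta'$ is strictly decreasing) and from $b/\tau$ being strictly decreasing outside.

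For the regularity statements: inside $B_{R^0}$, $V^0 = c\,\theta$ is smooth because $\theta$ is real-analytic near $0$ (power series solution) and the ODE has smooth coefficients away from $\tau=0$ where $V^0>0$, giving $V^0|_{B_{R^0}}\in C^\infty$. The global regularity $V^0 \in C^{3,\beta_0}(\real^3)$ with $\beta_0 = \min(q-1,1)$ comes from elliptic regularity applied to $-\Delta V^0 = 4\pi (V^0)_+^q$: the right-hand side is $C^{0,\beta_0}$ because $t\mapsto t_+^q$ is $C^{1,\min(q-1,1)}$ near $0$ for $q>1$ and $V^0$ is Lipschitz (indeed $C^1$) across the free boundary $\partial B_{R^0}$ (the matching of first derivatives gives $C^1$; the second derivative jumps, consistent with $(V^0)_+^q$ vanishing to order $q>1$ but $V^0$ itself being only $C^{2,\beta_0}$ one-sidedly — more precisely $\Delta V^0$ is $C^{0,\beta_0}$ so Schauder gives $V^0\in C^{2,\beta_0}$, and bootstrapping once more using that $(V^0)_+^q$ is $C^{1,\beta_0}$ as a function on $\real^3$ gives $C^{3,\beta_0}$). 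Finally \eqref{eq: V0 near 0} is the statement $\lim_{\tau\to 0^+}(V^0)'(\tau)/\tau = c\,\theta''(0)/1$, and from the ODE $\theta''(0) = -\tfrac13\theta(0)^q = -\tfrac13 < 0$, so the limit is $c\,\theta''(0)\cdot(\text{scaling}) < 0$.

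I expect the main obstacle to be the uniqueness claim — both uniqueness of $(V^0,\al^0)$ as a pair and the a priori reduction to radial symmetry. Existence and all the quantitative properties follow from classical Lane-Emden ODE theory once the scaling is fixed, but showing that no non-radial solution of the integral equation exists requires a genuine argument: one natural route is to observe that any solution has $(V^0)_+^q$ compactly supported (since $V^0\to\al^0<0$ at infinity forces $V^0<0$ outside a large ball), apply the moving-plane method to the semilinear elliptic equation $-\Delta V^0 = 4\pi(V^0)_+^q$ on $\{V^0>0\}$ with the natural boundary/decay conditions to conclude radial symmetry, and then invoke ODE uniqueness (with the free-boundary matching) to pin down both $V^0$ and $\al^0$. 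If instead the authors prefer, uniqueness can be obtained by a direct comparison argument exploiting that $t\mapsto t_+^q$ is increasing together with the positivity-preserving nature of the Newtonian potential, but care is needed near the free boundary.
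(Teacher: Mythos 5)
Your proposal is correct and follows the classical Lane--Emden route; the paper does not actually prove this lemma but simply cites Chapter IV of \cite{Chandra} and Lemmas 3.2--3.3 of \cite{SW17}, which carry out essentially the argument you sketch (reduction of the convolution identity to $-\Delta V^0=4\pi (V^0)_+^q$, the Lane--Emden ODE of index $q=1/(\gamma-1)\in(1,5)$ with a finite first zero scaled to $R^0$, exterior harmonic matching giving $\alpha^0<0$, and a Schauder bootstrap for $C^{3,\beta_0}$ with $\beta_0=\min(q-1,1)$). Your identification of uniqueness and the a priori radial symmetry as the one step requiring a genuine argument is accurate, and your proposed resolution (Gidas--Ni--Nirenberg on $\{V^0>0\}=B_{R^0}$, followed by ODE uniqueness within the Lane--Emden scaling family to pin down the solution with first zero at $R^0$, which then determines $\alpha^0$) is the standard and correct way to close it.
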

\begin{proof}
The classical reference is Chapter IV of \cite{Chandra}. See also Lemmas 3.2 and 3.3 of \cite{SW17}.
\end{proof}

Obviously $V=V^0$ is a solution to \eqref{eq: EP vector with V} with $S\equiv 1$ and $\kappa=0$. Fixing a radius $R>R^0$, we will perturb $V^0$ to a solution $V$ corresponding to a rotating star. Since $V^0$ is strictly negative near $\partial B_R$, the support of $V_+$ will stay away from $\pa B_R$ if the perturbation is sufficiently small.  
The following useful observation is obvious.  
\begin{remark}\label{lem: support}
There exists a $\delta>0$ such that if $\|V-V^0\|_{C^0(\overline{B_R})}<\delta$, then the support of $V_+$ is contained in $B_{\frac{R^0+R}{2}}$.
\end{remark}
In our perturbative method, $V$ will always stay close to $V^0$ on $B_R$. Therefore, without loss of generality, we can first multiply $\omega^2$ and $s_0$ by a smooth cutoff function $\chi$ such that $\chi\equiv 1$ on $B_{\frac{R^0+R}{2}}$ and $\chi\equiv 0$ outside $B_{\frac{R^0+2R}{3}}$, and then solve the whole problem. Since the fluid domain will always be contained in $B_{\frac{R^0+R}{2}}$, the cutoff will be invisible to the solution. In the following, we will always assume that $\omega^2$ and $s_0$ are prepared in this way, and when appropriate, we will even extend these functions outside $B_R$ by zero. Such an extension is obviously smooth as well. 

\subsection{Boundary Condition and Main Theorem} 
As we shall see in Section \ref{sec: S est}, the solution $S$ to the transport equation \eqref{eq: curl} will be identically equal to 1 near $\partial B_R$, as a consequence of the cutoff we performed on $\omega^2$ and $s_0$. The same cutoff and \eqref{eq: EP vector with V} imply that 
\eqn
\nabla V=\nabla\left(\frac1{|\cdot|}*\frac{V_+^q}{S}\right)
\eeqn
near $\partial B_R$. We thus obtain the key {\it boundary condition} on $V$:
\eqn\label{eq: BC}
V=\frac{1}{|\cdot|}*\frac{V_+^{q}}{S}+\alpha \quad \text{ on }\partial B_R    
\eeqn
for some constant $\alpha$. Note that $V_+$ will be supported on $B_{\frac{R^0+R}{2}}$ by Remark \ref{lem: support}. Thus the function $V_+^qS^{-1}$ that appears under the convolution in \eqref{eq: BC} can be regarded as the zero extension from its values on $B_R$ to $\real^3$.

Finally, we prescribe the {\it mass constraint}  
\eqn \label{eq: mass}  
\int_{B_R} \frac{V_+^q}{S}\ dx = M=\int_{B_R}(V^0)_+^q~dx,
\eeqn  
so that our rotating star will have the same mass $M$ as the unperturbed Lane-Emden star.

Our method to work with the div-curl system is summarized in the following lemma.

\begin{lemma}\label{lem: div curl}
Let $\omega^2\in C^{2,\beta}(\overline{B_R})$ and suppose that $V\in C^{2,\beta}(\overline{B_R})$, $0<S\in C^{1,\beta}(\overline {B_R})$ solve \eqref{eq: curl}, \eqref{eq: div}, \eqref{eq: BC}.   Also suppose that $S=1$, $\omega=0$ in a neighborhood of $\partial B_R$.  Then $s={\gamma} \log S \in C^{1,\beta}(\overline{B_R}) $, $\rho=V_+^q S^{-1}   \in C^{1,\beta}(\overline{B_R})$ solve \eqref{eq: EP vector} and \eqref{eq: state} in the 
set $\{\rho>0\}$.
\end{lemma}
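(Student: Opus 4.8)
The plan is to reconstruct the vector equation \eqref{eq: EP vector} from its divergence \eqref{eq: div} and curl \eqref{eq: curl} by exploiting the boundary condition \eqref{eq: BC} on $\partial B_R$. Define the gravitational potential $U = \frac{1}{|\cdot|}*\frac{V_+^q}{S}$; note that by Remark \ref{lem: support} (applied once we know $V$ is close to $V^0$, which is guaranteed by the perturbative construction), the source $\frac{V_+^q}{S}$ is compactly supported in $B_R$, so $U$ is the standard Newtonian potential, $-\Delta U = 4\pi \frac{V_+^q}{S}$ in $\real^3$. Now introduce the vector field
\eqn
W := S\nb V - \nb U - \ka\om^2 r\mathbf e_r \label{eq: W def}
\eeqn
on $B_R$. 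The goal is to show $W\equiv 0$ on $B_R$, for then \eqref{eq: EP vector with V} holds throughout $B_R$, hence \eqref{eq: EP vector with V fluid domain} holds on $\{\rho>0\}$, and substituting back the definitions $S=e^{s/\g}$, $V=\rho^{\g-1}e^{(\g-1)s/\g}$ (equivalently $V_+^qS^{-1}=\rho$ where $\rho>0$) yields \eqref{eq: EP vector} and \eqref{eq: state} there.

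The first key step is to observe that $\nb\times W = 0$: the $\theta$-component of $\nb\times(S\nb V)$ is exactly $S_rV_z-S_zV_r$, which by \eqref{eq: curl} equals $-\ka(\om^2 r)_z$, and this cancels the $\theta$-component of $\nb\times(\ka\om^2 r\mathbf e_r)$; the other components vanish by axisymmetry, and $\nb\times\nb U=0$ automatically. Hence on the simply connected domain $B_R$ there is a scalar potential $\Phi\in C^{2,\beta}(\overline{B_R})$ with $W = \nb\Phi$. The second key step is to compute $\Delta\Phi = \nb\cdot W$. From \eqref{eq: W def}, $\nb\cdot W = \nb\cdot(S\nb V) + 4\pi\frac{V_+^q}{S} - \ka\nb\cdot(\om^2 r\mathbf e_r)$, which is precisely $0$ by \eqref{eq: div}. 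So $\Phi$ is harmonic in $B_R$. The third key step is the boundary condition: near $\partial B_R$ we are given $S\equiv 1$ and $\om\equiv 0$, so there $W = \nb V - \nb U = \nb(V - U)$; and \eqref{eq: BC} says $V - U = \alpha$, a constant, on $\partial B_R$. Therefore $\nb(V-U)$ is tangential-free on $\partial B_R$, i.e. $W\cdot\mathbf e_{\text{tangent}} = 0$ there, which forces $\Phi$ to be constant on $\partial B_R$. By the maximum principle (uniqueness for the Dirichlet problem for harmonic functions), $\Phi$ is constant on all of $B_R$, hence $W = \nb\Phi \equiv 0$, as desired.

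I expect the main subtlety to be bookkeeping about where $S\equiv 1$ and $\om\equiv 0$ actually holds and making sure the boundary argument is clean — specifically, confirming that \eqref{eq: BC} genuinely gives $V - U$ \emph{constant} on the whole sphere $\partial B_R$ (not merely locally constant), which uses connectedness of $\partial B_R$, and that the regularity $V\in C^{2,\beta}$, $S\in C^{1,\beta}$ suffices to make $W\in C^{1,\beta}$ and $\Phi\in C^{2,\beta}$ so that the classical harmonic-function uniqueness applies up to the boundary. A minor point worth stating carefully is that $U\in C^{1,\beta}$ (indeed better, since $\frac{V_+^q}{S}\in C^{0,\beta}$ with compact support gives $U\in C^{2,\beta}_{loc}$) so that $\nb U$ and $\nb(V-U)$ make pointwise sense on $\overline{B_R}$. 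Once $W\equiv 0$ is established, recovering \eqref{eq: EP vector} and \eqref{eq: state} on $\{\rho>0\}$ from the change of variables \eqref{eq: change of var} is purely algebraic, and the claimed regularity $s=\g\log S\in C^{1,\beta}$, $\rho = V_+^qS^{-1}\in C^{1,\beta}$ follows directly from the hypotheses together with $V_+^q\in C^{1,\beta}$ on the open set where $V>0$ (using $q = \frac{1}{\g-1}>1$ so that $V_+^q$ is $C^1$ across $\{V=0\}$ as well, though only the interior of $\{\rho>0\}$ is needed for the statement).
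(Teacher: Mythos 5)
Your proposal is correct and follows essentially the same argument as the paper: both define the residual field $W = S\nb V - \nb\left(\frac{1}{|\cdot|}*\frac{V_+^q}{S}\right) - \ka\om^2 r\mathbf e_r$, show it is curl-free by \eqref{eq: curl} and divergence-free by \eqref{eq: div}, and use the boundary condition \eqref{eq: BC} together with $S=1$, $\om=0$ near $\partial B_R$ to conclude the harmonic potential is constant on $\partial B_R$ and hence everywhere. The subtleties you flag (regularity of the potential, constancy on the connected sphere) are handled the same way in the paper's proof.
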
 
\begin{proof}
Let 
$$W = S\nabla V-\nabla \left(\frac{1}{|\cdot|}*\frac{V_+^q}S\right)-\kappa\omega^2 r\mathbf e_r$$
on $\overline{B_R}$.
We have
\begin{align}
\nabla\times W &= [S_rV_z-S_zV_r+\kappa(\omega^2 r)_z]\mathbf e_\theta.\label{eq: curl W}\\
\nabla\cdot W &= \nabla \cdot (S\nabla V)+4\pi V_+^{q}S^{-1}-\kappa\nabla\cdot(\omega^2 r \mathbf e_r).\label{eq: div W}
\end{align}
By \eqref{eq: curl W} and \eqref{eq: curl}, $W$ is a $C^{1,\beta} $
conservative vector field on $\overline{B_R}$. Thus there exists  $\psi\in C^{2,\beta}(\overline{B_R})$ such that $\nabla \psi = W$. By \eqref{eq: div W} and \eqref{eq: div}, $\Delta \psi = \nabla \cdot W = 0$. So $\psi$ is harmonic on $B_R$. Since $S=1$ and $\omega^2=0$ in a neighborhood of $\partial B_R$, 
\eqn\label{eq: nabla psi}
\nabla \psi = W = \nabla \left(V-\frac{1}{|\cdot|}*\frac{V_+^q}S\right)
\eeqn
in the same neighborhood. By \eqref{eq: nabla psi} and \eqref{eq: BC}, the tangential derivative of $\psi$ on $\partial B_R$ is zero. Thus $\psi$ is a constant on $\partial B_R$. It follows that $\psi$ is identically equal to a constant on $\overline{B_R}$. Thus $W=0$ on $B_R$. In other words, \eqref{eq: EP vector with V} holds on $B_R$. It is now straightforward to see that \eqref{eq: EP vector} and \eqref{eq: state} are equivalent to \eqref{eq: EP vector with V} under the given change of variables where $\rho>0$.
\end{proof}

Our problem is thus reduced to finding solutions of \eqref{eq: curl} and \eqref{eq: div} subject to the boundary data on the floor \eqref{eq: floor s}, \eqref{eq: BC} and the mass constraint \eqref{eq: mass} on the large ball $B_R$. The existence result for our reformulated div-curl system is stated as follows. 

\begin{theorem}\label{thm: main}
Let $s_0\in C^{1,\beta}(\overline{B_R}\cap \{z=0\})$ and $\omega^2\in C^{2,\beta}(\overline{B_R})$ be given. There exist $\epsilon>0$, $\epsilon_1>0$ such that if $|\kappa|+|\mu|<\epsilon_1$, then there exists a unique solution $(V^*,\alpha^*)\in C^{2,\beta}(\overline{B_R})\times \real$, and $S^*\in C^{1,\beta}(\overline{B_R})$ to \eqref{eq: curl}, \eqref{eq: div}, \eqref{eq: floor s}, \eqref{eq: BC}, \eqref{eq: mass}, with $\|(V^*,\alpha^*)-(V^0,\alpha^0)\|_{C^{2,\beta}(\overline{B_R})\times \real}\le\epsilon$. Furthermore, the solution has the properties that $V^*_+$ is supported on $B_{\frac{R_0+R}{2}}$, and $S^*=1$ outside $B_{R_1}$ for some fixed $R_1\in (R_0,R)$.
\end{theorem}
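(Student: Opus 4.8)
The plan is to set up an iteration that alternately solves the hyperbolic equation \eqref{eq: curl} for $S$ and the elliptic system \eqref{eq: div}, \eqref{eq: BC}, \eqref{eq: mass} for $(V,\alpha)$, and to show this iteration is a contraction in a suitable small ball around $(V^0,\alpha^0,1)$. The subtlety, flagged repeatedly in the introduction, is the loss of regularity: transporting along the characteristics of \eqref{eq: curl} with coefficients $V_z,V_r$ costs one derivative, so that if $V\in C^{2,\beta}$ we can only expect $S\in C^{1,\beta}$ (with $\nabla S$ estimated by second derivatives of $V$), and then \eqref{eq: div} with $S$ as a leading coefficient is only a $C^{1,\beta}$-coefficient elliptic equation, which by Schauder theory gives $V\in C^{2,\beta}$ again — so the regularity closes, but barely, and there is no room to run the implicit function theorem. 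Hence the explicit iteration and the two-norm estimates promised for Sections~4 and~5: a high norm ($C^{2,\beta}$ for $V$, $C^{1,\beta}$ for $S$) in which the iterates stay in a fixed bounded set, and a low norm ($C^{1,\beta}$ for $V$, $C^{0}$ or $C^{0,\beta}$ for $S$) in which the map is a contraction; interpolation then forces convergence in an intermediate norm.

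First I would fix the function spaces and the iteration map $\Phi$. Given $(V,\alpha)$ close to $(V^0,\alpha^0)$ in $C^{2,\beta}(\overline{B_R})\times\real$, solve the linear transport equation \eqref{eq: curl} for $S$ with the floor condition \eqref{eq: floor s} by integrating along characteristics of the planar vector field $(V_r,V_z)$ in the $(r,z)$ half-plane; since $V$ is close to the radial $V^0$ with $(V^0)'(\tau)/\tau<0$ near $0$ (Lemma~\ref{lem: LE}, eq.~\eqref{eq: V0 near 0}), the characteristics emanate transversally from the equatorial plane and foliate a neighborhood of $\overline{B_R}$, so $S$ is well-defined, positive, equals $1$ wherever $\omega$ and $s_0$ vanish (which by the cutoff is outside some $B_{R_1}$), and obeys $\|S-1\|_{C^{1,\beta}}\lesssim(|\mu|+|\kappa|)(1+\|V\|_{C^{2,\beta}})$. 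Here one must be careful that the $C^{1,\beta}$ norm of $S$ genuinely involves the $C^{2,\beta}$ norm of $V$ (differentiating the characteristic ODE), which is why $V$ must be carried at the higher regularity. Then, with this $S$ frozen as a coefficient, solve the elliptic problem: \eqref{eq: div} is $\nabla\cdot(S\nabla V)=-4\pi V_+^q S^{-1}+\kappa\nabla\cdot(\omega^2 r\mathbf e_r)$ with the nonlocal Robin-type boundary condition \eqref{eq: BC} on $\partial B_R$, coupled with the scalar unknown $\alpha$ pinned down by the mass constraint \eqref{eq: mass}. Linearizing the elliptic part around $V^0$ (where $S\equiv1$ and the equation is $\Delta V=-4\pi (V^0)_+^{q-1}_{\phantom{}}\cdot(\,\cdot\,)$ in a genuinely semilinear but non-degenerate way away from the mass-critical exponent $\gamma=4/3$), I would show the linearized operator together with the mass functional is invertible from $C^{2,\beta}\times\real$ to $C^{0,\beta}\times\real$ — this is where the hypotheses $\tfrac65<\gamma<2$, $\gamma\ne\tfrac43$ and the restriction on $\beta$ enter, the first ensuring $V_+^q$ is $C^{1,\beta}$ so the semilinear term is controllable, the second excluding the kernel coming from the scaling family of Lane–Emden solutions of identical mass. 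Standard Schauder estimates plus the invertibility of the linearization give, via a fixed-point inside the elliptic step (or a quantitative inverse function argument at fixed $S$), a unique $(V,\alpha)$ with $\|(V,\alpha)-(V^0,\alpha^0)\|_{C^{2,\beta}\times\real}\le\epsilon$. This defines $\Phi(V,\alpha)=(V,\alpha)$.

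Next I would prove $\Phi$ maps the closed $\epsilon$-ball in $C^{2,\beta}\times\real$ into itself for $|\kappa|+|\mu|<\epsilon_1$ small — both the transport step and the elliptic step contribute errors of size $O(\epsilon_1(1+\epsilon))$ plus the $O(\epsilon^2)$ from the semilinear term, so for $\epsilon$ then $\epsilon_1$ chosen small the ball is invariant. The contraction is then checked in the low norm: if $(V_1,\alpha_1),(V_2,\alpha_2)$ are two points of the ball producing $(S_1,V_1'),(S_2,V_2')$, then comparing characteristics gives $\|S_1-S_2\|_{C^{0,\beta}}\lesssim(|\mu|+|\kappa|)\|V_1-V_2\|_{C^{1,\beta}}$ (one fewer derivative than before, consistent with the low norm), and then the elliptic estimate for the difference — using that $V\mapsto V_+^q$ is Lipschitz $C^{1,\beta}\to C^{0,\beta}$ in this range of $\gamma$, and that the difference of the two coefficients $S_1,S_2$ is multiplied by the fixed regular $\nabla V^0$-size quantity — gives $\|V_1'-V_2'\|_{C^{1,\beta}}+|\alpha_1'-\alpha_2'|\le\tfrac12\|V_1-V_2\|_{C^{1,\beta}\times\real}$ once $\epsilon_1$ and $\epsilon$ are small. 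Hence $\Phi$ is a contraction on the $\epsilon$-ball of $C^{2,\beta}\times\real$ equipped with the $C^{1,\beta}\times\real$ metric; since that ball is complete in the low metric (it is convex and closed, and the high-norm bound is preserved under low-norm limits by weak-$*$/Arzelà–Ascoli compactness — $C^{2,\beta}$ bounded sets are precompact in $C^{2}$, a fortiori in $C^{1,\beta}$), there is a unique fixed point $(V^*,\alpha^*)$, which with $S^*$ from the transport step solves \eqref{eq: curl}, \eqref{eq: div}, \eqref{eq: floor s}, \eqref{eq: BC}, \eqref{eq: mass}. The support statement for $V^*_+$ is Remark~\ref{lem: support} applied with $\|V^*-V^0\|_{C^0}\le\epsilon<\delta$, and $S^*=1$ outside $B_{R_1}$ follows because the characteristics through points outside $B_{R_1}$ never enter the region where $\omega^2$ or $s_0$ is nonzero, so the transport integral vanishes there. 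The main obstacle, as stressed above, is the regularity bookkeeping at the interface of the two steps — guaranteeing that the derivative lost in transport is exactly the derivative gained by Schauder, and that the contraction can be run in a norm one notch below without breaking the a priori bound — together with establishing the invertibility of the linearized elliptic-plus-mass operator, which is where the exclusion of $\gamma=\tfrac43$ is essential.
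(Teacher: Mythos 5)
Your architecture --- uniform boundedness in the high norm $C^{2,\beta}(\overline{B_R})\times\real$, contraction in a lower norm, and completeness of the high-norm ball under the low-norm metric --- is exactly the paper's, and your identification of where $\gamma\ne\tfrac43$ enters (invertibility of the linearized elliptic-plus-mass operator, killed otherwise by the scaling family of Lane--Emden solutions) is also correct. You do organize the iteration differently: you propose to solve the full \emph{nonlinear} elliptic problem \eqref{eq: div}, \eqref{eq: BC}, \eqref{eq: mass} at each step with $S$ frozen (a nested fixed point), whereas the paper's map $\F$ solves only a \emph{linear} elliptic equation with the previous iterate's data on the right-hand side and then runs the Newton iteration $(V_{n+1},\alpha_{n+1})=(V_n,\alpha_n)-\Lambda^{-1}[(V_n,\alpha_n)-\F(V_n,\alpha_n,\kappa,\mu)]$ with the operator $\Lambda=I-D\F^0$ inverted once and for all at the Lane--Emden point. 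Your variant can be made to work, but it obliges you to verify two extra things: that the inner nonlinear solve is uniquely solvable for every admissible coefficient $S$ (a perturbation of the $S\equiv1$ invertibility), and --- crucially for the outer contraction --- that the inner solution map is Lipschitz in $S$ measured only at the level of $S$ itself and not of $\nabla S$, since the $S$-difference estimate loses a derivative relative to $V$. That forces you to put the coefficient difference into divergence form, $\nabla\cdot\bigl((S_1-S_2)\nabla V_2'\bigr)$, and invoke the $C^{1,\beta}$ Schauder estimate for divergence-form data, exactly as the paper does with the term $H_n$.

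The concrete gap is your claimed estimate $\|S_1-S_2\|_{C^{0,\beta}}\lesssim(|\mu|+|\kappa|)\|V_1-V_2\|_{C^{1,\beta}}$. The inverse characteristic map degenerates at the origin: since the flow speed satisfies $|\phi_t|\sim\tau$, one has $|\nabla t(r,z)|\sim(r^2+z^2)^{-1/2}$, so two nearby points at small radius $\rho$ can have parameter separation $|t_1-t_2|\sim|x_1-x_2|/\rho$, and the naive bound on $\int_{t_2}^{t_1}h_n\,dt'$ does not close to a plain H\"older seminorm. The paper proves only the weighted estimate $\|S_1-S_2\|_{C^{0,\beta}_{[1-\beta]}}\le C\|\nabla S_2\|_{0,\beta}\|\nabla(V_1-V_2)\|_{0,\beta}$ (Lemma \ref{lem: diff_S}), in which the H\"older quotient carries the factor $\min(|x|,|y|)$, and recovers an unweighted $C^{0,\beta}$ bound only for the \emph{product} $(S_1-S_2)\nabla V^\#$, using that $\nabla V^\#$ vanishes linearly at the origin (i.e.\ $\nabla V^\#\in C^{0,\beta}_{[-1]}$) together with the algebra of weighted H\"older spaces (Lemmas \ref{lem: equiv Holder} and \ref{lem: w Holder prod}). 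Without this weighted bookkeeping your contraction step is not justified as written; with it, the remainder of your argument --- self-mapping of the $\epsilon$-ball for $\epsilon_1\ll\epsilon\ll1$, contraction in $C^{1,\beta}\times\real$, lower semicontinuity of the $C^{2,\beta}$ norm in the limit, uniqueness by rerunning the low-norm estimate, and the support and $S^*\equiv1$ statements from the cutoff --- goes through as in the paper.
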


{\it Theorem \ref{thm: intro main} is a direct consequence of Lemma \ref{lem: div curl} and Theorem \ref{thm: main}, as we now show. }

\begin{proof}[Proof of Theorem \ref{thm: intro main}:] We may assume $B=B_R$ where $R>R_0$. By Theorem  \ref{thm: main}, we have a family of solutions $V_{\kappa,\mu}\in C^{2,\beta} (\overline{B_R})$ and $ S_{\kappa,\mu}\in C^{1,\beta}(\overline{B_R})$ which satisfy all the assumptions in Lemma \ref{lem: div curl}. Therefore $s_{\kappa,\mu}= \gamma \log S_{\kappa,\mu}\in C^{1,\beta}(\overline{B_R}) $ and $\rho_{\kappa,\mu}={V_{\kappa,\mu}^q}_+ S_{\kappa,\mu}^{-1} \in C^{1,\beta}(\overline{B_R})$ solve \eqref{eq: EP vector} and \eqref{eq: state} on $\{\rho>0\}$. Together with the velocity profile $\mathbf v= \sqrt{\kappa}~\omega(r,z) (-x_2,x_1,0)$, it is then easy to see that $(\rho_{\kappa,\mu},s_{\kappa,\mu})$ is a time independent solution of \eqref{eq: mass conserv}-\eqref{eq: state} and that $(\rho_{\kappa,\mu},s_{\kappa,\mu})$ satisfies all the properties in Theorem  \ref{thm: intro main}. 
\end{proof}

{\it The rest of the paper is devoted to the proof of Theorem \ref{thm: main}.} Before going any further, we outline our strategy as well as the difficulties in the proof of Theorem \ref{thm: main}.

\subsection{Iteration Scheme and Regularity Difficulties }    \label{iteration subsection} 
 Our solution to the div-curl system will be constructed as a fixed point of the mapping 
$\F=(\F_1,\F_2)$ defined as follows.  We write 
$$\F_1(V,\al, \ka,\mu)=V^\# \ \text{ and } \ \F_2(V,\al, \ka,\mu)=\al^\#, $$ 
where $V^\#$ is the solution to the linear elliptic equation
\eqn   \label{eq: tilde V elliptic}
\nabla\cdot\left(S\nabla  V^\#\right)=-4\pi V_+^{q}S^{-1}+\kappa \nabla\cdot(\omega^2 r \mathbf e_r)
\eeqn  
on $B_R$ subject to the Dirichlet boundary condition
\eqn\label{eq: tilde V BC}
 V^\#=\frac{1}{|\cdot|}*\frac{V_+^q}S+\alpha \quad 
\eeqn
on $\partial B_R$. Here $S=\mathscr S(V,\kappa,\mu)$ is the solution to the transport equation
\eqn\label{eq: transport}
S_r V_z-S_zV_r = -\kappa( \omega^2r)_z
\eeqn
with the equatorial boundary condition
\eqn\label{eq: IC}
S(r,0)=e^{\frac{\mu s_0(r)}{\gamma}}.
\eeqn
Finally, we define 
\eqn\label{eq: tilde alpha}
\alpha^\# = \alpha +\int V_+^qS^{-1}-M.
\eeqn
The Lane-Emden solution given in Lemma \ref{lem: LE} can now be formulated as $$\F(V^0,\alpha^0,0,0)=(V^0,\alpha^0).$$ 
Our goal  is to look for $(V,\alpha)$ close to $(V^0,\alpha^0)$ solving $\F(V,\alpha,\kappa,\mu)=(V,\alpha)$, for given small parameters $\kappa,\mu$.
We will work in the space $(V,\al)\in C^{2,\beta}(\overline{ B_R}) \times \real$ where $0< \beta<\min(q-1,1)$. 

Most of the difficulty of the problem lies in the subtle loss of regularity of the $V$-component of the mapping $\F$, which is a consequence of the loss of regularity in the $S$ transport equation \eqref{eq: transport}. As we will see in Section \ref{sec: S est}, since the coefficients of the $S$ transport terms in \eqref{eq: transport} involve first derivatives of $V$, $S$ will lose a derivative and belong only to $C^{1,\beta}(\overline{B_R})$ in general. Since at most one derivative of $S$ appears in \eqref{eq: tilde V elliptic}, we will be able to recover $V^\#$ in exactly the same space $C^{2,\beta}(\overline{B_R})$ using Schauder theory. In fact, this is one of the main reasons we chose $S$ and $V$ to be our good variables. By comparison, for instance, if we were to use $s$ and $\rho$ as variables, the corresponding elliptic equation would involve second derivatives of $s$ and could not be used to recover a solution in the same regularity space. 

However, even with the choice of our good variables, $\F$ still suffers a loss of regularity at the level of its Fr\'echet derivative.  Of course, the natural method to solve the perturbed fixed point problem is to apply the implicit function theorem. Such an approach requires the existence of the Fr\'echet differential in the same regularity space. If one then tries to vary $V$ by a variation $\delta V$, the corresponding variation $\delta S$ satisfies
$$
(\delta S)_rV_z-(\delta S)_zV_r= -(S+\delta S)_r(\delta V)_z+(S+\delta S)_z(\delta V)_r,
$$
which linearizes to 
\eqn\label{eq: F Frechet}
(\delta S)_rV_z-(\delta S)_zV_r= -S_r(\delta V)_z+S_z(\delta V)_r.
\eeqn
The right hand side of \eqref{eq: F Frechet} involves first derivatives of $S$, which in general belong only to $C^{0,\beta}(\overline{B_R})$. As a consequence, one can only obtain $C^{0,\beta}(\overline{B_R})$ estimates on $\delta S$.  
$\delta S$ will be smoother along the characteristics, but not so along the transverse direction.   This is not enough to recover the Fr\'echet differential of $\F$ in $C^{2,\beta}(\overline{B_R})$. This loss of derivative is an essential feature of the entropy transport equation and cannot be remedied by discovering any hidden structure in the problem. 
In fact, $\F$ does not even depend continuously on $V$ in the space $C^{2,\beta}(\overline{B_R})$ as soon as one shifts away from the special Lane-Emden solution $V^0$. The essential reason is that the $C^\beta$ H\"older norm is discontinuous with respect to smooth inner variations of a $C^\beta$ function, such as a horizontal shift: $f(x)\mapsto f(x+h)$.  As a result of \eqref{eq: transport}, the characteristics along which $S$ is transported are level curves of $V$. Thus a smooth variation of $V$ can cause these characteristics to be shifted horizontally, resulting in a discontinuous variation of $S$ in $C^{1,\beta}(\overline{B_R})$. Since the first derivative of $S$ appears in \eqref{eq: tilde V elliptic}, the discontinuity would be passed on to $V^\#$.

We overcome this difficulty by working directly with the standard Newton iteration scheme (as in the proof of the implicit function theorem). For simplicity, we denote by $D\F^0$ the Fr\'echet derivative of $\F$ with respect to $(V,\alpha)$ at $(V^0,\alpha^0,0,0)$.  
Recall that the entropy is constant ($S\equiv 1$) for the Lane-Emden solution. Thus the above mentioned loss of regularity on the Fr\'echet differential does not happen at this particular point in function space. We write the equation $(V,\alpha)=\F(V,\alpha,\kappa,\mu)$ as 
$$
(V,\alpha)-(V^0,\alpha^0)=\F(V,\alpha,\kappa,\mu)-\F(V^0,\alpha^0,0,0),
$$
and further as
\begin{align*}
&~(V-V^0,\alpha-\alpha^0)-D\F^0(V-V^0,\alpha-\alpha^0)\\
=&~\F(V,\alpha,\kappa,\mu)-\F(V^0,\alpha^0,0,0)-D\F^0(V-V^0,\alpha-\alpha^0).
\end{align*}
Inverting $I-D\F^0$, we obtain 
\begin{align}
&~(V-V^0,\alpha-\alpha^0)\notag\\
=&~(I-D\F^0)^{-1}[\F(V,\alpha,\kappa,\mu)-\F(V^0,\alpha^0,0,0)-D\F^0(V-V^0,\alpha-\alpha^0)]. \label{eq: fixed point 2}
\end{align}
As is explained above, we will not attempt to show that the right hand side of \eqref{eq: fixed point 2} is a contraction in  $C^{2,\beta}(\overline{B_R})\times\real$ (as is commonly done in the proof of the implicit function theorem). Instead, we produce a sequence $(V_n,\alpha_n)$ of approximate solutions by iterating the right hand side of \eqref{eq: fixed point 2}, and show that: 
	\begin{enumerate}[(i)]
\item The sequence is uniformly bounded and remains near $(V^0,\alpha^0)$ in the space $C^{2,\beta}(\overline{B_R})\times \real$.

\item $(V_{n+1}-V_n,\alpha_{n+1}-\alpha_n)$ contracts in the weaker space $C^{1,\beta}(\overline{B_R})\times \real$ as $n$ grows.
	\end{enumerate}
Note that the loss of derivative of $\delta S$ in \eqref{eq: F Frechet} described above can now be tolerated if the goal is to only get the weaker $C^{1,\beta}(\overline{B_R})$ estimates on $V_{n+1}-V_n$.
We will then combine the smooth norm estimates with the rough ones using interpolation of H\"older norms, and conclude that the iteration sequence converges in $C^{2,\beta'}(\overline{B_R})\times \real$ for some $0<\beta'<\beta$, which will be enough to get a solution to our problem.

The whole problem is of quasilinear type, because $S$ has the same regularity as $\nabla V$, which then appears as the coefficient of the second order elliptic term in \eqref{eq: tilde V elliptic}. Of course, first derivatives of $S$ also appear in \eqref{eq: tilde V elliptic}, which seem like fully nonlinear combinations of second derivatives of $V$. However, when the solutions are close to Lane-Emden, the first derivatives of $S$ are small in size, and can be absorbed by the main elliptic term. The method of combining estimates of smooth norms with rough norms may also prove useful in treating other quasilinear hyperbolic problems. 
%

\subsection{Outline of Paper}

The rest of the paper will be organized as follows. In Section \ref{sec: S est}, we obtain the $C^{1,\beta}(\overline{B_R})$ estimates on $S$ by solving \eqref{eq: transport} using the method of characteristics. Although the solution is easy to write down formally, one cannot automatically get the required estimates, since the characteristic vector field is degenerate near the origin.   In order to properly estimate $S$ near the origin, we must prove certain weighted H\"older estimates on the characteristic coordinates. This makes Section  \ref{sec: S est} rather technical. 
In Section \ref{sec: iteration}, we prove the invertibility of the linearized operator at the Lane-Emden solution and then construct the iteration map using this inverse. In Section \ref{sec: convergence}, we prove convergence of the iteration scheme in $C^{2,\beta'}(\overline{B_R})\times\real$ for any $0<\beta'<\beta$, by first showing uniform boundedness of the smooth norms, and then showing contraction of the rough norms of the iteration sequence. We also show eventually that the solution thus obtained actually lies in the smoother space $C^{2,\beta}(\overline{B_R})$, and is unique.

\section{Entropy Estimates}\label{sec: S est}

In this section, we derive the $C^{1,\beta}(\overline{B_R})$ estimates for $S$ via the method of characteristics. It is immediate from \eqref{eq: transport} that the characteristics are the level curves of $V$. A minor problem arises when one tries to solve \eqref{eq: transport} with boundary condition \eqref{eq: IC}. When $V$ is perturbed away from $V^0$, some of the characteristics near $\partial B_R$ may end on $\partial B_R$ and not on $\{z=0\}$. In this case, the boundary data \eqref{eq: IC} on the equatorial plane is not enough to uniquely determine $S$. To avoid such ambiguity in the definition of $S$, we solve \eqref{eq: transport} in a slightly larger ball.

First of all, as in Lemma \ref{lem: LE}, we regard $V^0$ as defined not only on $B_R$, but on the entire space $\real^3$. 
A function $V\in C^{2,\beta}(\overline{B_R})$ defined on $B_R$ can be extended  to $B_{R+1}$ in the following way. Let $$E: C^{2,\beta}(\overline{B_R})\to C^{2,\beta}(\overline{B_{R+1}})$$ be a bounded linear extension operator such that the support of $Eu$ is contained in $B_{R+\frac12}$ for all $u\in C^{2,\beta}(\overline{B_R})$. 
We  extend $V$ by $$V_{\text{ext}}=V^0+E(V-V^0).$$ Such an extended $V$ has the property that if $\|V-V^0\|_{C^{2,\beta}(\overline{B_R})}<\delta$, then $$ \|V_{\text{ext}}-V^0\|_{C^{2,\beta}(\overline{B_{R+1}})}<2\delta 
\ \text{ and } \ V_{\text{ext}}\equiv V^0\text{ on }\overline{B_{R+1}}\setminus B_{R+\frac12}.$$  
Slightly abusing the notation, we still denote $V_{\text{ext}}$ by $V$. As is explained in the discussion following Lemma \ref{lem: support}, $s_0$ and $\omega^2$ can be regarded as defined on $B_{R+1}$.
Then we  define $S$ to be the solution to \eqref{eq: transport} and \eqref{eq: IC} on $B_{R+1}$. Since $V=V^0$ outside $B_{R+\frac12}$, none of the characteristics will penetrate $\partial B_{R+1}$, and $S$ is uniquely defined.

We are now ready to state the main result of this section.

\begin{theorem}    \label{lem: V to S map}
Fix $\beta\in (0,1)$. Let $\omega^2$ and $s_0$ be prepared as above.  Assume $s_0\in C^{1,\beta}(\overline{B_{R+1}}\cap \{z=0\})$ and $\omega^2\in C^{2,\beta}(\overline{B_{R+1}})$. 

(i)  Then there exists  $\delta>0$ such that for every $V$ with $\|V-V^0\|_{C^{2,\beta}(\overline{B_R})}<\delta$, and extended to $B_{R+1}$ as explained above, the equations \eqref{eq: transport}, \eqref{eq: IC} admit a unique solution $S\in C^{1,\beta}(\overline{B_{R+1}})$. Moreover, there is a radius  $R_1\in (R^0, R)$ such that $S(r,z)\equiv 1$ for $|(r,z)|>R_1$. 

(ii) If $|\kappa|+|\mu|<1$, then there exists a constant $C>0$ such that the function $S$ 
satisfies 
\eqn \label{S near 1}
\|S-1\|_{C^{1,\beta}(\overline{B_{R+1}})}<C(|\kappa|+|\mu|).\eeqn
\end{theorem}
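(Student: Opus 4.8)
My plan is to solve the linear transport equation \eqref{eq: transport} with the floor condition \eqref{eq: IC} by the method of characteristics, and then to control quantitatively, with weights, how the degeneracy of the characteristic field at the origin affects the regularity of $S$. Rewriting \eqref{eq: transport} as $\nabla S\cdot(V_z,-V_r)=-\kappa(\omega^2r)_z$, the characteristics are the integral curves of $X=(V_z,-V_r)$ in the meridian half-plane $\{r\ge0\}$, i.e. the level curves of $V$. First I would record the geometry. Since $\nabla V(0,0)=0$ and, for $\|V-V^0\|_{C^{2,\beta}(\overline{B_R})}<\delta$ with $\delta$ small, $V$ has a nondegenerate maximum at the origin (by \eqref{eq: V0 near 0}) while $\partial_rV<0$ off the axis near the equator, the level curves of $V$ inside $\overline{B_{R_1}}$ are $C^{2,\beta}$-close to the half-circles $\{r^2+z^2=\lambda^2,\ r\ge0\}$. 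Each is a simple arc running from a point of the upper axis, through a unique point $(\lambda,0)$ of the equator, to the mirror point on the lower axis (the evenness of $V$ in $z$ forcing this mirror structure); in particular every such characteristic meets $\{z=0\}$ exactly once and transversally (there $V_z=0$, $V_r\ne0$), so \eqref{eq: IC} determines $S$ along it without ambiguity. Choosing $R_1$ just larger than the support radius of the cutoff applied to $\omega^2$ and $s_0$ and smaller than $R$, the characteristics through points with $|(r,z)|>R_1$ either are exact circles (where $V\equiv V^0$) or, for $\delta$ small, stay outside the cutoff support; along each of them both the source and the equatorial data vanish, so $S\equiv1$ there, which also prevents any characteristic from reaching $\partial B_{R+1}$ and yields the last assertion of (i).

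Next I would introduce characteristic (polar-type) coordinates $\Psi:(\lambda,\phi)\mapsto(r,z)$, with $\lambda$ labelling a characteristic by its equatorial intercept and $\phi$ the ordinary polar angle, i.e. $\Psi(\lambda,\phi)=(\rho(\lambda,\phi)\cos\phi,\rho(\lambda,\phi)\sin\phi)$ where $\rho$ solves $V(\rho\cos\phi,\rho\sin\phi)=V(\lambda,0)$, obtainable from the implicit function theorem because the radial derivative of $V$ stays bounded away from $0$. Then $\Psi$ is a $C^{2,\beta}$-small perturbation of the polar map $\Psi^0$, $\det D\Psi$ vanishes linearly in $\lambda$ exactly as for $\Psi^0$, and in the coordinates $(\lambda,\phi)$ equation \eqref{eq: transport} becomes the plain ODE
\[
\partial_\phi(S\circ\Psi)=-\kappa\,J(\lambda,\phi)\,[(\omega^2r)_z\circ\Psi],
\]
where $J$ is the factor converting $\phi$-derivatives into derivatives along $X$, equal to $\lambda/|(V^0)'(\lambda)|+O(\delta)$; crucially $J$ stays bounded and bounded away from $0$ near the origin, by \eqref{eq: V0 near 0}. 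Integrating from the equatorial value at $\phi=0$,
\[
S\circ\Psi(\lambda,\phi)=\exp\!\Big(\tfrac{\mu s_0(\lambda)}{\gamma}\Big)-\kappa\int_0^{\phi}J(\lambda,\phi')\,[(\omega^2r)_z\circ\Psi(\lambda,\phi')]\,d\phi',
\]
and composing with $\Psi^{-1}$ defines $S$ on $B_{R+1}$; it is the unique solution, since any solution must obey this ODE along characteristics with the prescribed floor data, and it matches the value $S\equiv1$ on the overlap with $\{|(r,z)|>R_1\}$. The $\kappa$-term is visibly $O(|\kappa|)$ and the leading term is $\exp(\mu s_0(\lambda)/\gamma)-1=O(|\mu|)$, which is the size information behind \eqref{S near 1}.

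It then remains to prove $S-1\in C^{1,\beta}(\overline{B_{R+1}})$ with norm $\le C(|\kappa|+|\mu|)$, giving the regularity in (i) together with (ii). Away from the origin this is routine: there $\Psi$ and $\Psi^{-1}$ are $C^{2,\beta}$ (implicit function theorem with $V\in C^{2,\beta}$), $s_0\in C^{1,\beta}$, $\omega^2\in C^{2,\beta}$, so both terms above lie in $C^{1,\beta}$ with norms $\lesssim|\kappa|+|\mu|$ by the chain and product rules, and a covering/compactness argument makes the constants uniform over $\|V-V^0\|_{C^{2,\beta}}<\delta$. I expect the main obstacle — and the reason this section is technical — to be uniform $C^{1,\beta}$ regularity \emph{through} the origin, where the polar coordinates $\Psi$, like $\Psi^0$, degenerate. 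To handle this I would establish weighted Hölder estimates for $\Psi$, $\Psi^{-1}$, $J$ and their first derivatives, the weights being powers of the distance to the origin and uniform over $\|V-V^0\|_{C^{2,\beta}}<\delta$, and combine them with the axisymmetry and evenness in $z$ of $S$ — precisely the symmetry that renders $h(\sqrt{r^2+z^2})$ a $C^{1,\beta}$ function of $(r,z)$ whenever $h$ is even and $C^{1,\beta}$ — so that the singular weights cancel and $S-1$ is genuinely $C^{1,\beta}$ up to and including the origin, with norm controlled by $|\kappa|+|\mu|$. The weighted estimates on this desingularizing change of variables, uniform in $V$, are the crux of the whole section.
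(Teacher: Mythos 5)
Your strategy coincides with the paper's: the characteristics are the level curves of $V$, labelled by their equatorial intercept; $S$ is given by an explicit integral along them; the cutoff on $\omega^2$ and $s_0$ forces $S\equiv 1$ outside $B_{R_1}$ and keeps characteristics away from $\partial B_{R+1}$; and the whole difficulty is the degeneracy of the characteristic coordinates at the origin, to be handled by weighted H\"older estimates. The only structural difference is in how the coordinates are built: you parametrize each level curve by the polar angle $\phi$ via the implicit function theorem, whereas the paper uses the flow time $t$ of the field $(V_z,-V_r)$ and proves that $(t,\tau)\mapsto(r,z)$ is a diffeomorphism by Gronwall comparisons with the exact polar picture for $V^0$. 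Both parametrizations lead to the same integral formula and the same weighted analysis, so this is a cosmetic variant rather than a different route.

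Two caveats. First, your justification of the implicit function theorem --- ``the radial derivative of $V$ stays bounded away from $0$'' --- is false at the origin: $\nabla V(0)=0$, and what is actually bounded away from zero is $\partial_\rho V(\rho\theta)/\rho$, by \eqref{eq: V0 near 0} together with $|\nabla(V-V^0)(x)|\le \|V-V^0\|_2\,|x|$. This is exactly the degeneracy you later flag, so the construction survives, but the uniformity of $D\Psi^{-1}$ blows up like $1/\rho$ and must be tracked quantitatively. Second, and more substantively, the weighted H\"older estimates on $\Psi$, $\Psi^{-1}$, $J$ and then on $\nabla S$ constitute essentially all of the paper's Section 3 (its estimates \eqref{eq: grad phi 1}--\eqref{eq: grad phi 6} and the term-by-term bound on $S_r$), and you only announce them. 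In particular, bounding the term $\mu e^{\mu s_0/\gamma}s_0'(\tau)\,\tau_r$ requires the vanishing $s_0'(0)=0$ (forced by axisymmetry) to place $s_0'(\tau(\cdot))$ in the weighted space $C^{0,\beta}_{(-\beta)}$, and the source term requires $(\omega^2 r)_z$ to vanish to second order at the origin; your appeal to ``the symmetry that renders $h(\sqrt{r^2+z^2})$ a $C^{1,\beta}$ function'' gestures at this but does not substitute for those computations, nor for the explicit verification that $S_r(0,z)=0$ and $S_z(r,0)=0$ so that the estimates extend across the axes to all of $B_{R+1}$. As a plan the proposal is sound and matches the paper; as a proof, the crux is left unexecuted.
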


In order to prove Theorem \ref{lem: V to S map}, 
we will of course construct the solution $S$ by the method of characteristics. The characteristic ODEs associated with the transport equation \eqref{eq: transport} are given by 
\begin{equation}\label{eq: ODE}
\begin{cases}
\frac{\partial r}{\partial t}(t,\tau)=V_z(r(t,\tau),z(t,\tau)), \\
\frac{\partial z}{\partial t}(t,\tau)=-V_r(r(t,\tau),z(t,\tau)),\\
r(0,\tau)= \tau,~z(0,\tau)=0.
\end{cases}
\end{equation} 
We are using $t$ as a parameter along the characteristic curves  and $\tau$ as a label of the individual curves.  ($t$ has nothing to do with time.) 
These characteristic equations define a mapping $\phi : (t,\tau)\to (r,z)$.  

The remainder of Section \ref{sec: S est} constitutes the proof of Theorem \ref{lem: V to S map}. It involves technical estimates of $S$ by using the characteristic coordinates $(t,\tau)$. They may be skipped on first reading if the reader is only interested in the main existence proof. 

\subsection{Characteristic Coordinates}\label{sec: char coord}

We now construct the characteristic coordinates on 
\eqn
\Omega = B_{R+1}\cap \{r>0,z>0\}, 
\eeqn
which amounts to the construction of the diffeomorphism which maps a proper open set in $(t,\tau)$ coordinates onto $\Omega$. We will consistently use $\|\cdot\|_k$ and $\|\cdot\|_{k,\beta}$ to denote the $C^{k}(\overline{B_R})$ and $C^{k,\beta}(\overline{B_R})$ norms.
We begin by constructing the mapping $\phi$ on a precise set.

\begin{prop}\label{lem: char coord}
There exists $\delta>0$ sufficiently small such that, for any function $V$ for which $\|V-V^0\|_2<\delta$, there exists an open subset $O\subset \{(t,\tau)~|~t>0,0<\tau<R+1\}$ and a $C^1$ diffeomorphism $\tilde{\phi}:O\to \Omega$ such that $(r(t,\tau),z(t,\tau))=\tilde\phi(t,\tau)$ extends to a $C^1$ map on $\overline O\setminus \{\tau=0\}$, and solves \eqref{eq: ODE}. 
\end{prop}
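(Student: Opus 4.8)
The plan is to construct the characteristic map $\phi(t,\tau) = (r(t,\tau),z(t,\tau))$ by solving the ODE system \eqref{eq: ODE} and then to verify that, after restriction to a suitable open set $O$ in $(t,\tau)$ space, it is a $C^1$ diffeomorphism onto $\Omega$. First I would observe that for the unperturbed Lane-Emden field $V^0$, which is radial and strictly decreasing, the characteristic curves are exactly the circles $|(r,z)| = \text{const}$, traversed at an angular speed governed by $(V^0)'(\tau)/\tau$; by \eqref{eq: V0 near 0} this quantity has a strictly negative limit as $\tau\to 0^+$, so the vector field $(V_z, -V_r)$, although it vanishes at the origin, is a genuine rotation field with nondegenerate ``angular velocity'' there. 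This is the structural fact that makes the construction possible, and I would record it first as the $\delta = 0$ model case.

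Next, for general $V$ with $\|V-V^0\|_2 < \delta$, I would apply the standard existence-uniqueness theory for ODEs with $C^1$ right-hand side to get, for each fixed $\tau \in (0, R+1)$, a unique maximal solution $t \mapsto (r(t,\tau), z(t,\tau))$ with $(r(0,\tau),z(0,\tau)) = (\tau, 0)$, and smooth dependence on the initial label $\tau$ (so $\phi$ is $C^1$ jointly). I would define $O$ to be the set of $(t,\tau)$ with $\tau \in (0,R+1)$ and $t$ ranging over the sub-interval of the maximal existence interval for which the solution stays inside $\Omega = B_{R+1} \cap \{r>0, z>0\}$; this is open by continuous dependence. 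The key closed-form check is that $V$ is constant along each characteristic (immediate from \eqref{eq: ODE}), hence each level curve of $V$ inside $\Omega$ is covered; combined with the fact that $\partial_t \tilde\phi$ is nonvanishing on $O$ (the vector field has no zeros away from the origin when $\delta$ is small, since it is a $C^1$-small perturbation of a field whose only zero is the origin), this shows $\tilde\phi$ is a local diffeomorphism. Global injectivity on $O$ I would get by a winding/monotonicity argument: near the origin the angular coordinate is strictly monotone in $t$ (by the $\delta = 0$ computation plus smallness), so no characteristic can close up or self-intersect inside $\Omega$, and distinct level sets $\{V = c\}$ are disjoint; surjectivity onto $\Omega$ follows because every point of $\Omega$ lies on the level curve through it, and that curve, followed backward in $t$, must exit $\Omega$ either through $\{z=0\}$ (giving a label $\tau$) — one has to rule out exit through $\{r=0\}$ or through $\partial B_{R+1}$, the latter being excluded since $V = V^0$ outside $B_{R+1/2}$ so those characteristics are exactly circles, and the former by the nondegenerate rotation near the origin. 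Finally I would check the boundary behavior: on $\overline O \setminus \{\tau = 0\}$ the flow map extends continuously (and $C^1$, away from the degenerate label $\tau = 0$) because the vector field is $C^1$ up to $\overline\Omega$ minus the origin.

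The main obstacle I expect is the behavior near the origin, i.e.\ near $\{\tau = 0\}$, where the vector field $(V_z, -V_r)$ degenerates to zero. There the flow is not uniformly transverse and a naive implicit-function-theorem argument for the diffeomorphism property breaks down; one must instead exploit \eqref{eq: V0 near 0} to see that in polar-type coordinates centered at the origin the angular variable still evolves at a bounded, bounded-away-from-zero rate, so that the map $(t,\tau) \mapsto (r,z)$ remains a diffeomorphism onto a punctured neighborhood and degenerates only in the limit $\tau \to 0$. This is precisely why the proposition claims $\tilde\phi$ is a diffeomorphism on $O \subset \{t > 0, 0 < \tau < R+1\}$ and merely extends $C^1$ to $\overline O \setminus \{\tau = 0\}$: the label $\tau = 0$ is genuinely singular. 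I would handle it by rescaling — introducing $\tau$ as a radial-like parameter and showing $(r,z)/\tau$ stays comparable to a rotation of the unit circle for small $\tau$ — deferring the sharp quantitative weighted Hölder control of this rescaling to the subsequent subsections, where (as the text promises) it is needed to estimate $S$ itself.
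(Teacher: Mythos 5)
Your overall strategy is the same as the paper's: write down the explicit circular characteristics $\phi^0$ of the radial Lane--Emden field, treat the general case as a $C^2$-small perturbation via ODE theory, and then establish injectivity, surjectivity onto $\Omega$, and nondegeneracy, with the singular label $\tau=0$ isolated exactly as in the statement. Your injectivity argument (distinct labels lie on distinct level sets of $V$ — which requires noting that $V(\cdot,0)$ is strictly decreasing, i.e.\ $-V_r(\cdot,0)>0$ as in \eqref{eq: V_r<0} — and, for a fixed label, strict monotonicity of the polar angle in $t$) and your surjectivity argument (follow the level curve backward until it meets $\{z=0\}$, excluding escape through $\partial B_{R+1}$ by the cutoff and through $\{r=0\}$ by the angular monotonicity) are sound and closely parallel the paper's Steps 1 and 2; the paper makes the ``nondegenerate rotation near the origin'' quantitative through the Gronwall estimate $|\phi(t,\tau)-\phi^0(t,\tau)|\le C\|V-V^0\|_2\, t\tau$ of \eqref{eq: char coord deviation}, which is the tool you would need to turn your rescaling remarks into an actual proof.

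The one step that does not hold as written is your justification of the local diffeomorphism property. Nonvanishing of $\partial_t\tilde\phi$ on $O$ says nothing about $\det D\phi=r_tz_\tau-r_\tau z_t$: one also needs $\phi_\tau$ to be transverse to $\phi_t$, and supplying this is exactly the content of Step 3 of the paper's proof. The fix uses precisely the observation you already made, namely that $V$ is constant along characteristics: since $V(\phi(t,\tau))=V(\tau,0)$ for all $t$, differentiating in $\tau$ gives
$$ r_tz_\tau-r_\tau z_t=V_z(\phi)z_\tau+V_r(\phi)r_\tau=\frac{d}{d\tau}V(\phi(t,\tau))=V_r(\tau,0), $$
which is strictly negative for $0<\tau<R+1$ once $\|V-V^0\|_2$ is small, by \eqref{eq: V_r<0} (resting ultimately on \eqref{eq: V0 near 0}). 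Equivalently, the field $(V_z,-V_r)$ is divergence-free, so the Jacobian is independent of $t$ and equals its value at $t=0$, where transversality of the (vertical) characteristic direction to the initial segment $\{z=0\}$ is clear. Without this identity, or some substitute for it, the claim that $\tilde\phi$ is a local diffeomorphism is unsupported, and the global bijectivity argument that follows has nothing to stand on.
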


We will prove this proposition by means of a few lemmas. We first write down the trivial coordinates $(r^0,z^0)$ induced by the nonrotating $V^0$. 

\begin{lemma}\label{lem: phi0} Let $(r^0(t,\tau),z^0(t,\tau))=\phi^0(t,\tau)$ be the solution to  \eqref{eq: ODE} for the function $V=V^0(\sqrt{r^2+z^2})$. Then 
\eqn\label{eq: phi 0}
(r^0(t,\tau),z^0(t,\tau)) = \tau \left(\cos \left(-\frac{(V^0)'(\tau) t}{\tau}\right),\sin\left( -\frac{(V^0)'(\tau) t}{\tau}\right)\right)
\eeqn
with its corresponding domain $$O^0=\bigg\{(t,\tau), 0<\tau<R+1, 0<t<\frac{\pi  \tau}{-2(V^0)'(\tau)}\bigg\}.$$
In particular, $|\phi^0(t,\tau)|=\tau$. 
\end{lemma}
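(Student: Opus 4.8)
\textbf{Proof proposal for Lemma \ref{lem: phi0}.}

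The plan is to solve the characteristic ODE system \eqref{eq: ODE} explicitly in the special case $V = V^0$, exploiting the radial symmetry of $V^0$. First I would observe that since $V^0$ depends only on $\tau := |(r,z)|$, we have $\nabla V^0 = (V^0)'(|(r,z)|) \cdot \frac{(r,z)}{|(r,z)|}$, so the right-hand side of \eqref{eq: ODE}, which is $(V_z, -V_r)$, becomes $\frac{(V^0)'(|(r,z)|)}{|(r,z)|}(z, -r)$. This is a rotation field: it is orthogonal to the position vector $(r,z)$, so along any characteristic the quantity $r^2 + z^2$ is conserved. Hence $|\phi^0(t,\tau)| = \tau$ for all $t$, which pins each characteristic to the circle of radius $\tau$ and reduces the system to a scalar ODE for the polar angle.

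Next I would introduce the angle $\theta(t)$ by writing $(r^0, z^0) = \tau(\cos\theta, \sin\theta)$ with $\theta(0) = 0$ (consistent with the initial condition $r(0,\tau) = \tau$, $z(0,\tau) = 0$). Substituting into \eqref{eq: ODE} and using $|(r^0, z^0)| = \tau$, the system collapses to $\dot\theta = -(V^0)'(\tau)/\tau$, a constant in $t$ (depending only on the label $\tau$). Integrating gives $\theta(t) = -\frac{(V^0)'(\tau)}{\tau} t$, which is exactly \eqref{eq: phi 0}. Since $V^0$ is strictly decreasing (Lemma \ref{lem: LE}), $(V^0)'(\tau) < 0$ for $\tau \in (0, R+1)$, so $-(V^0)'(\tau)/\tau > 0$ and $\theta$ increases in $t$; the characteristic traces the circle counterclockwise starting from the positive $r$-axis. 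To obtain the domain $O^0$, I would note that $\Omega = B_{R+1} \cap \{r > 0, z > 0\}$ corresponds precisely to $0 < \tau < R+1$ and $\theta \in (0, \pi/2)$, i.e.\ $0 < t < \frac{\pi\tau}{-2(V^0)'(\tau)}$, giving the stated formula for $O^0$.

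There is essentially no hard obstacle here — the lemma is a direct computation made transparent by radial symmetry. The only point requiring a small amount of care is the behavior near $\tau = 0$: the factor $(V^0)'(\tau)/\tau$ must be shown to have a finite nonzero limit, which is exactly the content of \eqref{eq: V0 near 0} in Lemma \ref{lem: LE}, so the angular speed stays bounded and bounded away from zero as $\tau \to 0^+$. This ensures $\phi^0$ extends continuously (indeed $C^1$) up to the boundary segments of $\overline{O^0}$ away from $\{\tau = 0\}$, consistent with the regularity claims in Proposition \ref{lem: char coord}. I would also remark that $\phi^0$ is a $C^1$ diffeomorphism from $O^0$ onto $\Omega$ because $(\tau, \theta) \mapsto \tau(\cos\theta, \sin\theta)$ is the standard polar-coordinate diffeomorphism and $t \mapsto \theta$ is an affine bijection for each fixed $\tau$.
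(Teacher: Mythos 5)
Your proposal is correct and follows essentially the same route as the paper: conservation of $r^2+z^2$ along characteristics by radial symmetry, reduction to the scalar angle equation $\dot\theta=-(V^0)'(\tau)/\tau$, explicit integration, and identification of $O^0$ from the condition $\theta\in(0,\pi/2)$. The extra remarks on the limit \eqref{eq: V0 near 0} near $\tau=0$ and on the diffeomorphism property are fine but not needed for this lemma (they are used later, in Proposition \ref{lem: char coord}).
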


\begin{proof} Since $V^0(r,z)=V^0(\sqrt{r^2+z^2})$, 
we can easily check that \eqref{eq: ODE} leads to $\frac{\partial}{\partial t} \left[(r^0)^2 + (z^0)^2\right]=0$, which in turn implies $(r^0)^2 + (z^0)^2 = \tau^2$. Hence $r^0(t,\tau)=\tau \cos \theta(t)$ and $z^0(t,\tau)= \tau \sin \theta(t)$ for some $\theta(t)$. The first equation of \eqref{eq: ODE} gives $\frac{d\theta}{dt}= - \frac{(V^0)'(\tau)}{\tau}$. Since $r^0(0,\tau)=\tau$, $\theta(0)=0$. Thus $\theta(t)= - \frac{(V^0)'(\tau)}{\tau} t$. 
In order to cover the first quadrant in $r^0,z^0$ coordinates, we want $0<\theta(t)<\frac{\pi}{2}$. This verifies the domain $O^0$ specified above. The last assertion trivially follows from $(r^0)^2 + (z^0)^2 = \tau^2$. 
\end{proof}



For general $V$, the domain will be distorted from $O^0$.  For convenience, we will actually construct the map $\phi$ to a slightly extended domain $O^a$ 
such that $\phi(t,\tau)=(r(t,\tau),z(t,\tau))$ solves \eqref{eq: ODE}. To this end, we define 
 $$O^a =\bigg\{(t,\tau), 0<\tau<R+1, -\frac{\pi}{4a}<t<\frac{\pi  \tau}{-2(V^0)'(\tau)}+\frac{\pi}{4a}\bigg\}$$
where $$a = \sup_{0<\tau<R+1}\frac{-(V^0)'(\tau)}{\tau}>0$$
by Lemma \ref{lem: LE}.

\begin{lemma}\label{lem: phi LM separation}
Define $\phi$ to be the solution to \eqref{eq: ODE}. Then $\phi$ is a $C^1$ map on $O^a$, with a range avoiding the third quadrant, and there exists a constant $C>0$ such that for $(t,\tau), (t_1,\tau), (t_2, \tau), (t,\tau_1),(t,\tau_2)\in O^a$, 
\eqn\label{eq: char coord deviation}
|\phi(t,\tau)-\phi^0(t,\tau)|\le C\|V-V^0\|_2 t\tau,
\eeqn
\begin{align}\label{eq: char coord separation}
|\phi(t_1,\tau)-\phi^0(t_1,\tau)-\phi(t_2,\tau)&+\phi^0(t_2,\tau)|\notag\\
&\le C\|V-V^0\|_2|\phi^0(t_1,\tau)-\phi^0(t_2,\tau)|,
\end{align}
\begin{align}\label{eq: char coord tau separation}
|\phi(t,\tau_1)-\phi^0(t,\tau_1)-\phi(t,\tau_2)&+\phi^0(t,\tau_2)|\notag\\
&\le C\|V-V^0\|_{2}|\phi^0(t,\tau_1)-\phi^0(t,\tau_2)|.
\end{align}
\end{lemma}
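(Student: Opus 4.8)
The plan is to compare $\phi$ with the explicit map $\phi^0$ of Lemma \ref{lem: phi0} throughout. I would set $w=\phi-\phi^0$ and $U=V-V^0$, and write the characteristic vector field as $F=(V_z,-V_r)$, so that the one attached to $V^0$ is $F^0=(V^0_z,-V^0_r)$ and $F-F^0=(U_z,-U_r)$. Subtracting the two systems in \eqref{eq: ODE} and using $w(0,\tau)=0$ gives
\[
w(t,\tau)=\int_0^t\Big(\big[F-F^0\big]\big(\phi(s,\tau)\big)+\big[F^0(\phi(s,\tau))-F^0(\phi^0(s,\tau))\big]\Big)\,ds .
\]
Since $V\in C^{2,\beta}$ on $\overline{B_{R+1}}$ after extension, $F$ is Lipschitz, so $\phi$ is the unique $C^1$ solution of \eqref{eq: ODE} on its maximal interval of existence inside $\overline{B_{R+1}}$; that this interval contains the $t$-interval of $O^a$ will follow by a standard continuation bootstrap from \eqref{eq: char coord deviation} (for $\tau\le R+\tfrac12$ that bound keeps $|\phi(t,\tau)|\le\tau+C\delta|t|\tau<R+1$ when $\delta$ is small, while for $\tau>R+\tfrac12$ the circle $|x|=\tau$ lies where $V\equiv V^0$, forcing $\phi\equiv\phi^0$ there by uniqueness). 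I will use repeatedly that, by Lemma \ref{lem: LE} together with $V^0=M|\cdot|^{-1}+\alpha^0$ on $|x|>R^0$, one has $0<c_0\le -(V^0)'(\tau)/\tau\le a$ for $\tau\in(0,R+1)$; this bounds the $t$-interval of $O^a$ by a fixed $T_0$, and yields the two elementary comparisons that $|\phi^0(t_1,\tau)-\phi^0(t_2,\tau)|$ is comparable to $\tau|t_1-t_2|$ (from the formula in Lemma \ref{lem: phi0}) and that $|\tau_1-\tau_2|\le|\phi^0(t,\tau_1)-\phi^0(t,\tau_2)|\le C|\tau_1-\tau_2|$ (the lower bound since $|\phi^0(t,\tau)|=\tau$, the upper since $|\partial_\tau\phi^0|$ is bounded).

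For the deviation bound \eqref{eq: char coord deviation}: since $U$ is axisymmetric and even in $z$, $\nabla U(0,0,0)=0$, hence $|\nabla U(x)|\le C\|V-V^0\|_2\,|x|$; using $|\phi(s,\tau)|\le\tau+|w(s,\tau)|$ this gives $|[F-F^0](\phi(s,\tau))|\le C\|V-V^0\|_2(\tau+|w(s,\tau)|)$, and together with $|F^0(\phi(s,\tau))-F^0(\phi^0(s,\tau))|\le C|w(s,\tau)|$, Gr\"onwall's inequality on $[0,|t|]\subset[0,T_0]$ closes the estimate. The claim that the range of $\phi$ avoids the third quadrant then follows at once: by Lemma \ref{lem: phi0} and $-(V^0)'(\tau)/\tau\le a$, the argument of $\phi^0(t,\tau)$ lies in $\left(-\tfrac{\pi}{4},\tfrac{3\pi}{4}\right)$ and $|\phi^0(t,\tau)|=\tau>0$, and \eqref{eq: char coord deviation} perturbs $\phi^0$ by $O(\delta)$. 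For the $t$-separation bound \eqref{eq: char coord separation} I would simply integrate along the characteristic: $w(t_1,\tau)-w(t_2,\tau)=\int_{t_2}^{t_1}\partial_t w(s,\tau)\,ds$, where $|\partial_t w(s,\tau)|\le C\|V-V^0\|_2\,\tau$ on $O^a$ by the bounds just used together with \eqref{eq: char coord deviation}; hence $|w(t_1,\tau)-w(t_2,\tau)|\le C\|V-V^0\|_2\,\tau|t_1-t_2|$, which is the asserted bound by the first comparison above.

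I expect the $\tau$-separation bound \eqref{eq: char coord tau separation} to be the main obstacle. The tempting route of differentiating the flow in $\tau$ produces a variational equation whose inhomogeneity involves $DF(\phi)-DF(\phi^0)$, a difference of second derivatives of $V$ that cannot be controlled by $\|V-V^0\|_2|w|$ unless $V\in C^3$, which is not assumed. The remedy is to regroup the $\tau$-difference so that every step requiring a Lipschitz estimate of a second derivative falls on $V^0$ alone, which is of class $C^{3,\beta_0}$ by Lemma \ref{lem: LE}. Writing $\Delta g:=g(\cdot,\tau_1)-g(\cdot,\tau_2)$, from $w(0,\cdot)=0$ one has $\Delta w(t)=\int_0^t\partial_t\Delta w(s)\,ds$ with
\[
\partial_t\Delta w(s)=\Delta\big([F-F^0](\phi(s,\cdot))\big)+M(s,\tau_1)\,\Delta w(s)+\big[M(s,\tau_1)-M(s,\tau_2)\big]\,w(s,\tau_2),
\]
where $M(s,\tau)=\int_0^1 DF^0\big(\phi^0(s,\tau)+\sigma w(s,\tau)\big)\,d\sigma$, and all points occurring here lie on segments joining points of $\overline{B_{R+1}}$, hence stay in $\overline{B_{R+1}}$ by convexity. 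I would bound the first term by $C\|V-V^0\|_2\big(|\phi^0(s,\tau_1)-\phi^0(s,\tau_2)|+|\Delta w(s)|\big)$ (using $\nabla U(0)=0$ as above), use $|M|\le C$, and bound $|M(s,\tau_1)-M(s,\tau_2)|\le C\big(|\phi^0(s,\tau_1)-\phi^0(s,\tau_2)|+|\Delta w(s)|\big)$ by the Lipschitz bound on $DF^0$ --- this is the only place $V^0\in C^3$ is needed --- which, combined with $|w(s,\tau_2)|\le C\|V-V^0\|_2$ from \eqref{eq: char coord deviation}, handles the last term. Collecting, $|\partial_t\Delta w(s)|\le C|\Delta w(s)|+C\|V-V^0\|_2|\tau_1-\tau_2|$, and Gr\"onwall on $[0,|t|]\subset[0,T_0]$ gives $|\Delta w(t)|\le C\|V-V^0\|_2|\tau_1-\tau_2|\le C\|V-V^0\|_2|\phi^0(t,\tau_1)-\phi^0(t,\tau_2)|$, which is \eqref{eq: char coord tau separation}.
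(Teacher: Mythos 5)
Your proposal is correct and its core is the same as the paper's: compare $\phi$ with the explicit $\phi^0$ of Lemma \ref{lem: phi0}, split the difference of characteristic vector fields so that $\nabla(V-V^0)$ is estimated through $\nabla(V-V^0)(0,0)=0$ (producing the crucial factor $\tau$), and close with Gronwall; for \eqref{eq: char coord deviation} the two arguments are essentially identical up to where the splitting is evaluated. For \eqref{eq: char coord separation} you shortcut the paper's second Gronwall by noting that $|\partial_t(\phi-\phi^0)(s,\tau)|\le C\|V-V^0\|_2\,\tau$ already follows from \eqref{eq: char coord deviation}, and then integrating in $t$; both routes land on the same intermediate bound $C\|V-V^0\|_2\,\tau|t_1-t_2|$ before invoking the comparability of $\tau|t_1-t_2|$ with $|\phi^0(t_1,\tau)-\phi^0(t_2,\tau)|$ on $O^a$. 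The only genuine divergence is in \eqref{eq: char coord tau separation}: the paper differentiates the flow in $\tau$ and compares the variational equations \eqref{eq: var ODE} for $\phi_\tau$ and $\phi^0_\tau$, arranging, exactly as you do, that the one place a Lipschitz bound on a second derivative is needed falls on $V^0\in C^{3,\beta_0}$ rather than on $V$; you instead take finite differences in $\tau$ and route the same observation through the mean-value matrix $M$. Both are sound. The paper's version has the side benefit of the pointwise estimate $|\phi_\tau-\phi^0_\tau|\le C\|V-V^0\|_2$, see \eqref{eq: phi_tau}, which is reused later in proving \eqref{eq: grad phi 1} and \eqref{eq: grad phi 4}, whereas your finite-difference form yields only the integrated statement; for the present lemma that is all that is required. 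Your added justifications of solvability on all of $O^a$ and of the avoidance of the third quadrant (which the paper defers to the proof of Proposition \ref{lem: char coord}) are welcome.
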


\begin{remark}\label{rmk: char coord 1}
Lemma \ref{lem: phi LM separation} concerns solutions to \eqref{eq: ODE} starting at $(\tau, 0)$, but similar estimates hold for solutions starting at a general point $(r,z)$ in $O^a$.
\end{remark}
\begin{remark}
An obvious consequence of \eqref{eq: char coord deviation} is that
\eqn\label{eq: phi close to tau}
\frac1{1+C\delta}\tau\le |\phi(t,\tau)|\le (1+C\delta)\tau
\eeqn
when $\|V-V^0\|_{2}<\delta$.
 \end{remark}
 
\begin{proof}
We begin by noting that the solution to \eqref{eq: ODE} is defined for all $t$ if $0<\tau< R+1$, because the characteristics are confined to a compact region $\overline{B_{R+1}}$. $\phi$ is a $C^1$ map as $\nabla V\in C^1$. We denote $\phi^0=(r^0,z^0)$. 

\noindent{\bf Proof of \eqref{eq: char coord deviation}.} Writing 
\begin{align*}
\frac{\partial r}{\partial t}(t,\tau)-\frac{\partial{r^0}}{\partial t}(t,\tau)&=V_z(r,z)-(V^0)_z(r^0,z^0)\\
&=V_z(r,z)-V_z(r^0,z^0)+(V-V^0)_z(r^0,z^0),
\end{align*}
\begin{align*}
\frac{\partial z}{\partial t}(t,\tau)-\frac{\partial{z^0}}{\partial t}(t,\tau)&=V_r(r,z)-(V^0)_r(r^0,z^0)\\
&=V_r(r,z)-V_r(r^0,z^0)+(V-V^0)_r(r^0,z^0),
\end{align*}
and $\nabla (V-V^0)(r^0,z^0)=\nabla (V-V^0)(r^0,z^0)-\nabla (V-V^0)(0,0)$,
we can integrate and estimate
\begin{align}
|\phi(t,\tau)-\phi^0(t,\tau)|&\le 2\|V\|_{2}\int_0^t |\phi(s,\tau)-\phi^0(s,\tau)|~dt\notag\\
&\qquad +2\|V-V^0\|_{2}\int_0^t|\phi^0(s,\tau)|~ds.
\end{align}
By Gronwall's inequality, this implies
\begin{align}
|\phi(t,\tau)-\phi^0(t,\tau)|\le 2e^{2\|V\|_{2}t}\|V-V^0\|_{2}\tau t \le C\|V-V^0\|_{2}\tau t.
\end{align}
Here we have used $|\phi^0(s,\tau)|=\tau$ as shown in Lemma \ref{lem: phi0}. 

\noindent{\bf Proof of \eqref{eq: char coord separation}.}  We write
\begin{align*}
&~\frac{\partial}{\partial t}\left(r(t,\tau)-r^0(t,\tau)-r(t_1,\tau)+r^0(t_1,\tau)\right)\\
=&~V_z(r,z)-(V^0)_z(r^0,z^0)\\
=&~V_z(r,z)-V_z(r^0,z^0)+(V-V^0)_z(r^0,z^0),
\end{align*}
\begin{align*}
&~\frac{\partial}{\partial t}\left(z(t,\tau)-z^0(t,\tau)-z(t_1,\tau)+z^0(t_1,\tau)\right)\\
=&~V_r(r,z)-(V^0)_r(r^0,z^0)\\
=&~V_r(r,z)-V_r(r^0,z^0)+(V-V^0)_r(r^0,z^0),
\end{align*}
and use
\begin{align*}
|\nabla V(r,z)-\nabla V(r^0,z^0)|&\le \|V\|_{2}|\phi(t,\tau)-\phi^0(t,\tau)|\\
&\le \|V\|_{2}|\phi(t,\tau)-\phi^0(t,\tau)-\phi(t_1,\tau)+\phi^0(t_1,\tau)|\\
&\quad +\|V\|_{2}|\phi(t_1,\tau)-\phi^0(t_1,\tau)|\\
&\le \|V\|_{2}|\phi(t,\tau)-\phi^0(t,\tau)-\phi(t_1,\tau)+\phi^0(t_1,\tau)|\\
&\quad +C\|V-V^0\|_{2}\tau.
\end{align*}
Here we have used \eqref{eq: char coord deviation} on the last term. As a consequence,
\begin{align*}
&~|\phi(t,\tau)-\phi^0(t,\tau)-\phi(t_1,\tau)+\phi^0(t_1,\tau)|\\
\le&~ C\|V\|_{2}\int_{t_1}^t |\phi(s,\tau)-\phi^0(s,\tau)-\phi(t_1,\tau)+\phi^0(t_1,\tau)|~ds\\
&\quad +C\|V-V^0\|_{2}\tau (t-t_1).
\end{align*}
Now we use Gronwall on $[t_1,t_2]$ to get
\begin{align}
&~|\phi(t_1,\tau)-\phi^0(t_1,\tau)-\phi(t_2,\tau)+\phi^0(t_2,\tau)|\notag\\
\le &~C\|V-V^0\|_{2}\tau (t_2-t_1).
\end{align}
Finally, observe that the range of $t_1,t_2$ and \eqref{eq: phi 0} imply 
\eqn
\tau(t_2-t_1)\le C|\phi^0(t_1,\tau)-\phi^0(t_2,\tau)|,
\eeqn
and the proof of \eqref{eq: char coord separation} is complete.

\noindent{\bf Proof of \eqref{eq: char coord tau separation}.}  To prove \eqref{eq: char coord tau separation}, we consider the equations satisfied by $\phi_\tau$:
\begin{equation}\label{eq: var ODE}
\begin{cases}
\frac{\partial r_\tau}{\partial t}=V_{zr}(r,z)r_\tau+V_{zz}(r,z)z_\tau, \\
\frac{\partial z_\tau}{\partial t}=-V_{rr}(r,z)r_\tau-V_{rz}(r,z)z_\tau,\\
r_\tau(0,\tau)= 1,~z_\tau(0,\tau)=0,
\end{cases}
\end{equation}
and similar equations satisfied by $\phi^0_\tau$ with $V^0$ replacing $V$. Taking differences of the $\phi_\tau$ equations and $\phi^0_\tau$ equations as before, we get
\begin{align}
\left|\frac{\partial}{\partial t}\phi_\tau(t,\tau)-\phi^0_\tau(t,\tau)\right|&\le C\|V\|_{2}|\phi_\tau-\phi^0_\tau|+C\|V-V^0\|_{2}\notag\\
&\quad +C\|V^0\|_{3}|\phi-\phi^0|\notag\\
&\le C\|V\|_{2}|\phi_\tau-\phi^0_\tau|+C\|V-V^0\|_{2}.
\end{align}
Here we have used \eqref{eq: char coord deviation} to absorb the last term by the second term.   We integrate and use Gronwall as before to conclude
\eqn\label{eq: phi_tau}
|\phi_\tau(t,\tau)-\phi^0_\tau(t,\tau)|\le C\|V-V^0\|_{2}.
\eeqn
We integrate in $\tau$ on $[\tau_1,\tau_2]$ to get
$$|\phi(t,\tau_1)-\phi^0(t,\tau_1)-\phi(t,\tau_2)+\phi^0(t,\tau_2)|\le C\|V-V^0\|_{2}|\tau_1-\tau_2|.$$
Finally observing 
$$|\tau_1-\tau_2|\le |\phi^0(t,\tau_1)-\phi^0(t,\tau_2)|$$
completes the proof.
\end{proof}


\begin{proof}[\bf Proof of Proposition \ref{lem: char coord}]
First we will show that $\phi$ is one-to-one on $O^a$; secondly we will show that $\phi(O^a)$ covers $\Omega$ 
; and finally we will show that $\phi$ has non-zero Jacobian on $O^a$. Then we will just define $\tilde\phi$ to be the restriction of $\phi$ to $\phi^{-1}(\Omega)$.

\medskip

\noindent{\it{\underline{Step 1: $\phi$ is 1-1 on $O^a$.}}}  We begin by observing that $\phi(t_1,\tau_1)\ne \phi(t_2,\tau_2)$ whenever $\tau_1\ne\tau_2$. Suppose this were false  and $t_1\ge t_2$. Then $\phi(t_1,\tau_1)= \phi(t_2,\tau_2)$, $\tau_1\ne\tau_2$ and  $t_1\ge t_2$. Tracing the solution backwards in $t$, we would have $\phi(t_1-t_2,\tau_1)=\phi(0,\tau_2)=(\tau_2,0)$  for $t_1-t_2\ge 0$. This is not possible, because the solution never enters the third quadrant of the $(r,z)$ plane and can only go from the fourth quadrant to the first quadrant by the positivity of  $-V_r$ on the positive $r$-axis. To see the latter fact, write $V_r=(V^0)_r+(V-V^0)_r$ and observe that $\inf_{0<r<R+1}\frac{-(V^0)'(r)}{r}>0$ and
\eqn
|(V-V^0)_r(r,0)|=|(V-V^0)_r(r,0)-(V-V^0)_r(0,0)|\le \|V-V^0\|_{2}\cdot r.
\eeqn
Thus 
\eqn\label{eq: V_r<0}
-V_r(r,0)>0 \quad \text{ for }0<r<R+1
\eeqn
if $\|V-V^0\|_{2}$ is sufficiently small. 

Now we consider $\phi(t_1,\tau)$ and $\phi(t_2,\tau)$ with $t_1\ne t_2$. We obviously have $\phi^0(t_1,\tau)\ne \phi^0(t_2,\tau)$.  Inequality \eqref{eq: char coord separation} now implies $\phi(t_1,\tau)\ne \phi(t_2,\tau)$.

\medskip

\noindent{\it{\underline{Step 2: $\Omega \subset \phi (O^a)$.}}}  We pick $(r_0,z_0)\in \Omega$. Let $\tau_0=\sqrt{r_0^2+z_0^2}$. Denote the polar coordinates of $(r_0,z_0)$ by $(\tau_0,\theta_0)$. We solve the ODE in \eqref{eq: ODE} backwards in $t$ for an interval of length $t_0=\frac{\theta\tau_0}{-(V^0)'(\tau_0)}+\frac{\pi}{8a}$, with initial value at $(\tau_0,\theta_0)$. If $V=V^0$, the solution reaches the final point with polar coordinates $(\tau_0,\theta_{\text{f}})=(\tau_0, -\frac{-(V^0)'(\tau_0)}{\tau_0}\frac{\pi}{8a})$. Note that since $\inf_{0<\tau<R+1}\frac{-(V^0)'(\tau)}{\tau}=b> 0$, we have $|\theta_{\text f}|>\frac{\pi b}{8a}$. We now use Remark \ref{rmk: char coord 1} and use a modified version of \eqref{eq: char coord deviation} with starting point at $(\tau_0,\theta_0)$ to conclude that for a general $V$, we have 
\eqn
|(r,z)(-t_0)-\tau_0(\cos\theta_{\text f},\sin\theta_{\text f})|\le C\|V-V^0\|_{2}\cdot\tau_0.
\eeqn
Together with the lower bound on $|\theta_{\text f }|$, this implies that $(r,z)(-t_0)$ is in the fourth quadrant, as long as $\|V-V^0\|_{2}$ is sufficiently small. Since the only way to enter the fourth quadrant is to pass the positive $r$-axis, we conclude that the solution $(r,z)(t)$ must intersect the segment $0<r<R+1$ at some value $r=\tau_1$ for some $t=-t_1\in (-t_0,0)$. By uniqueness, $\phi(t_1,\tau_1)=(r_0,z_0)$. It remains to show that $(t_1,\tau_1)\in O^a$. In fact, we use Remark \ref{rmk: char coord 1} again to get 
\eqn
|(\tau_1,0)-\tau_0(\cos \theta_1,\sin\theta_1)|\le C\|V-V^0\|_{2}\cdot\tau_0,
\eeqn
where $\theta_1$ is the polar angle of $(r,z)(-t_1)$ when $V=V^0$.
This implies 
\eqn
|\tau_0-\tau_1|\le C\|V-V^0\|_{2}.
\eeqn
By the smoothness of $V^0$, we have 
\eqn
\bigg|\frac{\tau_0}{(V^0)'(\tau_0)}-\frac{\tau_1}{(V^0)'(\tau_1)}\bigg|\le C|\tau_0-\tau_1|\le C\|V-V^0\|_{2}.
\eeqn
We now note that 
\begin{align}
0<t_1<t_0<\frac{\pi \tau}{-2(V^0)'(\tau)}+\frac\pi {8a}&\le C\|V-V^0\|_{2}+\frac{\pi \tau_1}{-2(V^0)'(\tau_1)}+\frac\pi {8a}\notag\\
&\le \frac{\pi \tau_1}{-2(V^0)'(\tau_1)}+\frac\pi {4a},
\end{align}
provided $\|V-V^0\|_{2}$ is sufficiently small. Thus $(t_1,\tau_1)\in O^a$.

\medskip

\noindent{\it{\underline{Step 3: $\phi$ has non-zero Jacobian on $O^a$. }}}  We now compute the Jacobian of $\phi(t,\tau)=(r(t,\tau),z(t,\tau))$. 
\begin{align}
r_tz_\tau-r_\tau z_t&=V_z(r(t,\tau),z(t,\tau))z_\tau+V_r(r(t,\tau),z(t,\tau))r_\tau\notag\\
&=\frac{d}{d\tau}V(r(t,\tau),z(t,\tau)).
\end{align}
On the other hand,  
\eqn
\frac{d}{dt}V(r(t,\tau),z(t,\tau))=V_rr_t+V_zz_t = V_rV_z-V_zV_r=0, 
\eeqn
so that 
\eqn
V(r(t,\tau),z(t,\tau))=V(r(0,\tau),z(0,\tau))=V(\tau,0).
\eeqn
Thus
\eqn\label{eq: Jacobian}
r_tz_\tau-r_\tau z_t=\frac{d}{d\tau}V(r(t,\tau),z(t,\tau))=V_r(\tau,0)<0
\eeqn
by \eqref{eq: V_r<0}.
We finish the proof by defining $O=\phi^{-1}(\Omega)$, and $\tilde\phi = \phi\big|_O$.
\end{proof}

Our next task is to derive some H\"older type estimates of the first derivatives of the characteristic coordinates.

\subsection{Regularity of the Characteristics}

\begin{lemma}
Fix $\beta\in (0,1)$. Let $\|V-V^0\|_{2,\beta}$ be sufficiently small and $\phi:O\to \Omega$ be the diffeomorphism given in Proposition \ref{lem: char coord} and let $(t,\tau), (t_1,\tau_1),(t_2,\tau_2)\in O$, $(r,z),(r_1,z_1),(r_2,z_2)\in\Omega$. Denote $\phi^{-1}(r,z)=(t(r,z),\tau(r,z))$. There exists a constant $C>0$ such that
\eqn\label{eq: grad phi 1}
|\phi_t(t,\tau)|\le C\tau,~|\phi_\tau(t,\tau)|\le C,
\eeqn
\eqn\label{eq: grad phi 2}
 |\nabla t(r,z)|\le \frac{C}{\sqrt{r^2+z^2}},~|\nabla \tau(r,z)|\le C.
\eeqn
\eqn\label{eq: grad phi 4}
|\phi_\tau(t_1,\tau_1)-\phi_\tau(t_2,\tau_2)|\le C(|t_1-t_2|+|\tau_1-\tau_2|^\beta),
\eeqn
\eqn\label{eq: grad phi 3}
\left|\frac1{\tau_1}\phi_t(t_1,\tau_1)-\frac{1}{\tau_2}\phi_t(t_2,\tau_2)\right|\le C(|t_1-t_2|^\beta+|\tau_1-\tau_2|^\beta),
\eeqn
\eqn\label{eq: grad phi 5}
|(r_1^2+z_1^2)^{\frac{1+\beta}{2}}\nabla t(r_1,z_1)-(r_2^2+z_2^2)^{\frac{1+\beta}{2}}\nabla t(r_2,z_2)|\le C|(r_1,z_1)-(r_2,z_2)|^\beta,
\eeqn
\eqn\label{eq: grad phi 6}
\left|(r_1^2+z_1^2)^{\frac\beta2}\nabla \tau(r_1,z_1)-(r_2^2+z_2^2)^{\frac\beta2}\nabla \tau(r_2,z_2)\right|\le C|(r_1,z_1)-(r_2,z_2)|^\beta
\eeqn

\end{lemma}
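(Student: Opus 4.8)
\emph{Proof proposal.} The plan is to establish the six estimates in the stated order, bootstrapping from pointwise bounds to H\"older bounds, and to obtain the last two by transferring bounds on $\phi$ to bounds on $\phi^{-1}$. For \eqref{eq: grad phi 1}: since $\phi_t=(V_z,-V_r)\circ\phi$ and $\nabla V(0,0)=0$, the mean value inequality together with $|\phi(t,\tau)|\le(1+C\delta)\tau$ from \eqref{eq: phi close to tau} gives $|\phi_t(t,\tau)|=|\nabla V(\phi(t,\tau))|\le\|V\|_2|\phi(t,\tau)|\le C\tau$, while $|\phi_\tau|\le C$ follows from the comparison estimate \eqref{eq: phi_tau} together with the explicit formula \eqref{eq: phi 0}, which shows $\phi^0_\tau$ is bounded on $O^a$ (here one uses, via \eqref{eq: V0 near 0} and Lemma \ref{lem: LE}, that $\tau/(-(V^0)'(\tau))$ is bounded above and below by positive constants on $(0,R+1]$, so $t$ ranges over a bounded set in $O^a$). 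For \eqref{eq: grad phi 2}: by \eqref{eq: Jacobian} the Jacobian of $\phi$ equals $V_r(\tau,0)$, and since $V_r(0,0)=0$ while $|V_r(\tau,0)|\ge c\tau$ for $\|V-V^0\|_2$ small (the lower-bound companion of \eqref{eq: V_r<0}, using $\inf_{0<\tau<R+1}(-(V^0)'(\tau)/\tau)>0$), we have $|V_r(\tau,0)|\asymp\tau\asymp|(r,z)|$ with $(r,z)=\phi(t,\tau)$; inverting $D\phi$ and using \eqref{eq: grad phi 1} gives $|\nabla t|=|(z_\tau,-r_\tau)|/|V_r(\tau,0)|\le C/\tau\le C/\sqrt{r^2+z^2}$ and $|\nabla\tau|=|(-z_t,r_t)|/|V_r(\tau,0)|\le C$.

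For the H\"older estimates \eqref{eq: grad phi 4} and \eqref{eq: grad phi 3} I would linearize along characteristics. Setting $g:=\phi_\tau-\phi^0_\tau$ and subtracting the variational system \eqref{eq: var ODE} for $V$ from the one for $V^0$, $g$ solves $g_t=A_V(\phi)g+F$ with $A_V$ the matrix of second derivatives of $V$ occurring in \eqref{eq: var ODE}, and $F=[A_V(\phi)-A_{V^0}(\phi^0)]\phi^0_\tau$ satisfying $|F|\le C\|V-V^0\|_2^\beta$ by \eqref{eq: char coord deviation} and $V\in C^{2,\beta}$. All terms on the right are bounded, so $g$ is Lipschitz in $t$; differencing the equation in $\tau$ (the $\tau$-dependence of $A_V(\phi)$ and of $F$ is $C^{0,\beta}$ since $\phi$ is Lipschitz in $\tau$), together with the crucial identity $g(0,\tau)=(1,0)-(1,0)=0$, yields by Gronwall along $t$ that $|g(t,\tau_1)-g(t,\tau_2)|\le C|\tau_1-\tau_2|^\beta$; combining with the H\"older regularity of $\phi^0_\tau$ (read off from \eqref{eq: phi 0}) gives \eqref{eq: grad phi 4}. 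For \eqref{eq: grad phi 3}, put $\psi:=\tfrac1\tau\phi_t=\tfrac1\tau(V_z,-V_r)\circ\phi$, which is bounded by \eqref{eq: grad phi 1}; differentiating \eqref{eq: ODE} in $t$ shows $\psi_t=B_V(\phi)\psi$ with $B_V$ again built from $\nabla^2V$, hence bounded and $C^{0,\beta}$ in $(t,\tau)$. Its initial value $\psi(0,\tau)=\bigl(0,-\tfrac1\tau V_r(\tau,0)\bigr)=\bigl(0,-\tfrac1\tau\int_0^\tau V_{rr}(\sigma,0)\,d\sigma\bigr)$ is $C^{0,\beta}$ in $\tau$ by a Hardy-type estimate for averages of a $C^{0,\beta}$ function. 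The same Gronwall/differencing scheme then gives $\psi\in C^{0,\beta}(O^a)$, which is \eqref{eq: grad phi 3}.

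Finally, for \eqref{eq: grad phi 5} and \eqref{eq: grad phi 6} I would transfer to weighted estimates on $\Omega$. Since $V$ is constant along characteristics, $V(r,z)=V(\tau(r,z),0)$, hence $\nabla\tau(r,z)=\nabla V(r,z)/V_r(\tau(r,z),0)$; and from $D\phi^{-1}=(D\phi)^{-1}$, $\nabla t(r,z)=(z_\tau,-r_\tau)\big|_{\phi^{-1}(r,z)}/V_r(\tau(r,z),0)$. The weights are chosen exactly to neutralize the degeneracy $1/V_r(\tau,0)\asymp1/|(r,z)|$: pulling back via $\phi$, a computation shows that $(r^2+z^2)^{(1+\beta)/2}\nabla t$ and $(r^2+z^2)^{\beta/2}\nabla\tau$ become functions of $(t,\tau)$ whose $t$-derivative is $O(\tau^\beta)$ and whose $\tau$-dependence is $C^{0,\beta}$ (the inputs being \eqref{eq: grad phi 4}, the fact that $\nabla V\in C^{1,\beta}$ with $\nabla V(0,0)=0$, and $\tau(r,z)\asymp|(r,z)|$ from \eqref{eq: phi close to tau}). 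One then combines this $O(\tau^\beta)$ Lipschitz-in-$t$ bound with the compression estimate $\tfrac1C\tau|t_1-t_2|\le|\phi(t_1,\tau)-\phi(t_2,\tau)|\le C\tau|t_1-t_2|$ (from \eqref{eq: phi 0} and \eqref{eq: char coord separation}) and with $|\tau_1-\tau_2|\asymp|\phi(t,\tau_1)-\phi(t,\tau_2)|$ (from \eqref{eq: phi 0} and \eqref{eq: char coord tau separation}), splitting an increment through an intermediate point $\phi(t_1,\tau_2)$, to convert these into the $\beta$-H\"older bounds \eqref{eq: grad phi 5}--\eqref{eq: grad phi 6} in the $(r,z)$ variables.

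\emph{Main obstacle.} The genuinely delicate part is this last step: $\phi$ degenerates near the origin, compressing the $t$-direction by a factor $\asymp\tau$, so raw $(t,\tau)$-H\"older estimates do not transfer to $(r,z)$-H\"older estimates; the weights $(r^2+z^2)^{(1\pm\beta)/2}$ are tuned precisely so that the pulled-back functions are Lipschitz in $t$ with a constant decaying like $\tau^\beta$, which is exactly what is needed to beat the $\asymp1/\tau$ expansion of $\phi^{-1}$. Making this rigorous requires a careful case split comparing $|(r_1,z_1)-(r_2,z_2)|$ with $|(r_1,z_1)|$, rather than a soft composition argument. A secondary but more routine point is the Hardy-type estimate for the initial data of $\psi$ in \eqref{eq: grad phi 3}.
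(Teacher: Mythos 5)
Your proposal is correct and follows essentially the same architecture as the paper's proof: pointwise bounds from the Jacobian identity \eqref{eq: Jacobian} and the lower bound $|V_r(\tau,0)|\gtrsim\tau$, H\"older bounds on $\phi_\tau$ and $\phi_t/\tau$ via Gronwall on the variational system \eqref{eq: var ODE}, and then the weighted transfer to $(r,z)$ with a case split on $|(r_1,z_1)-(r_2,z_2)|$ versus $|(r_1,z_1)|$ — which is exactly the case analysis the paper carries out for \eqref{eq: grad phi 5} and \eqref{eq: grad phi 6}. The one step where your device genuinely differs is \eqref{eq: grad phi 3}: you propose the ODE $\psi_t=B_V(\phi)\psi$ for $\psi=\phi_t/\tau$ together with a Hardy-type H\"older estimate for the initial datum $-\tfrac1\tau\int_0^\tau V_{rr}(\sigma,0)\,d\sigma$, whereas the paper uses the integral representation $\tfrac1\tau V_z(\phi(t,\tau))=\int_0^1\bigl[V_{zr}(\phi(t,s\tau))r_\tau(t,s\tau)+V_{zz}(\phi(t,s\tau))z_\tau(t,s\tau)\bigr]\,ds$ (valid since $V_z(\phi(t,0))=0$) and differences it directly, which sidesteps both the Hardy estimate and any regularity discussion of the initial data; the two routes are equivalent in strength, and yours additionally requires (harmless, given Lemma \ref{lem: LE}) H\"older control of $\phi^0_\tau$ in $\tau$ near $\tau=0$ for your treatment of \eqref{eq: grad phi 4} via $g=\phi_\tau-\phi^0_\tau$, which the paper avoids by running Gronwall on $\phi_\tau$ itself.
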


\begin{proof}  \noindent{\bf Proof of \eqref{eq: grad phi 1}.}  
By \eqref{eq: ODE} and \eqref{eq: phi close to tau}, 
$$|\phi_t(t,\tau)|\le \|V\|_{2}|\phi(t,\tau)|\le C\|V\|_{2}\tau.$$
The bound on $\phi_\tau$ is easily obtained from \eqref{eq: var ODE}. In fact, \eqref{eq: phi_tau} gives such a bound.

\noindent{\bf Proof of \eqref{eq: grad phi 2}.}  To estimate $\nabla \phi^{-1}$, we note that
\eqn
t_r=\frac{z_\tau}{r_tz_\tau-r_\tau z_t}, ~t_z=\frac{-r_\tau}{r_tz_\tau-r_\tau z_t},
\eeqn
\eqn\label{eq: grad tau est 1}
\tau_r=\frac{-z_t}{r_tz_\tau-r_\tau z_t},~\tau_z=\frac{r_t}{r_tz_\tau-r_\tau z_t}.
\eeqn
Let us estimate $t_r$. First recall \eqref{eq: Jacobian} and write
\eqn\label{eq: t_r est 1}
t_r=\frac{z_\tau}{\frac1\tau V_r(\tau,0)}\frac1\tau.
\eeqn
Now $|z_\tau|\le C$ by \eqref{eq: grad phi 1}, and since $(V_r-V_r^0)(0,0)=0$,
\begin{align*}
\frac1\tau V_r(\tau,0)
&=\frac{(V^0)_r(\tau,0)}{\tau}+\int_0^1 (V-V^0)_{rr}(s\tau,0)~ds.
\end{align*} 
Thus
\eqn
\left|\frac{V_r(\tau,0)}\tau-\frac{(V^0)_r(\tau,0)}{\tau}\right|\le \|V-V^0\|_{2}
\eeqn
Since $\inf_{0<\tau<R+1}\frac{-(V^0)_r(\tau,0)}{\tau}=b>0$, we have $\left|\frac{V_r(\tau,0)}\tau\right|>\frac b2>0$ when $\|V-V^0\|_{2}$ is sufficiently small. We deduce from \eqref{eq: t_r est 1}, \eqref{eq: grad phi 1} and \eqref{eq: phi close to tau} that 
\eqn
|t_r|\le \frac C\tau \le \frac{C}{|\phi(t,\tau)|}=\frac{C}{\sqrt{r^2+z^2}}.
\eeqn
The expressions $t_z, \tau_r,\tau_z$ can be estimated similarly. The only difference for $\tau_r,\tau_z$ is that the numerators in \eqref{eq: grad tau est 1} involve $\phi_t$, which can be estimated with an extra factor of $\tau$, as is seen in \eqref{eq: grad phi 1}.

\noindent{\bf Proof of \eqref{eq: grad phi 4}.}  We next obtain H\"older estimates of the first derivatives of $\phi$ and $\phi^{-1}$. 
It is easy to see that one just needs to prove it first for the case $\tau_1=\tau_2$, then for the case $t_1=t_2$. Assuming for the moment that $\tau_1=\tau_2=\tau$, we note that $\phi_\tau(t_2,\tau)-\phi_\tau(t_1,\tau)=\int_{t_1}^{t_2} \phi_{t\tau} (s,\tau)ds$  and use \eqref{eq: var ODE} for $\phi_{t\tau}$  to get
$$|\phi_\tau(t_2,\tau)-\phi_\tau(t_1,\tau)|\le \|V\|_{2}\|\phi_\tau\|_0|t_2-t_1|\le C|t_2-t_1|.$$
We assume $t_1=t_2=t$ and use \eqref{eq: var ODE} again to get
\begin{align*}
\left|\frac{\partial}{\partial t}\phi_\tau(t,\tau_1)-\phi_\tau(t,\tau_2)\right|\le&~ \|V\|_{2}|\phi_\tau(t,\tau_1)-\phi_\tau(t,\tau_2)|\\
&\quad+\|V\|_{2,\beta}|\phi(t,\tau_1)-\phi(t,\tau_2)|^\beta\\
\le&~ \|V\|_{2}|\phi_\tau(t,\tau_1)-\phi_\tau(t,\tau_2)|\\
&\quad+C\|V\|_{2,\beta}|\tau_1-\tau_2|^\beta.
\end{align*}
Here the bound on $\phi(t,\tau_1)-\phi(t,\tau_2)$ can be obtained by using \eqref{eq: grad phi 1}  as $$|\phi(t,\tau_1)-\phi(t,\tau_2)| = \left| \int_{\tau_2}^{\tau_1} \phi_\tau(t, s)ds \right| \le C |\tau_1-\tau_2|.$$
Now we integrate and use Gronwall to conclude
$$
|\phi_\tau(t,\tau_1)-\phi_\tau(t,\tau_2)|\le C|\tau_1-\tau_2|^\beta.
$$
This proves \eqref{eq: grad phi 4}.

\noindent{\bf Proof of \eqref{eq: grad phi 3}.} 
Let us now compute
\begin{align}
\frac1\tau r_t(t,\tau) &= \frac1\tau V_z(\phi(t,\tau))\notag\\
&=\int_0^1 [V_{zr}(\phi(t,s\tau))r_\tau(t,s\tau)+V_{zz}(\phi(t,s\tau))z_\tau(t,s\tau)]~ds,
\end{align}
for which we used $V_z(\phi(t,0))=0$. It follows that
\begin{align*}
&~\frac1{\tau_1} r_t(t_1,\tau_1) -\frac1{\tau_2} r_t(t_2,\tau_2) \\
=&~\int_0^1 [V_{zr}(\phi(t_1,s\tau_1))r_\tau(t_1,s\tau_1)-V_{zr}(\phi(t_2,s\tau_2))r_\tau(t_2,s\tau_2)]~ds\\
&\quad +\int_0^1[V_{zz}(\phi(t_1,s\tau_1))z_\tau(t_1,s\tau_1)-V_{zz}(\phi(t_2,s\tau_2))z_\tau(t_2,s\tau_2)]~ds,
\end{align*}
We can write similar expressions for $\frac1\tau z_t$. 
Inequality \eqref{eq: grad phi 3} now follows from the following estimates:
\begin{align*}
|\nabla^2 V(\phi(t_1,s\tau_1))-\nabla^2 V(\phi(t_2,s\tau_2))|&\le \|V\|_{2,\beta}|\phi(t_1,s\tau_1)-\phi(t_2,s\tau_2)|^\beta\\
&\le C\|\nabla \phi\|_0^\beta(|t_1-t_2|+|\tau_1-\tau_2|)^\beta\\
&\le C(|t_1-t_2|+|\tau_1-\tau_2|)^\beta,\\
|\phi_\tau(t,s\tau)|&\le C,\\
|\phi_\tau(t_1,s\tau_1)-\phi_\tau(t_2,s\tau_2)|&\le C(|t_1-t_2|+|\tau_1-\tau_2|^\beta).
\end{align*}
Note that the last estimate above is the previously proven  \eqref{eq: grad phi 4}.

\noindent{\bf Proof of \eqref{eq: grad phi 5}.}  We now recall \eqref{eq: t_r est 1} to get 
\eqn
(r^2+z^2)^{\frac{1+\beta}{2}}t_r(r,z)=\frac{(r^2+z^2)^{\frac{1+\beta}{2}}}{\tau}\frac{z_\tau}{\int_0^1V_{rr}(s\tau,0)~ds}.
\eeqn
Denoting by $(t_i,\tau_i)=\phi^{-1}(r_i,z_i)$, $i=1,2$, we have
\begin{align}
&~(r_1^2+z_1^2)^{\frac{1+\beta}{2}}t_r(r_1,z_1)-(r_2^2+z_2^2)^{\frac{1+\beta}{2}}t_r(r_2,z_2)\notag\\
=&~\frac{|\phi(t_1,\tau_1)|^{1+\beta}}{\tau_1}\frac{z_\tau(t_1,\tau_1)}{\int_0^1 V_{rr}(s\tau_1,0)~ds}-\frac{|\phi(t_2,\tau_2)|^{1+\beta}}{\tau_2}\frac{z_\tau(t_2,\tau_2)}{\int_0^1 V_{rr}(s\tau_2,0)~ds}\notag\\
=&~\frac{|\phi(t_1,\tau_1)|^{1+\beta}}{\tau_1}\left(\frac{z_\tau(t_1,\tau_1)}{\int_0^1 V_{rr}(s\tau_1,0)~ds}-\frac{z_\tau(t_2,\tau_2)}{\int_0^1 V_{rr}(s\tau_2,0)~ds}\right)\label{eq: grad phi Holder 1}\\
&\quad+\frac{z_\tau(t_2,\tau_2)}{\int_0^1 V_{rr}(s\tau_2,0)~ds}\left(\frac{|\phi(t_1,\tau_1)|^{1+\beta}}{\tau_1}-\frac{|\phi(t_2,\tau_2)|^{1+\beta}}{\tau_2}\right).\label{eq: grad phi Holder 2}
\end{align}
To estimate \eqref{eq: grad phi Holder 1}, we use \eqref{eq: phi close to tau} for the size of $\phi(t_1,\tau_1)$, \eqref{eq: grad phi 1}, \eqref{eq: grad phi 4} for $z_\tau(t_1,\tau_1)-z_\tau(t_2,\tau_2)$, and H\"older estimates on $V_{rr}$ for the integrand to infer that the size of \eqref{eq: grad phi Holder 1} is bounded by 
\eqn\label{eq: grad phi Holder 3}
C\tau_1^\beta(|t_1-t_2|+|\tau_1-\tau_2|^\beta).
\eeqn
Without loss of generality, assume $0<|(r_1,z_1)|\le|( r_2,z_2)|$. If $\frac12|( r_2,z_2)|\le |( r_1,z_1)|\le |( r_2,z_2)|$, use \eqref{eq: phi close to tau}, \eqref{eq: grad phi 2} to deduce  that \eqref{eq: grad phi Holder 3} is bounded by 
\begin{align}
&~C(r_1^2+z_1^2)^{\frac\beta2}\left(\frac{1}{\sqrt{r_1^2+z_1^2}}|(r_1,z_1)-(r_2,z_2)|+|(r_1,z_1)-(r_2,z_2)|^\beta\right)\notag\\
\le &~C|(r_1,z_1)-(r_2,z_2)|^\beta\left(\frac{|(r_1,z_1)-(r_2,z_2)|^{1-\beta}}{|(r_1,z_1)|^{1-\beta}}+|(r_1,z_1)|^\beta\right)\notag\\
\le &~C|(r_1,z_1)-(r_2,z_2)|^\beta.
\end{align}
If $|(r_1,z_1)|\le \frac12|(r_2,z_2)|$, then $|(r_1,z_1)|\le |(r_1,z_1)-(r_2,z_2)|$. In this case, \eqref{eq: grad phi Holder 3} is bounded by 
$$
C|(r_1,z_1)|^\beta\left(1+|(r_1,z_1)-(r_2,z_2)|^\beta\right)\le C|(r_1,z_1)-(r_2,z_2)|^\beta.
$$
Next we estimate the terms in the parentheses of \eqref{eq: grad phi Holder 2}. If $\frac12|( r_2,z_2)|\le |( r_1,z_1)|\le |( r_2,z_2)|$, we write them as 
\eqn
\frac{|(r_1,z_1)|^{1+\beta}-|(r_2,z_2)|^{1+\beta}}{\tau_1}+|(r_2,z_2)|^{1+\beta}\frac{\tau_2-\tau_1}{\tau_1\tau_2}, 
\eeqn
which  can be bounded by 
\begin{align}
&~C\left(\frac{|(r_1,z_1)|^\beta|(r_1,z_1)-(r_2,z_2)|}{|(r_1,z_1)|}+\frac{|(r_1,z_1)-(r_2,z_2)|}{|(r_2,z_2)|^{1-\beta}}\right)\notag\\
\le &~C|(r_1,z_1)-(r_2,z_2)|^\beta\frac{|(r_1,z_1)-(r_2,z_2)|^{1-\beta}}{|(r_1,z_1)|^{1-\beta}}\notag\\
\le&~C|(r_1,z_1)-(r_2,z_2)|^\beta.
\end{align}
If $|(r_1,z_1)|\le \frac12|(r_2,z_2)|$, the terms in the parentheses of \eqref{eq: grad phi Holder 2} is bounded by 
$$C\left(|(r_1,z_1)|^\beta+|(r_2,z_2)|^\beta\right)\le C|(r_2,z_2)|^\beta\le C|(r_1,z_1)-(r_2,z_2)|^\beta.$$
The proof of \eqref{eq: grad phi 5} for $t_r$ is now complete. The estimates on $t_z$ are completely analogous. 

\noindent{\bf Proof of \eqref{eq: grad phi 6}.}  We write 
\eqn
(r^2+z^2)^{\frac\beta2}\tau_r(r,z) = (r^2+z^2)^{\frac\beta2}\frac{-\frac{z_t}{\tau}}{\int_0^1V_{rr}(s\tau,0)~ds}
\eeqn
and estimate as before. We only have to use \eqref{eq: grad phi 3} to replace \eqref{eq: grad phi 4} in the argument.
\end{proof}

\subsection{Weighted H\"older spaces and $S$ estimates}

To facilitate the proof of Theorem \ref{lem: V to S map}, we introduce the following weighted H\"older spaces. For simplicity we denote $(r,z)$ by $x$. Given $\beta\in(0,1)$, $k\in \real$, for a continuous function $f$ on $B_R$, we define the norms
\eqn\label{eq: weighted Holder norm 1}
\|f\|_{C^{0,\beta}_{(k)}}=\sup_{x\in B_R}|x|^k|f(x)|+\sup_{x,y\in B_R,x\ne y}\frac{\left||x|^{k+\beta}f(x)-|y|^{k+\beta}f(y)\right|}{|x-y|^\beta}.
\eeqn
\eqn\label{eq: weighted Holder norm 2}
\|f\|_{C^{0,\beta}_{[k]}}=\sup_{x\in B_R}|x|^k|f(x)|+\sup_{x,y\in B_R,x\ne y}\min(|x|^{k+\beta},|y|^{k+\beta})\frac{\left|f(x)-f(y)\right|}{|x-y|^\beta}.
\eeqn
The spaces $C^{0,\beta}_{(k)}$ and $C^{0,\beta}_{[k]}$ are defined accordingly as sets of continuous functions with finite norms. 
We may now reformulate \eqref{eq: grad phi 2}, \eqref{eq: grad phi 5}, \eqref{eq: grad phi 6} as 
\eqn\label{eq: grad t tau space}
\nabla t\in C^{0,\beta}_{(1)}, ~\nabla \tau\in C^{0,\beta}_{(0)}.
\eeqn 
The spaces $C^{0,\beta}_{(k)}$ and $C^{0,\beta}_{[k]}$ actually coincide, as we show in the next lemma.  
\begin{lemma}\label{lem: equiv Holder}
For each $k$, there exists a constant $C>0$ such that 
\eqn
\frac1C\|f\|_{C^{0,\beta}_{(k)}}\le \|f\|_{C^{0,\beta}_{[k]}}\le C\|f\|_{C^{0,\beta}_{(k)}}.
\eeqn
\end{lemma}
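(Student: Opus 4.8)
The plan is to prove the two inequalities separately, and the key point is that both norms have the same $\sup$-weighted $C^0$ term $\sup_x |x|^k |f(x)|$, so only the seminorms need to be compared. Write $[f]_{(k)}$ and $[f]_{[k]}$ for the respective H\"older seminorms (the second terms in \eqref{eq: weighted Holder norm 1}, \eqref{eq: weighted Holder norm 2}). Without loss of generality take $x,y\in B_R$ with $|x|\le |y|$, so that $\min(|x|^{k+\beta},|y|^{k+\beta})$ is $|x|^{k+\beta}$ when $k+\beta\ge 0$ and $|y|^{k+\beta}$ when $k+\beta<0$; in either case I will handle the two regimes $|x|\le\frac12|y|$ (``far'') and $\frac12|y|\le |x|\le |y|$ (``near'') as in the estimates already carried out in the proof of \eqref{eq: grad phi 5}.

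First I would show $[f]_{[k]}\le C[f]_{(k)}$. In the near regime, $|x|$ and $|y|$ are comparable, so I can pull the weight out: $|f(x)-f(y)| \le |x|^{-k-\beta}\big||x|^{k+\beta}f(x)-|y|^{k+\beta}f(y)\big| + |x|^{-k-\beta}\big||y|^{k+\beta}-|x|^{k+\beta}\big||f(y)|$. The first term is controlled by $[f]_{(k)}$ after multiplying by $\min(|x|^{k+\beta},|y|^{k+\beta})$; the second uses $\big||y|^{k+\beta}-|x|^{k+\beta}\big|\le C|x|^{k+\beta-1}|x-y|$ (valid by the mean value theorem since the two radii are comparable and bounded) together with $|x|^k|f(y)|\le C|y|^k|f(y)|\le C\|f\|_{C^0_{(k)}}$, which gives a bound $\le C|x-y|^{1}\le C|x-y|^\beta$ on $B_R$. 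In the far regime, $|x|\le|x-y|$, and after multiplying by $\min(|x|^{k+\beta},|y|^{k+\beta})$ one bounds each of the two pieces $\min(\cdots)|f(x)|$ and $\min(\cdots)|f(y)|$ directly: e.g. $|x|^{k+\beta}|f(x)| = |x|^\beta\cdot |x|^k|f(x)| \le |x-y|^\beta \|f\|_{C^0_{(k)}}$, and similarly for the $y$ term using $|y|\le 2|x|\le 2|x-y|$ (or $|y|\le R$) as appropriate to the sign of $k+\beta$.

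Conversely, for $[f]_{(k)}\le C[f]_{[k]}$, write $|x|^{k+\beta}f(x)-|y|^{k+\beta}f(y) = |x|^{k+\beta}(f(x)-f(y)) + (|x|^{k+\beta}-|y|^{k+\beta})f(y)$. In the near regime, $|x|^{k+\beta}\le C\min(|x|^{k+\beta},|y|^{k+\beta})$, so the first term is bounded by $C[f]_{[k]}|x-y|^\beta$, and the second is handled exactly as above via $\big||x|^{k+\beta}-|y|^{k+\beta}\big|\le C|x|^{k+\beta-1}|x-y|$ and $|x|^k|f(y)|\le C\|f\|_{C^0_{(k)}}$. In the far regime I again estimate $|x|^{k+\beta}|f(x)|$ and $|y|^{k+\beta}|f(y)|$ separately against $\|f\|_{C^0_{(k)}}|x-y|^\beta$ using $|x|\le|x-y|$ and $|y|\le R$. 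Collecting the cases and absorbing the common $C^0$ term yields the stated equivalence.

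The main obstacle is bookkeeping rather than depth: one must be careful that the constant $C$ in $\big||x|^{k+\beta}-|y|^{k+\beta}\big|\le C|x|^{k+\beta-1}|x-y|$ depends on $k$, $\beta$ and on the bound $R$ on the radii (needed when $k+\beta-1$ has a sign that makes the difference quotient blow up at $0$ or at $\infty$), and that in the ``far'' regime one never divides by a small radius. There is also a trivial degenerate case $x=0$ or $y=0$ to note (one of the weighted values vanishes if $k+\beta>0$, or the $\min$ is zero), which is covered by the same estimates or by continuity. Since all the genuinely delicate case analysis of this type has already appeared in the proof of \eqref{eq: grad phi 5}, I would phrase the argument compactly by reference to those computations.
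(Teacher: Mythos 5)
Your plan follows essentially the same route as the paper's proof: both norms share the zeroth-order term, and the seminorms are compared by splitting $|x|^{k+\beta}f(x)-|y|^{k+\beta}f(y)$ into a weight-times-difference piece and a difference-of-weights piece, handled in the comparable and separated regimes with the mean value theorem, exactly as in Lemma \ref{lem: equiv Holder}. Two intermediate inequalities are misstated --- in the far regime $|x|\le\tfrac12|y|$ one has $|y|\ge 2|x|$ rather than $|y|\le 2|x|$ (though the inequality you actually need, $|y|\le 2|x-y|$, does hold there), and the near-regime bound $C|x|^{\beta-1}|x-y|\le C|x-y|$ fails for small $|x|$ (instead use $|x-y|\le 3|x|$ to pass directly to $C|x-y|^{\beta}$) --- but these are immediate to repair and do not affect the argument.
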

\begin{proof}
First consider the case that $k+\beta\ge 0$. Assume $x,y$ are in the first octant and $|x|\le |y|$ without loss of generality, so that $\min(|x|^{k+\beta},|y|^{k+\beta})=|x|^{k+\beta}$. We estimate
\begin{align*}
&~\frac{|x|^{k+\beta}f(x)-|y|^{k+\beta}f(y)}{|x-y|^\beta}\\
=&~\frac{|x|^{k+\beta}(f(x)-f(y))}{|x-y|^\beta}+\frac{f(y)\left(|x|^{k+\beta}-|y|^{k+\beta}\right)}{|x-y|^\beta}\\
=&~(\text{I})+(\text{II})
\end{align*}
We have $$|\text{I}|\le \|f\|_{C^{0,\beta}_{[k]}}.$$ 
If $\frac{|y|}{2}\le |x|\le |y|$,
\begin{align*}
|\text{II}|\le C\frac{|f(y)||y|^{k+\beta-1}|x-y|}{|x-y|^\beta}\le C\sup_{y\in B_R}|y|^k|f(y)|.
\end{align*}
If $|x|\le \frac{|y|}{2}$,
$$|\text{II}|\le C|f(y)|\frac{|y|^{k+\beta}}{|y|^\beta}\le C\sup_{y\in B_R}|y|^k|f(y)|.$$
These estimates imply $\|f\|_{C^{0,\beta}_{(k)}}\le C\|f\|_{C^{0,\beta}_{[k]}}$.
The case when $k+\beta<0$ can be estimated in a similar way by just assuming $|x|\ge |y|$ in the above estimates instead. 

Let us now focus on the reverse direction and assume $k+\beta\ge 0$ again. Assuming $|x|\le |y|$ without loss of generality, we write
\begin{align*}
&~|x|^{k+\beta}\frac{f(x)-f(y)}{|x-y|^\beta}\\
=&~\frac{|x|^{k+\beta}f(x)-|y|^{k+\beta}f(y)}{|x-y|^\beta}+\frac{f(y)\left(|y|^{k+\beta}-|x|^{k+\beta}\right)}{|x-y|^\beta}\\
=&~(\text{I})+(\text{II}).
\end{align*}
Obviously
$$|\text{I}|\le \|f\|_{C^{0,\beta}_{(k)}},$$
and $(\text{II})$ can be estimated as above. The case $k+\beta<0$ is analogous.
\end{proof}

The weighted H\"older spaces $C^{0,\beta}_{(k)}$ enjoy the following algebraic property, which will be useful for the  H\"older estimates of the entropy.  
\begin{lemma}\label{lem: w Holder prod}
For given $\beta\in (0,1)$, $k,l\in\real$, there exists a constant $C>0$ such that
\eqn
\|fg\|_{C^{0,\beta}_{(k+l)}}\le C\|f\|_{C^{0,\beta}_{(k)}}\|g\|_{C^{0,\beta}_{(l)}}.
\eeqn
\end{lemma}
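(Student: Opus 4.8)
The plan is to unfold the two weighted norms $\|\cdot\|_{C^{0,\beta}_{(k)}}$, $\|\cdot\|_{C^{0,\beta}_{(l)}}$ and $\|\cdot\|_{C^{0,\beta}_{(k+l)}}$ and bound each term in the product by an elementary Leibniz-type argument. The sup part is immediate: for every $x\in B_R$,
\[
|x|^{k+l}|f(x)g(x)| = \bigl(|x|^k|f(x)|\bigr)\bigl(|x|^l|g(x)|\bigr)\le \|f\|_{C^{0,\beta}_{(k)}}\|g\|_{C^{0,\beta}_{(l)}}.
\]
The work is in the H\"older seminorm. By Lemma \ref{lem: equiv Holder} it suffices to control the equivalent $[\,\cdot\,]$-type seminorm of $fg$, i.e. to bound, for $x\ne y$ in $B_R$ with (without loss of generality) $|x|\le|y|$,
\[
\min\bigl(|x|^{k+l+\beta},|y|^{k+l+\beta}\bigr)\frac{|f(x)g(x)-f(y)g(y)|}{|x-y|^\beta}.
\]
Working with the $[\,\cdot\,]$ form is what makes the splitting clean, since the minimum weight factors through products.

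The key step is the splitting $f(x)g(x)-f(y)g(y) = f(x)\bigl(g(x)-g(y)\bigr) + g(y)\bigl(f(x)-f(y)\bigr)$. For the first piece, with $|x|\le|y|$ so that the minimum is attained at $x$,
\[
\min\bigl(|x|^{k+l+\beta},|y|^{k+l+\beta}\bigr)\frac{|f(x)||g(x)-g(y)|}{|x-y|^\beta}
\le \bigl(|x|^k|f(x)|\bigr)\cdot\min\bigl(|x|^{l+\beta},|y|^{l+\beta}\bigr)\frac{|g(x)-g(y)|}{|x-y|^\beta},
\]
using $|x|^{k+l+\beta}=|x|^k\cdot|x|^{l+\beta}$ and $|x|^k\ge 0$; this is bounded by $\|f\|_{C^{0,\beta}_{(k)}}\|g\|_{C^{0,\beta}_{[l]}}$. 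The second piece is handled symmetrically, writing $|x|^{k+l+\beta}=|x|^{k+\beta}\cdot|x|^{l}\le |x|^{k+\beta}\cdot|y|^{l}$ when $l\ge 0$ (or $\le |x|^{k+\beta}|x|^l$ with the roles rearranged when $l<0$, using the corresponding $\min$ at $x$ again), to get the bound $\|f\|_{C^{0,\beta}_{[k]}}\|g\|_{C^{0,\beta}_{(l)}}$. Combining the two pieces and invoking Lemma \ref{lem: equiv Holder} once more to pass back from $[\,\cdot\,]$ to $(\,\cdot\,)$ norms yields the claimed inequality with a constant depending only on $\beta,k,l$.

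The only mildly delicate point — and the one I would flag as the main obstacle — is bookkeeping the sign of the exponents $l$ (and $k$) when deciding which of $|x|^l$, $|y|^l$ to bound against, since for negative exponents the inequality $|x|^l\le|y|^l$ reverses; this is exactly the same case distinction ($k+\beta\ge0$ versus $k+\beta<0$) that appears in the proof of Lemma \ref{lem: equiv Holder}, so it can be dispatched by the same device of swapping the roles of $x$ and $y$. No genuinely new idea is needed beyond the Leibniz splitting and the norm equivalence already established.
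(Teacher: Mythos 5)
Your overall strategy (Leibniz splitting of $f(x)g(x)-f(y)g(y)$ combined with the equivalence of the $(\cdot)$ and $[\cdot]$ norms from Lemma \ref{lem: equiv Holder}) is a legitimate alternative route, and the sup part is fine. But the exponent bookkeeping that you defer to ``the same device of swapping the roles of $x$ and $y$'' hides a genuine gap: there are values of $(k,l)$ for which \emph{neither} assignment of evaluation points in the Leibniz splitting admits a term-by-term bound, so no amount of swapping rescues the argument by itself. Take $k=l=-\beta/2$, so that $k+\beta=l+\beta=\beta/2>0$ and $k+l+\beta=0$, and let $|x|\le|y|$. In the splitting $f(x)(g(x)-g(y))+g(y)(f(x)-f(y))$ the second term carries the weight $\min(|x|^{k+l+\beta},|y|^{k+l+\beta})=1$, while the only available bounds are $|g(y)|\le|y|^{\beta/2}\|g\|_{C^{0,\beta}_{(l)}}$ and $|f(x)-f(y)|/|x-y|^\beta\le|x|^{-\beta/2}\|f\|_{C^{0,\beta}_{[k]}}$; their product carries the unbounded factor $(|y|/|x|)^{\beta/2}$. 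The other splitting fails symmetrically on the $g$-difference term. Note also that your displayed inequality for the first piece is literally false whenever $l+\beta<0\le k+l+\beta$, since then $\min(|x|^{l+\beta},|y|^{l+\beta})=|y|^{l+\beta}<|x|^{l+\beta}$.

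The gap is repairable, but only by importing the \emph{other} dichotomy from the proof of Lemma \ref{lem: equiv Holder}, namely $|x|\le\tfrac{|y|}2$ versus $\tfrac{|y|}2\le|x|\le|y|$: in the far-apart regime $|x-y|\ge\tfrac{|y|}2$, so one bounds $|f(x)g(x)-f(y)g(y)|$ crudely by the sup parts of the norms; in the comparable regime the weights $|x|^a$ and $|y|^a$ agree up to constants and all sign issues disappear. The paper sidesteps the entire case analysis with an exact three-term identity in the $(\cdot)$ norm itself,
\begin{align*}
|x|^{k+l+\beta}(fg)(x)-|y|^{k+l+\beta}(fg)(y)
=&~|x|^kf(x)\bigl(|x|^{l+\beta}g(x)-|y|^{l+\beta}g(y)\bigr)
+|y|^lg(y)\bigl(|x|^{k+\beta}f(x)-|y|^{k+\beta}f(y)\bigr)\\
&~+|x|^kf(x)\,|y|^lg(y)\bigl(|y|^\beta-|x|^\beta\bigr),
\end{align*}
each term of which is controlled by $\|f\|_{C^{0,\beta}_{(k)}}\|g\|_{C^{0,\beta}_{(l)}}$ uniformly in $k,l\in\real$, the only analytic input being $\bigl||y|^\beta-|x|^\beta\bigr|\le|x-y|^\beta$. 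You should either carry out the near/far dichotomy in full or adopt this identity.
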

\begin{proof}
One obviously has 
$$\left||x|^{k+l}f(x)g(x)\right|\le \sup_{x\in B_R}|x|^k|f(x)|\cdot\sup_{x\in B_R}|x|^l|g(x)|.$$
On the other hand, for $x,y\in B_R$, $x\ne y$,
\begin{align*}
&~\frac{|x|^{k+l+\beta}f(x)g(x)-|y|^{k+l+\beta}f(y)g(y)}{|x-y|^\beta}\notag\\
=&~\frac{|x|^kf(x)\left(|x|^{l+\beta}g(x)-|y|^{l+\beta}g(y)\right)}{|x-y|^\beta}+\frac{|y|^lg(y)\left(|x|^{k+\beta}f(x)-|y|^{k+\beta}f(y)\right)}{|x-y|^\beta}\\
&~+|x|^kf(x)|y|^lg(y)\frac{|y|^\beta-|x|^\beta}{|x-y|^\beta}.
\end{align*}
It is obvious that every term in the above sum is bounded by $C\|f\|_{C^{0,\beta}_{(k)}}\|g\|_{C^{0,\beta}_{(l)}}$.
\end{proof}

We now return to the proof of Theorem \ref{lem: V to S map}. Note the obvious bound $\|f\|_{0,\beta}\le \|f\|_{C^{0,\beta}_{(-\beta)}}$. Thus we only need to get $C^{0,\beta}_{(-\beta)}$ estimates on $\nabla S$.
\begin{proof}[\bf Proof of Theorem  \ref{lem: V to S map}]
It is easy to see that the unique solution $S$ to \eqref{eq: transport}, \eqref{eq: IC} is given by 
\eqn\label{eq: S sol}
S(\phi(t,\tau))=e^{\frac{\mu s_0(\tau)}{\gamma}}-\kappa\int_0^t (\omega^2 r)_z(\phi(t',\tau))~dt'.
\eeqn
More specifically, \eqref{eq: S sol} defines $S$ in the first quadrant, while the values of $S$ in the other quadrants are determined by symmetry. We observe that $S(r,z)\equiv 1$ near the boundary of $B_R$. In fact, by the set up of Theorem \ref{lem: V to S map} we know that $s_0(\tau)\equiv 0$ for $\tau>\frac{2R^0+4R}{6}$, $\omega^2(r,z)\equiv 0$ for $\sqrt{r^2+z^2}>\frac{2R^0+4R}{6}$, while \eqref{eq: phi close to tau} implies that 
\eqn\label{eq: tau phi comparable}
\sqrt{\frac{2R^0+4R}{R^0+5R}}\tau<|\phi(t,\tau)|<\sqrt{\frac{R^0+5R}{2R^0+4R}}\tau
\eeqn
if $\|V-V^0\|_{C^2}<\delta$ is sufficiently small. Now if $\sqrt{r^2+z^2}=|\phi(t,\tau)|>R_1=\tfrac{R^0+5R}{6}$, then $\tau>\sqrt{\tfrac{2R^0+4R}{R^0+5R}}R_1>\tfrac{2R^0+4R}{6}$, and $|\phi(t',\tau)|>\tfrac{2R^0+4R}{R^0+5R} R_1=\tfrac{2R^0+4R}{6}$. As a consequence, we see that $S(r,z)=1$ for $\sqrt{r^2+z^2}>R_1$.

By Proposition \ref{lem: char coord}, $\phi$ is a $C^1$ diffeomorphism. Thus $S(r,z)$ is obviously $C^1$ on $\Omega$. We will now obtain H\"older estimates for the first derivatives. We only show details for the $r$-derivative because the $z$-derivative is analogous.
\begin{align}
&~S_r(r,z)\notag \\
= &~\mu e^{\frac{\mu s_0(\tau)}\gamma}s_0'(\tau)\tau_r\label{eq: S_r est 1}\\
&\quad-\kappa(\omega^2 r)_z(r,z)t_r\label{eq: S_r est 2}\\
&\quad-\kappa \tau_r\int_0^t[(\omega^2r)_{zr}(\phi(t',\tau))r_\tau(t',\tau)+(\omega^2r)_{zz}(\phi(t',\tau))z_\tau(t',\tau)]~dt'.\label{eq: S_r est 3}
\end{align}
We estimate each term one by one. First of all, 
\begin{align*}
\left|e^{\frac{\mu s_0(\tau(r_1,z_1))}{\gamma}}-e^{\frac{\mu s_0(\tau(r_2,z_2))}{\gamma}}\right|&\le C\|s_0'\|_{0}\|\nabla \tau\|_0|(r_1,z_1)-(r_2,z_2)|\\
&\le C|(r_1,z_1)-(r_2,z_2)|
\end{align*}
by \eqref{eq: grad phi 2}. Thus $e^{\frac{\mu s_0(\tau)}\gamma}\in C^{0,1}(\overline{B_R})$. By \eqref{eq: grad t tau space} and Lemma \ref{lem: w Holder prod}, we only need to bound $s_0'(\tau(r,z))$ in $C^{0,\beta}_{(-\beta)}$ in order to estimate \eqref{eq: S_r est 1}. Indeed, since $s_0'(0)=0$,
$$
(r^2+z^2)^{-\frac\beta2}|s_0'(\tau(r,z))|\le \|s_0\|_{1,\beta}\frac{|\tau(r,z)|^\beta}{(r^2+z^2)^{\frac\beta2}}\le C
$$
by \eqref{eq: tau phi comparable}, and
\begin{align*}
|s_0'(\tau(r_1,z_1))-s_0'(\tau(r_2,z_2))|&\le C\|s_0\|_{1,\beta}\|\nabla \tau\|_0^\beta|(r_1,z_1)-(r_2,z_2)|^\beta\\
&\le C|(r_1,z_1)-(r_2,z_2)|^\beta.
\end{align*}
In order to estimate \eqref{eq: S_r est 2}, we must bound  $(\omega^2 r)_z$  
in $C^{0,\beta}_{(-1-\beta)}$, as can be seen from \eqref{eq: grad t tau space} and Lemma \ref{lem: w Holder prod}. By Lemma \ref{lem: equiv Holder}, we may compute the $C^{0,\beta}_{[-1-\beta]}$ norm of $(\omega^2 r)_z$  
 instead. In fact, we show that it belongs to $C^{0,1}_{[-2]}\subset C^{0,\beta}_{[-1-\beta]}$, as follows. 
$$
|(r,z)|^{-2}  |(\omega^2 )_z (r,z)r|   
\le \|\omega^2\|_{2}\frac{ r\sqrt{r^2+z^2}}{r^2+z^2}\le C.
$$
Let $r_1^2+z_1^2\ge r_2^2+z_2^2$.
\begin{align*}
&~|(r_1,z_1)|^{-1}\frac{\left|(\omega^2)_z(r_1,z_1)r_1-(\omega^2)_z(r_2,z_2)r_2\right|}{|(r_1,z_1)-(r_2,z_2)|}\\
\le &~ (r_1^2+z_1^2)^{-1/2}\|\omega^2\|_{2}\sqrt{r_1^2+z_1^2}\\
\le &~C.
\end{align*}
Now to estimate \eqref{eq: S_r est 3}, we must bound the integral term in $C^{0,\beta}_{(-\beta)}$. Indeed,
\eqn\label{eq: D2 omega}
|(\omega^2r)_{zr}(\phi(t',\tau))|\le \|\omega^2\|_{2}|\phi(t',\tau(r,z))|\le C\tau(r,z)\le C|(r,z)|
\eeqn 
by \eqref{eq: tau phi comparable}. The same estimate holds for $(\omega^2r)_{zz}$. Together with \eqref{eq: grad phi 1}, this implies that the integral in \eqref{eq: S_r est 3} is bounded by $C|(r,z)|\le C|(r,z)|^\beta$.  

We now turn to the H\"older estimate on the integral. Denote $(t_i,\tau_i)=\phi^{-1}(r_i,z_i)$, $i=1,2$. We assume $|(r_1,z_1)|\le |(r_2,z_2)|$ and write
\begin{align}
&~\int_0^{t_1}[(\omega^2r)_{zr}(\phi(t',\tau_1))r_\tau(t',\tau_1)+(\omega^2r)_{zz}(\phi(t',\tau_1))z_\tau(t',\tau_1)]~dt'\notag\\
&~-\int_0^{t_2}[(\omega^2r)_{zr}(\phi(t',\tau_2))r_\tau(t',\tau_2)+(\omega^2r)_{zz}(\phi(t',\tau_2))z_\tau(t',\tau_2)]~dt'\notag\\
=&~\int_{t_2}^{t_1}[(\omega^2r)_{zr}(\phi(t',\tau_1))r_\tau(t',\tau_1)+(\omega^2r)_{zz}(\phi(t',\tau_1))z_\tau(t',\tau_1)]~dt'\label{eq: S_r est 4}\\
&~+\int_0^{t_2}\big[(\omega^2r)_{zr}(\phi(t',\tau_1))r_\tau(t',\tau_1)-(\omega^2r)_{zr}(\phi(t',\tau_2))r_\tau(t',\tau_2)\label{eq: S_r est 5}\\
&~\qquad +(\omega^2r)_{zz}(\phi(t',\tau_1))z_\tau(t',\tau_1)-(\omega^2r)_{zz}(\phi(t',\tau_2))z_\tau(t',\tau_2)\big]~dt'.\label{eq: S_r est 6}
\end{align}
By \eqref{eq: D2 omega} and a similar estimate on $(\omega^2 r)_{zz}$, \eqref{eq: S_r est 4} is bounded by 
$$C|(r_1,z_1)||t_1-t_2|\le C|(r_1,r_2)|\frac{|(r_1,z_1)-(r_2,z_2)|}{|(r_1,z_1)|}\le C|(r_1,z_1)-(r_2,z_2)|,$$
where we have used \eqref{eq: grad phi 2} to estimate $\nabla t$. Now we write the integrand of \eqref{eq: S_r est 5} as
\begin{align}
&~[(\omega^2r)_{zr}(\phi(t',\tau_1))-(\omega^2r)_{zr}(\phi(t',\tau_2))]r_\tau(t',\tau_2)\label{eq: S_r est 7}\\
&~+(\omega^2r)_{zr}(\phi(t',\tau_1))[r_\tau(t',\tau_1)-r_\tau(t',\tau_2)].\label{eq: S_r est 8}
\end{align}
By \eqref{eq: grad phi 1} and \eqref{eq: grad phi 2}, we deduce that \eqref{eq: S_r est 7} is bounded by 
$$\|\omega^2\|_{2,\beta}\|\phi_\tau\|_0^{1+\beta}\|\nabla\tau\|_0^\beta|(r_1,z_1)-(r_2,z_2)|^\beta\le C|(r_1,z_1)-(r_2,z_2)|^\beta.$$
Also, by \eqref{eq: D2 omega}, \eqref{eq: grad phi 4}, \eqref{eq: grad phi 2}, we obtain that \eqref{eq: S_r est 8} is bounded by 
$$C|(r_1,z_1)||\tau_1-\tau_2|^\beta\le C\|\nabla\tau\|_0^\beta|(r_1,z_1)-(r_2,z_2)|^\beta\le C|(r_1,z_1)-(r_2,z_2)|^\beta.$$ 
Thus we conclude that \eqref{eq: S_r est 5} is bounded by $C|(r_1,z_1)-(r_2,z_2)|^\beta$. Similar estimates can be obtained for \eqref{eq: S_r est 6}. We have shown that the integral in \eqref{eq: S_r est 3} is in $C^{0,\beta}_{(-\beta)}$, and we have completed the $C^{1,\beta}$ estimates of $S$ on $B_R\cap\{r>0,z>0\}$. 

Finally, in order to extend the $C^{1,\beta}$ estimates of $S$ to $B_{R}$, we must show that $S_r(0,z)=0$, $S_z(r,0)=0$. In fact, by \eqref{eq: ODE}, \eqref{eq: Jacobian}, \eqref{eq: grad tau est 1}, 
$$\tau_r(r,z) = \frac{V_r(r,z)}{V_r(\tau(r,z),0)}.$$
So for $\tau>0$
$$\tau_r(0,z)=\frac{V_r(0,z)}{V_r(\tau(0,z),0)}=0$$
by the axisymmetry of $V$. On the other hand, we obviously have $(\omega^2r)_z(0,z)=0$. The formula of $S_r$ given by \eqref{eq: S_r est 1}, \eqref{eq: S_r est 2}, \eqref{eq: S_r est 3} now shows that $S_r(0,z)=0$. The proof that $S_z(r,0)=0$ is similar.
\end{proof}

\section{Construction of the Iteration Map}\label{sec: iteration}
 Recall the definition of the mapping $\F$ in Section \ref{iteration subsection}.  
The next lemma establishes the basic properties of $\F$.
\begin{lemma} \label{lem: F well-defined}
Let $\beta\in (0,\min(q,1))$, $s_0\in C^{1,\beta}(\overline{B_R}\cap \{z=0\})$, and $\omega^2\in C^{2,\beta} (\overline {B_R})$. Then there exist $\kappa_0>0$, $\mu_0>0$ and a neighborhood $B_\delta(V^0)$ of $V^0$ in $C^{2,\beta}(\overline {B_R})$ such that $\F$ maps $B_\delta(V^0)\times \real\times (-\kappa_0,\kappa_0)\times (-\mu_0,\mu_0)$ into  a bounded subset of $C^{2,\beta}(\overline {B_R})\times \real$.  
\end{lemma}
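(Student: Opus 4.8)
The plan is to handle the two components of $\F$ separately and to keep every constant uniform over the stated parameter range; the one genuinely nontrivial ingredient, the estimate of $S$ in terms of $V$, has already been supplied by Theorem~\ref{lem: V to S map}. First I would fix $\delta>0$ small enough that Theorem~\ref{lem: V to S map}(i) and Remark~\ref{lem: support} apply to every $V$ with $\|V-V^0\|_{2,\beta}<\delta$ (extended to $B_{R+1}$ as in Section~\ref{sec: S est}), together with $\kappa_0,\mu_0>0$ satisfying $\kappa_0+\mu_0\le1$. Then $S=\mathscr S(V,\kappa,\mu)\in C^{1,\beta}(\overline{B_{R+1}})$ is well defined and, by \eqref{S near 1}, $\|S-1\|_{1,\beta}\le C(|\kappa|+|\mu|)$. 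Shrinking $\kappa_0,\mu_0$ further makes $\tfrac12\le S\le2$ on $\overline{B_R}$, so $S$ is uniformly elliptic there and $\|S\|_{1,\beta}$, $\|S^{-1}\|_{1,\beta}$ are bounded by absolute constants; also $\operatorname{supp}V_+\subset B_{(R^0+R)/2}$ by Remark~\ref{lem: support}.

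Next I would bound the data of the elliptic problem \eqref{eq: tilde V elliptic}--\eqref{eq: tilde V BC}. Recalling $\gamma<2$, we have $q=\tfrac1{\gamma-1}>1$, so $t\mapsto t_+^q$ is $C^1$ and Lipschitz on bounded intervals; composing with $V$, whose $C^{2,\beta}$ norm is bounded on $B_\delta(V^0)$, gives $\|V_+^q\|_{0,\beta}\le C$, hence $\|V_+^qS^{-1}\|_{0,\beta}\le C$. Since also $\nabla\cdot(\omega^2 r\mathbf e_r)\in C^{1,\beta}(\overline{B_R})\subset C^{0,\beta}(\overline{B_R})$ with norm controlled by $\|\omega^2\|_{2,\beta}$, the right-hand side $g:=-4\pi V_+^qS^{-1}+\kappa\nabla\cdot(\omega^2 r\mathbf e_r)$ of \eqref{eq: tilde V elliptic} lies in $C^{0,\beta}(\overline{B_R})$ with $\|g\|_{0,\beta}\le C$. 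For the Dirichlet datum in \eqref{eq: tilde V BC}, the density $V_+^qS^{-1}$ is bounded and supported in $B_{(R^0+R)/2}$, hence at distance at least $\tfrac{R-R^0}{2}>0$ from $\partial B_R$, so its Newtonian potential $\tfrac1{|\cdot|}*\tfrac{V_+^q}S$ is harmonic, indeed real-analytic, in a fixed neighborhood of $\partial B_R$, with $C^{2,\beta}(\partial B_R)$ norm bounded by $C\|V_+^qS^{-1}\|_{L^1}\le C$.

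Finally I would carry out the elliptic solve. The operator $LW:=\nabla\cdot(S\nabla W)=S\Delta W+\nabla S\cdot\nabla W$ is uniformly elliptic with $C^{0,\beta}(\overline{B_R})$ coefficients and no zeroth-order term, so by standard Schauder theory the Dirichlet problem on the ball $B_R$ has a unique solution in $C^{2,\beta}(\overline{B_R})$ obeying a global Schauder estimate whose constant depends only on $R$, $\beta$, the ellipticity bounds, and $\|S\|_{1,\beta}$ — all of which are uniformly controlled above. Since constants are annihilated by $L$, the solution $V^\#$ of \eqref{eq: tilde V elliptic}--\eqref{eq: tilde V BC} equals $\alpha+W$, where $W$ solves $LW=g$ with boundary value $\tfrac1{|\cdot|}*\tfrac{V_+^q}S$; thus $W\in C^{2,\beta}(\overline{B_R})$ with $\|W\|_{2,\beta}\le C\big(\|g\|_{0,\beta}+\|\tfrac1{|\cdot|}*\tfrac{V_+^q}S\|_{C^{2,\beta}(\partial B_R)}\big)\le C$. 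Hence $\F_1$ is well defined into $C^{2,\beta}(\overline{B_R})$ with $\|\F_1(V,\alpha,\kappa,\mu)-\alpha\|_{2,\beta}\le C$, and by \eqref{eq: tilde alpha}, $|\F_2(V,\alpha,\kappa,\mu)-\alpha|=\big|\int_{B_R}V_+^qS^{-1}-M\big|\le C$. Consequently $\F$ maps the indicated set into $C^{2,\beta}(\overline{B_R})\times\real$, and its image over any bounded range of $\alpha$ is bounded.

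The only hard input is Theorem~\ref{lem: V to S map}, which is already established; granted it, this lemma is routine, and the sole point requiring care is that the Schauder constant for $L$ be uniform over the neighborhood — this is exactly what the smallness $\|S-1\|_{1,\beta}\le C(|\kappa|+|\mu|)$ buys us, namely uniform ellipticity and uniform Hölder control of the coefficients $S$ and $\nabla S$. I would also flag explicitly the affine dependence of $V^\#$ and $\alpha^\#$ on $\alpha$: it is harmless for the iteration, but it is the reason the image is bounded only after $\alpha$ is confined to a bounded set.
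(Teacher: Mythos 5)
Your proof is correct and follows essentially the same route as the paper's: obtain $S$ from Theorem \ref{lem: V to S map} with uniform $C^{1,\beta}$ control and uniform ellipticity for small $\kappa,\mu$, then solve \eqref{eq: tilde V elliptic}--\eqref{eq: tilde V BC} by global Schauder theory (the paper cites Corollary 6.9 of \cite{GilbargTru}). Your closing observation about the affine dependence on $\alpha$ is well taken --- the image is literally bounded only after $\alpha$ is restricted to a bounded set, a point the paper's statement and proof leave implicit --- but this is harmless since the iteration keeps $\alpha$ near $\alpha^0$.
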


\begin{proof}  We let 
$B_\delta(V^0)$ be the neighborhood of $V^0$ in $C^{2,\beta}(\overline {B_R})$ specified  in Theorem \ref{lem: V to S map}. By that theorem, for each $V\in B_\delta(V^0)$ there exists a unique solution $S\in C^{1,\beta}(\overline{B_{R}})$  to  \eqref{eq: transport} and \eqref{eq: IC}. We fix such a pair $V$ and $S$. 

We have to  show that (i) the solution  $ V^\#$ to \eqref{eq: tilde V elliptic}, \eqref{eq: tilde V BC} exists, is unique and $V^\#\in  C^{2,\beta}(\overline {B_R})$ and  (ii) $\alpha^\#$ given in \eqref{eq: tilde alpha} is a well-defined finite number. The latter claim (ii) is clear since $V_+^qS^{-1}$ is integrable. To verify (i), we invoke the Schauder theory for second-order linear elliptic equations. By \eqref{S near 1}, 
if we limit $\kappa_0$ and $\mu_0$ suitably, 
then $S\in C^{1,\beta}(\overline{B_{R}})$ is bounded away from 0, the function $-4\pi V_+^{q}S^{-1}+\kappa \nabla\cdot(\omega^2 r e_r)$ belongs to $C^{0,\beta}(\overline{B_{R}})$, and  the function $\tfrac{1}{|\cdot|}*(V_+^qS^{-1})+\alpha$ belongs to $C^{2,\beta}(\overline {B_R})$.  By Corollary 6.9 of  \cite{GilbargTru}, the Dirichlet problem \eqref{eq: tilde V elliptic}, \eqref{eq: tilde V BC} has a unique solution $ V^\#\in  C^{2,\beta}(\overline {B_R})$ whose $C^{2,\beta}$ norm depends only on $\|S\|_{1,\beta}$, $\|V\|_{2,\beta}$ and $\|\omega^2\|_{2,\beta}$. 
\end{proof}

In the next lemma we estimate a typical semilinear term in a H\"older norm.  
\begin{lemma}\label{lem: Frechet}
Let $U,V\in C^1(\overline{B_R})$ and $\|V\|_1<1$. Let $g\in C^{1,\beta_1}([0,\|U\|_0+1])$ for some $0<\beta<\beta_1<1$.  Then there is a constant $C>0$ such that 
\begin{align}
\|g(U+V)-g(U)-g'(U)V\|_0&\le \|g\|_{1,\beta_1}\|V\|_0^{1+\beta_1}, \label{eq: Frechet C0}\\
\|g(U+V)-g(U)-g'(U)V\|_{0,\beta}&\le C\|g\|_{1,\beta_1}(\|U\|_1+1)^\beta \|V\|_{0,\beta}^{1+\beta_1-\beta}\label{eq: Frechet C beta}. 
\end{align}
\end{lemma}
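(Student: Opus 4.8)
The plan is to prove both inequalities by reducing to a one-variable estimate via the fundamental theorem of calculus applied along the segment from $U$ to $U+V$, together with H\"older continuity of $g'$.

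First I would establish \eqref{eq: Frechet C0}. At each point $x\in\overline{B_R}$, writing $u=U(x)$, $v=V(x)$, we have $g(u+v)-g(u)-g'(u)v=\int_0^1\left(g'(u+\theta v)-g'(u)\right)v\,d\theta$, and since $g'\in C^{0,\beta_1}$ with $|\theta v|\le|v|$ we bound the integrand by $\|g\|_{1,\beta_1}|v|^{\beta_1}\cdot|v|$; taking the supremum over $x$ and noting $|v|\le\|V\|_0$ gives the claim. (Here one uses that $U+\theta V$ takes values in $[0,\|U\|_0+1]$ since $\|V\|_1<1$, so $g$ and $g'$ are evaluated within the interval on which they are assumed regular.)

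Next I would handle the H\"older seminorm in \eqref{eq: Frechet C beta}. Set $h(x)=g(U(x)+V(x))-g(U(x))-g'(U(x))V(x)$. For two points $x,y$ I would split the difference $h(x)-h(y)$ in two complementary ways depending on the ratio of $|x-y|$ to the relevant scales, as is standard for interpolating between a sup bound and a Lipschitz-type bound. On one hand, from \eqref{eq: Frechet C0} applied pointwise, $|h(x)|+|h(y)|\le 2\|g\|_{1,\beta_1}\|V\|_0^{1+\beta_1}$. On the other hand, I would estimate $|h(x)-h(y)|$ directly using the integral representation $h=\int_0^1(g'(U+\theta V)-g'(U))V\,d\theta$: the difference of the integrands at $x$ and $y$ is controlled by (i) the H\"older modulus of $g'$ composed with $U+\theta V$ and with $U$, each of which is $C^1$ with $C^1$ norm $\le\|U\|_1+1$, contributing a factor $\|g\|_{1,\beta_1}(\|U\|_1+1)^{\beta_1}|x-y|^{\beta_1}$ times $\|V\|_0$, plus (ii) the term where $V$ itself varies, contributing $\|g\|_{1,\beta_1}\|V\|_0^{\beta_1}\cdot|V(x)-V(y)|$. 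Since $|V(x)-V(y)|\le \|V\|_{0,\beta}|x-y|^\beta$ and also $\|V\|_0\le\|V\|_{0,\beta}$, while $|x-y|^{\beta_1}\le(\mathrm{diam}\,B_R)^{\beta_1-\beta}|x-y|^\beta$ and $\|U\|_1+1\le(\|U\|_1+1)$, the direct bound takes the form $C\|g\|_{1,\beta_1}(\|U\|_1+1)^{\beta_1}\|V\|_{0,\beta}\,|x-y|^\beta$, possibly after absorbing $\|V\|_0^{\beta_1}\le 1$. Dividing by $|x-y|^\beta$ and interpolating between this bound (which is strong for small $|x-y|$) and the sup bound divided by $|x-y|^\beta$ (strong for large $|x-y|$), I obtain a bound of the form $C\|g\|_{1,\beta_1}(\|U\|_1+1)^{\theta}\|V\|_{0,\beta}^{1+\beta_1-\beta}$ with the exponent $\beta$ on $(\|U\|_1+1)$ after optimizing; one checks the bookkeeping so that the power of $\|V\|_{0,\beta}$ is exactly $1+\beta_1-\beta$.

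The main obstacle I anticipate is the careful bookkeeping of exponents in the interpolation step: one must balance the sup bound $\|V\|_0^{1+\beta_1}$ against the Lipschitz-flavored bound so that the resulting power of $\|V\|_{0,\beta}$ comes out to precisely $1+\beta_1-\beta$ and the power of $(\|U\|_1+1)$ to precisely $\beta$, rather than some larger exponent. The trick is to apply the crossover at the scale $|x-y|\sim\|V\|_{0,\beta}/(\|U\|_1+1)$ (heuristically the scale at which $V$ and its increments balance), and to verify that each regime produces exactly the claimed bound. The composition estimates — that $g'\circ(U+\theta V)$ is H\"older with the stated constant uniformly in $\theta$ — are routine given $\|V\|_1<1$, since then $U+\theta V$ stays in the interval where $g'\in C^{0,\beta_1}$ and has $C^1$ norm controlled by $\|U\|_1+1$.
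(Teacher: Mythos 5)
Your overall strategy is the same as the paper's: represent the remainder as $h=I\cdot V$ with $I=\int_0^1[g'(U+\theta V)-g'(U)]\,d\theta$, get the sup bound $\|I\|_0\le\|g\|_{1,\beta_1}\|V\|_0^{\beta_1}$ (which gives \eqref{eq: Frechet C0}), and then interpolate between a $C^{0,\beta_1}$-type increment bound and the sup bound to get \eqref{eq: Frechet C beta}. The $C^0$ part is fine. In the H\"older part, however, one step would fail as literally written: you degrade the increment bound $C\|g\|_{1,\beta_1}(\|U\|_1+1)^{\beta_1}\|V\|_0\,|x-y|^{\beta_1}$ to a bound of the form $C\,|x-y|^{\beta}$ via $|x-y|^{\beta_1}\le(\mathrm{diam}\,B_R)^{\beta_1-\beta}|x-y|^{\beta}$ \emph{before} interpolating. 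After dividing by $|x-y|^{\beta}$ that bound is constant in $|x-y|$, so it is no longer ``strong for small $|x-y|$''; taking its minimum with $\|V\|_0^{1+\beta_1}|x-y|^{-\beta}$ then yields only $C\|g\|_{1,\beta_1}(\|U\|_1+1)^{\beta_1}\|V\|_{0,\beta}$, which misses the crucial factor $\|V\|_{0,\beta}^{\beta_1-\beta}$ (and carries the wrong power of $\|U\|_1+1$). That extra factor is precisely what makes the Newton remainder superlinear in Lemmas \ref{lem: bounded high norm} and \ref{lem: contraction}, so it cannot be discarded.

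The fix is one line: keep the exponent $\beta_1$. With the two competing bounds $A|x-y|^{\beta_1-\beta}$ and $B|x-y|^{-\beta}$ after division by $|x-y|^{\beta}$, where $A=C\|g\|_{1,\beta_1}(\|U\|_1+1)^{\beta_1}\|V\|_0$ and $B=2\|g\|_{1,\beta_1}\|V\|_0^{1+\beta_1}$, the crossover at $|x-y|\sim\|V\|_0/(\|U\|_1+1)$ (essentially the scale you name) gives $A^{\beta/\beta_1}B^{(\beta_1-\beta)/\beta_1}=C\|g\|_{1,\beta_1}(\|U\|_1+1)^{\beta}\|V\|_0^{1+\beta_1-\beta}$, exactly the claimed exponents; the term where $V$ itself varies is handled separately and needs no interpolation, since $\|V\|_0^{\beta_1}[V]_{0,\beta}\le\|V\|_{0,\beta}^{1+\beta_1-\beta}$ because $\|V\|_0<1$. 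The paper organizes this slightly more cleanly by interpolating only the seminorm $[I]_{0,\beta}$ between \eqref{eq: I Holder 1} and \eqref{eq: I Holder 2} and then applying the product rule $[IV]_{0,\beta}\le[I]_{0,\beta}\|V\|_0+\|I\|_0[V]_{0,\beta}$, so that $V$ never enters the interpolation at all.
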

\begin{proof}
Since $g\in C^{1}$, we can write
\begin{align}
[g(U+V)-g(U)-g'(U)V](x)&=\int_0^1[g'(U+tV)-g'(U)](x)~dt\cdot V(x)\notag\\
:&=I(x)\cdot V(x).
\end{align}
Since $\|U+tV\|_0\le \|U\|_0+1$ for $\|V\|_1<1$, we have
\begin{align}
\|I\|_0\le \|g\|_{1,\beta_1}\|V\|_0^{\beta_1},
\end{align}
so that \eqref{eq: Frechet C0} follows.
Also
\begin{align}
[g(U+V)-g(U)-g'(U)V]_{0,\beta}&\le [I]_{0,\beta}\|V\|_0+\|I\|_0[V]_{0,\beta}\notag\\
&\le [I]_{0,\beta}\|V\|_{0,\beta}+\|g\|_{1,\beta_1}\|V\|_0^{\beta_1}[V]_{0,\beta}.\label{eq: g Holder 1}
\end{align}
To estimate $[I]_{0,\beta}$, we compute
\begin{align}
|I(x)-I(y)|&=\bigg|\int_0^1[g'(U(x)+tV(x))-g'(U(y)+tV(y))]~dt\notag\\
&\qquad - [g'(U(x))-g'(U(y))]\bigg|\notag\\
&\le \|g\|_{1,\beta_1}\left(|U(x)+tV(x)-U(y)-tV(y)|^{\beta_1}+|U(x)-U(y)|^{\beta_1}\right)\notag\\
&\le C\|g\|_{1,\beta_1}(\|U\|_1+\|V\|_1)^{\beta_1}|x-y|^{\beta_1}. \label{eq: I Holder 1}
\end{align}
On the other hand, 
\eqn\label{eq: I Holder 2}
|I(x)-I(y)|\le 2\|I\|_0\le 2\|g\|_{1,\beta_1}\|V\|_0^{\beta_1}.
\eeqn
Since $\beta<\beta_1$, we can interpolate between \eqref{eq: I Holder 1} and \eqref{eq: I Holder 2} to get
\eqn
|I(x)-I(y)|\le C\|g\|_{1,\beta_1}(\|U\|_1+\|V\|_1)^\beta|x-y|^\beta\|V\|_0^{\beta_1-\beta}.
\eeqn
Thus
\eqn
[I]_{0,\beta}\le C\|g\|_{1,\beta_1}(\|U\|_1+1)^\beta\|V\|_0^{\beta_1-\beta},
\eeqn
and \eqref{eq: Frechet C beta} follows from \eqref{eq: g Holder 1}.
\end{proof}

Next we show that the mapping $\F$ is  Fr\'echet differentiable and also that $I-D\F$ is invertible in case  $\kappa=\mu=0$. 

\begin{lemma}\label{lem: Lambda invertible}
Let $\tfrac65<\gamma<2$, $\gamma\ne \tfrac43$. Fix $\beta\in\left(0, \min\left(q-1,1\right)\right)$. Then $\F(V,\alpha,0,0)$ is Fr\'echet differentiable with respect to  $(V,\alpha)$ on $C^{2,\beta}(\overline {B_R})\times \real$, and 
\eqn\label{eq: Lambda}
\Lambda = I-D_{(V,\alpha)}\F(V^0,\alpha^0,0,0)
\eeqn 
is invertible with a bounded inverse. Furthermore, $\Lambda^{-1}$ is also bounded in the $C^{1,\beta}(\overline {B_R})\times \real$ norm, that is, 
\eqn
\|\Lambda^{-1}(\delta V,\delta\alpha)\|_{C^{1,\beta}(\overline {B_R})\times \real}\le \|(\delta V,\delta\alpha)\|_{C^{1,\beta}(\overline {B_R})\times \real}
\eeqn
for all $(\delta V,\delta\alpha)\in C^{2,\beta}(\overline {B_R})\times \real$.
\end{lemma}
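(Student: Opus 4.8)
The plan is to reduce the statement to the Fredholm alternative on $C^{2,\beta}(\overline{B_R})\times\real$. First I would use that, when $\kappa=\mu=0$, the transport problem \eqref{eq: transport}, \eqref{eq: IC} has the unique solution $S\equiv1$, i.e. $\mathscr S(V,0,0)\equiv1$ for every admissible $V$. Then, exactly as in the harmonic-function argument of Lemma \ref{lem: div curl}, the function $V^\#-\frac1{|\cdot|}*V_+^q$ is harmonic in $B_R$ and, by \eqref{eq: tilde V BC}, equals $\alpha$ on $\partial B_R$, hence is identically $\alpha$; therefore
\[
\F_1(V,\alpha,0,0)=\frac1{|\cdot|}*V_+^q+\alpha\ \text{ on }\overline{B_R},\qquad \F_2(V,\alpha,0,0)=\alpha+\int_{B_R}V_+^q-M,
\]
so $\F(\cdot,\cdot,0,0)$ is defined on all of $C^{2,\beta}(\overline{B_R})\times\real$ and its only nonlinearity is the Nemytskii map $V\mapsto V_+^q$.

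For Fréchet differentiability I would invoke Lemma \ref{lem: Frechet} with $g(t)=t_+^q$, which lies in $C^{1,\beta_1}([0,T])$ for every $T>0$ and every $\beta_1\in(\beta,\min(q-1,1))$ (a nonempty interval since $\beta<\min(q-1,1)$). Combining \eqref{eq: Frechet C0} and \eqref{eq: Frechet C beta} with the Newtonian potential estimate $\|\frac1{|\cdot|}*\rho\|_{2,\beta}\le C\|\rho\|_{0,\beta}$ for $\rho$ supported in $\overline{B_R}$ (\cite{GilbargTru}), and noting that the exponent $1+\beta_1-\beta$ in \eqref{eq: Frechet C beta} exceeds $1$, one sees that
\[
\F(V+\delta V,\alpha+\delta\alpha,0,0)-\F(V,\alpha,0,0)-\left(\tfrac1{|\cdot|}*(qV_+^{q-1}\delta V)+\delta\alpha,\ \delta\alpha+\int_{B_R}qV_+^{q-1}\delta V\right)
\]
has $C^{2,\beta}(\overline{B_R})\times\real$ norm $o(\|\delta V\|_{2,\beta})$ (the $\delta\alpha$-contributions are linear and cancel). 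Hence $\F(\cdot,\cdot,0,0)$ is Fréchet differentiable, and writing $n:=q(V^0)_+^{q-1}$ we obtain
\[
D\F^0(\delta V,\delta\alpha)=\left(\tfrac1{|\cdot|}*(n\,\delta V)+\delta\alpha,\ \delta\alpha+\int_{B_R}n\,\delta V\right),\qquad \Lambda(\delta V,\delta\alpha)=\left(\delta V-\tfrac1{|\cdot|}*(n\,\delta V)-\delta\alpha,\ -\int_{B_R}n\,\delta V\right).
\]

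Next I would show $\Lambda=I-D\F^0$ is Fredholm of index zero, so its invertibility reduces to injectivity. By Lemma \ref{lem: LE}, $V^0\in C^{3,\beta_0}(\real^3)$ with $\beta_0=\min(q-1,1)>\beta$, hence $n\in C^{0,\beta_0}$; so $\delta V\mapsto n\,\delta V$ maps bounded subsets of $C^{2,\beta}(\overline{B_R})$ into bounded subsets of $C^{0,\beta_0}(\overline{B_R})$, and convolution with $\frac1{|\cdot|}$ then yields bounded subsets of $C^{2,\beta_0}(\overline{B_R})$, which embed compactly into $C^{2,\beta}(\overline{B_R})$; the remaining pieces of $D\F^0$ are of finite rank, so $D\F^0$ is compact. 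For injectivity, suppose $\Lambda(\delta V,\delta\alpha)=0$, that is $\delta V=\frac1{|\cdot|}*(n\,\delta V)+\delta\alpha$ in $B_R$ and $\int_{B_R}n\,\delta V=0$. The first equation is the linearized Lane--Emden equation, whose solution space is classical via a spherical-harmonic decomposition and the reduction to the radial Emden ODEs (cf. \cite{Heilig,SW17}): since all functions here are axisymmetric and even in $z$, only the even Legendre modes $\ell=0,2,4,\dots$ occur, the modes $\ell\ge2$ vanish by a Sturm--Liouville comparison that uses $\tfrac65<\gamma$, and the $\ell=1$ translation modes $\partial_{x_j}V^0$ do not arise in this symmetry class at all; the radial ($\ell=0$) part of the unconstrained kernel is one-dimensional, spanned by the scaling mode $\Psi=\tfrac2{q-1}V^0+x\cdot\nabla V^0$ (paired with $\delta\alpha=\tfrac2{q-1}\alpha^0$), obtained by differentiating the Lane--Emden family $V_\lambda(x)=\lambda^{2/(q-1)}V^0(\lambda x)$ at $\lambda=1$. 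The mass constraint eliminates this last mode because $\int_{B_R}n\,\Psi=\frac{d}{d\lambda}\big|_{\lambda=1}\int(V_\lambda)_+^q\,dx=\frac{3-q}{q-1}M\ne0$ precisely when $q\ne3$, i.e. $\gamma\ne\tfrac43$; hence $\delta V\equiv0$ and then $\delta\alpha=0$, so $\Lambda$ is injective, therefore boundedly invertible.

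Finally, the last assertion follows by running the same argument in $C^{1,\beta}(\overline{B_R})\times\real$: there $D\F^0$ is again identity-minus-compact, and any element of $\ker\Lambda$ lying in $C^{1,\beta}$ is automatically in $C^{2,\beta}$ (elliptic regularity for $\delta V=\frac1{|\cdot|}*(n\,\delta V)+\delta\alpha$), so the kernel is still trivial; thus $\Lambda$ is invertible on $C^{1,\beta}(\overline{B_R})\times\real$, its inverse is bounded there and restricts on $C^{2,\beta}(\overline{B_R})\times\real$ to $\Lambda^{-1}$, which yields the claimed estimate. I expect the principal difficulty to be the injectivity step: the Sturm--Liouville analysis of the modes $\ell\ge2$, where the range $\tfrac65<\gamma<2$ is essential, must be carried out or quoted from the Lane--Emden perturbation literature. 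The genuinely new point, by contrast, is the removal of the remaining radial scaling mode via the prescribed-mass constraint, which operates exactly because $\gamma\ne\tfrac43$, together with the $z$-evenness that discards the $\ell=1$ modes.
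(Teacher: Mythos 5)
Your overall architecture coincides with the paper's: at $\kappa=\mu=0$ one has $S\equiv 1$, and the harmonic-function reduction gives the explicit form $\F_1=\frac1{|\cdot|}*V_+^q+\alpha$ on all of $B_R$; Fr\'echet differentiability follows from Lemma \ref{lem: Frechet}; compactness of $D\F^0$ makes $\Lambda$ Fredholm of index zero, reducing invertibility to injectivity; the radial scaling mode is removed by the mass constraint precisely because $\frac{d}{d\lambda}\big|_{\lambda=1}\int (V_\lambda)_+^q\,dx=\frac{3-q}{q-1}M\ne0$ when $q\ne3$; and the $C^{1,\beta}$ assertion follows by re-running the Fredholm argument in the weaker space and bootstrapping any kernel element back to $C^{2,\beta}$. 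All of this matches the paper's proof step for step.

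The one substantive step you have not supplied is the one you yourself identify as the principal difficulty: the vanishing of the nonradial components $(\delta V)_{lm}$, $l\ge 1$. You assert it follows from ``a Sturm--Liouville comparison that uses $\frac65<\gamma$'' to be ``carried out or quoted from the Lane--Emden perturbation literature''; as written this is a pointer, not an argument, and it is exactly where the paper does its real work. The paper's route is: extend $\delta V$ to $\real^3$ by the integral equation, so that $(\delta V)_{lm}=C|x|^{-(l+1)}$ outside $B_{R^0}$ as in \eqref{eq: delta V outside}; form the ratio $\psi_{lm}=(\delta V)_{lm}/(V^0)'$ of \eqref{def: psi}, which satisfies \eqref{eq: psi 1} with zeroth-order coefficient of favorable sign for all $l\ge1$ and tends to $0$ at the origin by \eqref{eq: psi at zero} (this is where the axisymmetry and $z$-evenness enter, through $\nabla(\delta V)(0)=0$; in particular the paper treats $l=1$ together with $l\ge2$ rather than discarding it separately by parity as you do); then combine the strong maximum principle, the Hopf lemma at $|x|=R^0$, and the explicit exterior formula to force $\psi_{lm}\equiv0$. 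Some such argument --- either this maximum-principle one or a genuine Sturm--Liouville/oscillation argument as in \cite{SW17,SW19} --- must be included for your proof to be complete. Everything else in the proposal is correct and essentially identical to the paper's proof.
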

\begin{proof}
It is easy to see from \eqref{eq: transport} and \eqref{eq: IC} that $S\equiv 1$ in case $\kappa=\mu=0$. Thus by \eqref{eq: tilde V elliptic}, \eqref{eq: tilde V BC}, \eqref{eq: tilde alpha}, $( V^\#, \alpha^\#) = \mathcal F(V,\alpha,0,0)$ solves 
\begin{align}
\Delta V^\#&=-4\pi V_+^q \quad \text{ on }B_R,\label{eq: isentropic V tilde elliptic}\\
 V^\# &= \frac1{|\cdot|}*V_+^q+\alpha \quad \text{ on }\partial B_R,\label{eq: isentropic V tilde BC}\\
\alpha^\# &= \alpha+\int_{B_R} V_+^q~dx-M. \label{eq: isentropic alpha}
\end{align}
From \eqref{eq: isentropic V tilde elliptic}, \eqref{eq: isentropic V tilde BC}, it follows  that $  V^\# - \frac1{|\cdot|}*V_+^q-\alpha$ is harmonic on $B_R$ with zero boundary values. Thus 
\eqn
 V^\# = \frac1{|\cdot|}*V_+^q+\alpha
\eeqn
actually holds in the whole ball $B_R$. It is then not hard to show using Lemma \ref{lem: Frechet} that $(V,\alpha)\mapsto( V^\#, \alpha^\#)$ is Fr\'echet differentiable with 
\eqn\label{eq: F deriv 1}
 D_{(V,\alpha)} V^\#(\delta V,\delta \alpha)=\frac{q}{|\cdot|}*\left(V_+^{q-1}\delta V\right)+\delta\alpha,
\eeqn
\eqn\label{eq: F deriv 2}
D_{(V,\alpha)} \alpha^\#(\delta V,\delta \alpha)=\delta\alpha+q\int_{B_R}V_+^{q-1}\delta V~dx .
\eeqn
It is also easy to see that the Fr\'echet derivative of $(V,\alpha)\mapsto(V^\#,\alpha^\#)$ is compact on $C^{2,\beta}(\overline{B_R})\times \real$, so that $\Lambda$ defined by \eqref{eq: Lambda} is Fredholm of index zero. As for invertibility, we only need to prove its injectivity. So assume that $(\delta V,\delta\alpha)$ satisfies $\Lambda(\delta V,\delta \alpha)=0$.  That is, 
\eqn\label{eq: delta V Lane Emden}
\delta V = \frac{q}{|\cdot|}*\left((V^0)_+^{q-1}\delta V\right)+\delta\alpha,
\eeqn
\eqn\label{eq: delta V Lane Emden 2} 
\int_{B_R}(V^0)_+^{q-1}\delta V~dx=0.
\eeqn
We must prove that these equations admit only the zero solution.  This will follow in a way similar to Lemma 4.3 of \cite{SW19}; the argument is also related to section 4 of \cite{SW17}. 

In fact, we extend $\delta V$ to the entire $\real^3$ by \eqref{eq: delta V Lane Emden}. It follows that
\eqn\label{eq: delta V 1}
\Delta (\delta V) = -4\pi q(V^0)_+^{q-1}\delta V.
\eeqn
Let $Y_{lm}(\theta)$ ($l=0,1,2,\dots$, $m=-l,\dots,l$, $\theta\in \mathbb{S}^2$) be the standard spherical harmonics. For any function $f(x)$ with $x\in \real^3$, denote its $(l,m)$ component by 
$$f_{lm}(|x|)=\langle f,Y_{lm}\rangle(|x|) = \int_{\mathbb S^2}f(|x|\theta)Y_{lm}(\theta)~d_{\mathbb S^2}\theta.$$
We will first show $(\delta V)_{lm}=0$ if $l\ge 1$, so that only a radial component remains. Indeed, taking the $(l,m)$ component of \eqref{eq: delta V 1}, and using 
$$
\Delta = \partial_{|x|}^2+\frac{2}{|x|}\partial_{|x|}+\frac1{|x|^2}\Delta_{\mathbb S^2},
$$
$$
\int_{\mathbb S^2}(\Delta_{\mathbb S^2}f)g~d_{\mathbb S^2}\theta=\int_{\mathbb S^2}f(\Delta_{\mathbb S^2}g)~d_{\mathbb S^2}\theta,
$$
$$
\Delta_{\mathbb S^2} Y_{lm} = -l(l+1)Y_{lm},
$$
we obtain 
\eqn\label{eq: delta V 2}
\Delta[ (\delta V)_{lm}]-\frac{l(l+1)}{|x|^2}(\delta V)_{lm}=-4\pi q(V^0)_+^{q-1}(\delta V)_{lm}.
\eeqn
Since $(V^0)_+=0$ for $|x|>R^0$, we can solve \eqref{eq: delta V 2} explicitly there and obtain
\eqn
(\delta V)_{lm}(x)=C|x|^{-(l+1)}+D|x|^l.
\eeqn
Since $\delta V$ is bounded near infinity, we obtain
\eqn\label{eq: delta V outside}
(\delta V)_{lm}(x)=C|x|^{-(l+1)}\quad \text{ if }|x|\ge R^0.
\eeqn
By Lemma \ref{lem: LE}, $(V^0)'=\partial_{|x|}V^0<0$ for $|x|>0$. We define for $|x|>0$
\eqn\label{def: psi}
\psi_{lm} = \frac{(\delta V)_{lm}}{(V^0)'}.
\eeqn
Note that for $l\ge 1$, 
\begin{align*}
|\psi_{lm}(x)| & := \left|\frac1{(V^0)'(x)}\int_{\mathbb S^2}\left[\delta V(|x|\theta)-\delta V(0)\right]Y_{lm}(\theta)~d_{\mathbb S^2}\theta\right|\\
&\le \frac{C|x|}{|(V^0)'(x)|}\sup_{|y|\le |x|}|\nabla (\delta V)(y)|.
\end{align*}
Inequality \eqref{eq: V0 near 0} and $\nabla (\delta V)(0)=0$ imply that
\eqn\label{eq: psi at zero}
\lim_{|x|\to 0^+}\psi_{lm}(x)=0.
\eeqn
By \eqref{eq: LE1}, 
\eqn\label{eq: V0 LE}
\Delta V^0=-4\pi (V^0)_+^q,
\eeqn
from which it follows that
\eqn\label{eq: V0 1}
\Delta [(V^0)']-\frac2{|x|^2}(V^0)'=-4\pi q(V^0)^{q-1}(V^0)'.
\eeqn
From \eqref{eq: delta V 2}, \eqref{def: psi}, \eqref{eq: V0 1} we get
\eqn\label{eq: psi 1}
\Delta \psi_{lm}+\frac{2\nabla(V^0)'\cdot \nabla \psi_{lm}}{(V^0)'}+\frac{2-l(l+1)}{|x|^2}(V^0)'\psi_{lm}=0.
\eeqn
If $l\ge 1$, then $2-l(l+1)\le 0$.  Letting $\Psi = \sup_{0<|x|<R^0}\psi_{lm}(x)>0$ and using the strong maximum principle on \eqref{eq: psi 1}, we know that $\Psi$ cannot be attained at any interior point on the punctured ball $0<|x|<R^0$. By the  condition \eqref{eq: psi at zero} at the origin, $\Psi$ can only be attained for $|x|=R^0$, so that $\psi_{lm}(R^0)=\Psi$. By the Hopf maximum principle, $\partial_{|x|}\psi_{lm}(R^0)>0$.  Now the $C^1$ continuity of $\psi_{lm}$ across the surface at $|x|=R^0$ and \eqref{eq: delta V outside}, \eqref{def: psi}, \eqref{eq: V0 LE} implies 
\eqn
0<\Psi=\psi_{lm}(R^0)=\frac{C}{(R^0)^{l+1}(V^0)'(R^0)},
\eeqn
\eqn
0<\partial_{|x|}\psi_{lm}(R^0)=\frac{C(1-l)}{(R^0)^{l+2}(V^0)'(R^0)}.
\eeqn
These equations force $1-l>0$, which contradicts the assumption that $l\ge 1$. We therefore conclude that $\Psi = \sup_{0<|x|<R^0}\psi_{lm}(x)\le0$. By similar reasoning, $\inf_{0<|x|<R^0}\psi_{lm}(x)\ge0$. It follows that $\psi_{lm}$, as well as $(\delta V)_{lm}$, vanish  on $B_{R^0}$. \eqref{eq: delta V outside} now implies $\delta V_{lm}=0$ everywhere.

Since $\delta V$ is now restricted to be a radial function, \eqref{eq: delta V 1} can be regarded as the following ODE (using $'$ to denote $\pa_{|x|}$):
\eqn
(\delta V)''+\frac2{|x|}(\delta V)' +4\pi q(V^0)_+^{q-1}\delta V =0.
\eeqn
We also have the obvious condition
\eqn
(\delta V)'(0)=0
\eeqn
due to symmetry. On the other hand, we note that by scaling symmetry of \eqref{eq: V0 LE}, $V(x;a) = a^{\frac2{q-1}}V^0(ax)$ solves
\eqn
\Delta V+4\pi V_+^q=0.
\eeqn
It follows that $U=\partial_a V(x;1)$ solves
$$
\Delta U+4\pi q(V^0)_+^{q-1}U=0,
$$
or
\eqn
U''+\frac2{|x|}U'+4\pi q(V^0)_+^{q-1}U=0,
\eeqn
with 
\eqn
U'(0)=0.
\eeqn
So $\delta V$ and $U$ satisfy the same ODE with vanishing derivative at zero. By uniqueness,  
it follows that $\delta V = CU$ for some constant multiple $C$. 
Also,  $V(x;a)$ satisfies 
\eqn
\int_{\real^3} V_+^q~dx =a^{\frac{3-q}{q-1}}\int_{\real^3}(V^0)_+^q~dx.
\eeqn
Taking the derivative with respect to $a$ and setting $a=1$, we get
\eqn
\int_{\real^3}q(V^0)_+^{q-1}U~dx = \frac{3-q}{q-1}\int_{\real^3}(V^0)_+^q~dx\ne 0
\eeqn
if $q\ne 3$. In this case, \eqref{eq: delta V Lane Emden 2} and the condition $\delta V = CU$ obtained above imply that $C=0$.   Hence $\delta V$ and $\delta \alpha$ are zero. The proof is complete as we observe that $q=3$ if and only if $\gamma=\tfrac43$.

It remains to reprove the lemma in the weaker space $C^{1,\beta}(\overline{B_R})$. Observe that the right hand sides of \eqref{eq: F deriv 1} and \eqref{eq: F deriv 2} are well-defined for $\delta V\in C^{1,\beta}(\overline{B_R})$, so that we can extend the definition of $\Lambda$ to $C^{1,\beta}(\overline {B_R})\times \real$. Once again $\Lambda$ is Fredholm with index zero on $C^{1,\beta}(\overline {B_R})\times \real$. Thus we only need to show that \eqref{eq: delta V Lane Emden}, \eqref{eq: delta V Lane Emden 2} has unique zero solution assuming merely that $\delta V\in C^{1,\beta}(\overline{B_R})$. 
But due to the gain of regularity of the right hand side of \eqref{eq: delta V Lane Emden}, we recover $\delta V\in C^{2,\beta}(\overline{B_R})$. The result thus follows.
\end{proof}

We define the sequence of approximate solutions $(V_n, \alpha_n)$ by 
\begin{align}
(V_0,\alpha_0)&=(V^0,\alpha^0),\label{eq: initialization}\\
(V_{n+1},\alpha_{n+1})&=(V_n,\alpha_n)-\Lambda^{-1}[(V_n,\alpha_n)-\F(V_n,\alpha_n,\kappa,\mu)].\label{eq: iteration}
\end{align}
Here \eqref{eq: initialization} simply means that we use the Lane-Emden solution as the zeroth step in the iteration.

\section{Convergence of Iterations and Uniqueness}\label{sec: convergence}
In this section, we fix $\beta<\beta_1:=\min(q-1,1)$ and will prove convergence of the iteration sequence in $C^{2,\beta'}(\overline{B_R})\times \real$ for every $0<\beta'<\beta$. The argument consists mainly of two steps. In the first step we show that all $(V_n,\alpha_n)$ remain in a small neighborhood of $(V^0,\alpha^0)$ in the $C^{2,\beta}(\overline{B_R})\times \real$ norm, so that the next term in the sequence is always well-defined by applying Lemma \ref{lem: F well-defined}. In the second step we show that the $C^{1,\beta}(\overline{B_R})\times \real$ norm of $(V_{n+1}-V_n,\alpha_{n+1}-\alpha_n)$ contracts, so that the iterates form a Cauchy sequence in the low norm. We then use  interpolation in the hierarchy of H\"older spaces to prove convergence.

To avoid clutter in the equations, we further simplify the notation as follows. Denote
$$g(V)=V_+^q$$
and 
$$
\mathcal F(V,\alpha,\kappa,\mu)\text{ by }\F(V,\alpha), \quad \mathcal F(V,\alpha,0,0)\text{ by }\F^0(V,\alpha).
$$
Recall that the Lane-Emden solution $(V^0,\alpha^0)$ is a fixed point: $\F^0(V^0,\alpha^0)=(V^0,\alpha^0)$. As before, we also use the notation $(V^\#,\alpha^\#)=\F(V,\alpha)$.
Moreover, we denote
$$D_{(V,\alpha)}\F(V^0,\alpha^0,0,0) \text{ by }D\F^0 $$
and 
$$(\delta V^\dagger,\delta \alpha^\dagger)=D\F^0(\delta V,\delta \alpha).$$
From Lemma \ref{lem: Lambda invertible} and \eqref{eq: isentropic V tilde elliptic}, \eqref{eq: isentropic V tilde BC}, \eqref{eq: isentropic alpha} we may write
\begin{align}
\Delta (\delta V^\dagger)&=-4\pi g'(V^0)\delta V \quad \text{ on }B_R, \label{eq: dagger 1}\\
\delta V^\dagger &= \frac{1}{|\cdot|}*(g'(V^0)\delta V)+\delta\alpha \quad \text{ on }\partial B_R, \label{eq: dagger 2}\\
\delta \alpha^\dagger& = \delta\alpha+\int_{B_R}g'(V^0)\delta V~dx.
\end{align}
Thus we use $\#$ to denote components of the nonlinear mapping $\F$, and $\dagger$ to denote components of the linearized operator $D\F^0$. We also denote the exponentiated entropy $S$ constructed in Theorem \ref{lem: V to S map} by $\mathscr{S}(V)$ to emphasize its dependence on $V$. Finally, we denote by $X$ the space $C^{2,\beta}(\overline{B_R})\times \real$.

\subsection{Boundedness in the High Norm}
We now prove the uniform boundedness of the $X$ norm of the iterates.
\begin{lemma}\label{lem: bounded high norm}
For all $\epsilon>0$, $\epsilon_1>0$ sufficiently small, if $\|(V_n,\alpha_n)-(V^0,\alpha^0)\|_X<\epsilon$ and $|\kappa|+|\mu|<\epsilon_1$, then $\|(V_{n+1},\alpha_{n+1})-(V^0,\alpha^0)\|_X<\epsilon$.
\end{lemma}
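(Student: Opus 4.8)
The plan is to recast the Newton step \eqref{eq: iteration} in the form \eqref{eq: fixed point 2} and estimate its right-hand side directly. Writing $(\delta V,\delta\alpha)=(V_n,\alpha_n)-(V^0,\alpha^0)$ and using $\F^0(V^0,\alpha^0)=(V^0,\alpha^0)$ together with $\Lambda=I-D\F^0$, a short algebraic manipulation gives
\[
(V_{n+1},\alpha_{n+1})-(V^0,\alpha^0)=\Lambda^{-1}\Big\{\big[\F(V_n,\alpha_n)-\F^0(V^0,\alpha^0)\big]-D\F^0(\delta V,\delta\alpha)\Big\}.
\]
Since $\|\Lambda^{-1}\|$ is a fixed finite constant on $X$ by Lemma \ref{lem: Lambda invertible}, it suffices to bound the braced term in $X$ by a quantity strictly below $\epsilon/\|\Lambda^{-1}\|$. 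First I would split it as
\[
\big[\F(V_n,\alpha_n)-\F^0(V_n,\alpha_n)\big]+\big[\F^0(V_n,\alpha_n)-\F^0(V^0,\alpha^0)-D\F^0(\delta V,\delta\alpha)\big],
\]
where the first bracket isolates the effect of switching on the parameters $\kappa,\mu$, and the second is the Taylor remainder of the highly regular constant-entropy map $\F^0$ at the Lane--Emden point.

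For the second bracket, recall that when $\kappa=\mu=0$ one has $\mathscr S\equiv 1$, so the $V$-component of $\F^0$ is $\tfrac1{|\cdot|}*g(V)+\alpha$, its $\alpha$-component is $\alpha+\int_{B_R}g(V)\,dx-M$, and $D\F^0$ replaces $g(V)$ by $g'(V^0)\delta V$ (cf. \eqref{eq: isentropic V tilde elliptic}--\eqref{eq: isentropic V tilde BC}, \eqref{eq: F deriv 1}, \eqref{eq: F deriv 2}; the harmonic correction vanishes, so these identities hold throughout $B_R$). Hence the second bracket equals $\big(\tfrac1{|\cdot|}*h,\ \int_{B_R}h\,dx\big)$ with $h=g(V_n)-g(V^0)-g'(V^0)\delta V$, and Lemma \ref{lem: Frechet} (applied with $U=V^0$, $\beta_1=\min(q-1,1)$, using $\|\delta V\|_1<1$) yields
\[
\|h\|_{0,\beta}\le C(\|V^0\|_1+1)^\beta\,\|\delta V\|_{0,\beta}^{\,1+\beta_1-\beta}\le C\epsilon^{\,1+\beta_1-\beta}.
\]
Because Newtonian potential theory (as in Lemma \ref{lem: F well-defined}) maps $C^{0,\beta}(\overline{B_R})$ boundedly into $C^{2,\beta}(\overline{B_R})$, and since $\beta<\beta_1$ makes the exponent $1+\beta_1-\beta$ exceed $1$, the second bracket is $O\big(\epsilon^{1+\beta_1-\beta}\big)=o(\epsilon)$ in $X$, and can be made $<\epsilon/(2\|\Lambda^{-1}\|)$ by choosing $\epsilon$ small. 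This super-linear scaling is exactly the mechanism — as in the contraction step of the implicit function theorem — by which the ball $\|\cdot\|_X<\epsilon$ is preserved.

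For the first bracket, set $S=\mathscr S(V_n)$, $(V^\#,\alpha^\#)=\F(V_n,\alpha_n)$, $(V^{\#,0},\alpha^{\#,0})=\F^0(V_n,\alpha_n)$. Theorem \ref{lem: V to S map} gives $\|S-1\|_{C^{1,\beta}}\le C(|\kappa|+|\mu|)\le C\epsilon_1$, and after shrinking $\epsilon_1$ the function $S$ is bounded away from $0$. Subtracting \eqref{eq: tilde V elliptic}--\eqref{eq: tilde V BC} from \eqref{eq: isentropic V tilde elliptic}--\eqref{eq: isentropic V tilde BC}, the difference $w=V^\#-V^{\#,0}$ solves the Dirichlet problem
\[
\nabla\cdot(S\nabla w)=-4\pi (V_n)_+^q(S^{-1}-1)+\kappa\nabla\cdot(\omega^2 r\mathbf e_r)-\nabla\cdot\big((S-1)\nabla V^{\#,0}\big)\quad\text{in }B_R,
\]
\[
w=\frac1{|\cdot|}*\big((V_n)_+^q(S^{-1}-1)\big)\quad\text{on }\partial B_R.
\]
Using $\|S-1\|_{1,\beta}\le C\epsilon_1$, the uniform $C^{2,\beta}$ bound on $V^{\#,0}$ from Lemma \ref{lem: F well-defined}, and $\omega^2\in C^{2,\beta}$, the forcing is $O(\epsilon_1)$ in $C^{0,\beta}(\overline{B_R})$ and the boundary datum is $O(\epsilon_1)$ in $C^{2,\beta}(\overline{B_R})$. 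Since $S$ stays uniformly close to $1$, the Schauder estimate for $\nabla\cdot(S\nabla\,\cdot\,)$ holds with a constant independent of $V_n$, so $\|w\|_{2,\beta}\le C\epsilon_1$; and $\alpha^\#-\alpha^{\#,0}=\int_{B_R}(V_n)_+^q(S^{-1}-1)\,dx=O(\epsilon_1)$. Hence the first bracket is $\le C\epsilon_1$ in $X$, which is $<\epsilon/(2\|\Lambda^{-1}\|)$ once $\epsilon_1$ is small depending on $\epsilon$.

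Combining the two bounds gives $\|(V_{n+1},\alpha_{n+1})-(V^0,\alpha^0)\|_X<\epsilon$, provided one first fixes $\epsilon$ small (also ensuring $\epsilon<\delta$, which by Lemma \ref{lem: F well-defined} keeps $(V_n,\alpha_n)$ in the domain of $\F$ and makes $\mathscr S(V_n)$ well defined), and then fixes $\epsilon_1$ small depending on $\epsilon$ (also ensuring $|\kappa|<\kappa_0$, $|\mu|<\mu_0$). I expect the main obstacle to be the first bracket: one must carefully account for all four ways $\F$ differs from $\F^0$ --- the factor $S^{-1}$ in the density source, the perturbed principal part $\nabla\cdot((S-1)\nabla\,\cdot\,)$, the rotational forcing $\kappa\nabla\cdot(\omega^2 r\mathbf e_r)$, and the altered Dirichlet data --- and verify that the Schauder constant for $\nabla\cdot(S\nabla\,\cdot\,)$ remains uniform as $S$ ranges over a $C^{\epsilon_1}$-neighborhood of the constant $1$; everything else is a direct application of Lemmas \ref{lem: F well-defined}, \ref{lem: Frechet}, \ref{lem: Lambda invertible} and Theorem \ref{lem: V to S map}.
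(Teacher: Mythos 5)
Your proposal is correct and follows essentially the same route as the paper: both express $(V_{n+1},\alpha_{n+1})-(V^0,\alpha^0)$ as $\Lambda^{-1}$ applied to the remainder $\F(V_n,\alpha_n)-\F^0(V^0,\alpha^0)-D\F^0(V_n-V^0,\alpha_n-\alpha^0)$, split that remainder into an $O(\epsilon_1)$ part coming from $S-1$ and $\kappa$ (controlled by Theorem \ref{lem: V to S map} and Schauder/potential estimates) and a superlinear $O(\epsilon^{1+\beta_1-\beta})$ Taylor part (controlled by Lemma \ref{lem: Frechet}), and then choose $\epsilon$ first and $\epsilon_1$ second. The only cosmetic difference is that the paper assembles everything into a single elliptic problem $\Delta V_n^{\R}=I_1+\dots+I_4$ with boundary data $I_5+I_6$, whereas you estimate the isentropic Taylor remainder directly from the convolution representation of $\F^0$ and treat the parameter perturbation via a separate Dirichlet problem for $\nabla\cdot(S\nabla w)$; both yield the same bounds.
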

\begin{proof}
We first take $\epsilon<\delta$ as given in Lemma \ref{lem: F well-defined}, so that $(V_{n+1},\alpha_{n+1})$ is well-defined. From \eqref{eq: iteration} and $\F^0(V^0,\alpha^0)=(V^0,\alpha^0)$ we get
\begin{align}  
&~(V_{n+1}-V^0,\al_{n+1}-\al^0) \notag\\
=&~ (V_n-V^0,\al_n-\al^0) 
- \Lambda^{-1}\{(V_n-V^0,\al_n-\al^0)    -  \F(V_n,\al_n)+\F^0(V^0,\al^0)\}\notag  \\
= &~(V_n-V^0,\al_n-\al^0)   - \Lambda^{-1}\{\Lambda(V_n-V^0,\al_n-\al^0) - \R_n\}  \notag\\
= &~\Lambda^{-1} \R_n,  \label{eq: iterate remainder}
\end{align}
where the remainder $\R_n:= (V_n^{\R},  \al_n^{\R})$  is defined as 
\begin{align}\label{remainder} 
(V_n^\R,\alpha_n^\R)&= \F(V_n,\al_n)-\F^0(V^0,\al^0) - D\F^0(V_n-V^0,\al_n-\al^0)\notag\\
&=(V^\#_n-V^0,\alpha^\#_n-\alpha^0)- ((V_n-V^0)^\dagger,(\al_n-\al^0)^\dagger).
\end{align}
By the definitions of the respective terms, we have
\begin{align}
\nabla \cdot(\mathscr S(V_n)\nabla V_n^\#)&=-\frac{4\pi g(V_n)}{\mathscr S(V_n)}+\kappa \nabla \cdot(\omega^2 re_r)\quad \text{ on }B_R, \label{eq: Vn sharp}\\
V_n^\#&=\frac{1}{|\cdot|}*\frac{g(V_n)}{\mathscr S(V_n)}+\alpha_n \quad \text{ on }\partial B_R\notag\\
\alpha_n^\# &= \alpha_n+\int_{B_R}\frac{g(V_n)}{\S(V_n)}~dx-M.\notag
\end{align}
\begin{align*}
\nabla \cdot(\nabla V^0)&=-4\pi g(V^0)\quad \text{ on }B_R,\\
V^0&=\frac{1}{|\cdot|}*g(V^0)+\alpha^0 \quad \text{ on }\partial B_R\\
\alpha^0 &= \alpha^0+\int_{B_R}g(V^0)~dx -M.
\end{align*}
\begin{align}
\nabla \cdot(\nabla (V_n-V^0)^\dagger)&=-4\pi g'(V^0)(V_n-V^0)\quad \text{ on }B_R,\label{eq: Vn dagger}\\
(V_n-V^0)^\dagger&=\frac{1}{|\cdot|}*(g'(V^0)(V_n-V^0))+\alpha_n-\alpha^0 \quad \text{ on }\partial B_R\notag\\
(\alpha_n-\alpha^0) ^\dagger&= \alpha_n-\alpha^0+\int_{B_R}g'(V^0)(V_n-V^0)~dx .\notag
\end{align}
Combining the preceding equations, we obtain equations for $(V_n^\R,\alpha_n^\R)$, namely, 
\begin{align}
\Delta V_n^\R=&~\nabla \cdot\left(\{1-\S(V_n)\}\nabla V_n^\#\right)-4\pi g(V_n)\left\{\frac{1}{\S(V_n)}-1\right\}\notag\\
&\quad-4\pi \left\{g(V_n)-g(V^0)-g'(V^0)(V_n-V^0)\right\}+\kappa\nabla \cdot(\omega^2 re_r)\notag\\
:=&~I_1+I_2+I_3+I_4\qquad \text{ in }B_R,\label{eq: Delta VR}
\end{align}
\begin{align}
V_n^\R &= \frac{1}{|\cdot|}*\left(\left\{\frac1{\S(V_n)}-1\right\}g(V_n)\right)+\frac1{|\cdot|}*\left\{g(V_n)-g(V^0)-g'(V^0)(V_n-V^0)\right\}\notag\\
&:= I_5+I_6\qquad \text{ on }\partial B_R,\label{eq: VR BC}
\end{align}
\begin{align}
\alpha_n^\R&= \int_{B_R}\left\{\frac{1}{\S(V_n)}-1\right\}g(V_n)~dx +\int_{B_R}\left\{g(V_n)-g(V^0)-g'(V^0)(V_n-V^0)\right\}~dx\notag\\
&:=I_7+I_8.
\end{align} 
Now let $\|(V_n,\alpha_n)-(V^0,\alpha^0)\|_X<\epsilon$ and $|\kappa|+|\mu|<\epsilon_1$, where $\epsilon$, $\epsilon_1$ are to be determined. We first estimate the $C^{2,\beta}(\overline{B_R})$ norm of $V_n^\R$ by means of Schauder estimates. This amounts to estimating the $C^{0,\beta}(\overline{B_R})$ norm of \eqref{eq: Delta VR} and the $C^{2,\beta}(\pa B_R)$ norm of \eqref{eq: VR BC}. Indeed, we have
\eqn
\|I_1\|_{0,\beta}\le \|V_n^\#\|_{2,\beta}\|\S(V_n)-1\|_{1,\beta}\le C\epsilon_1
\eeqn
by Lemma \ref{lem: F well-defined} and Theorem \ref{lem: V to S map}. Similarly,
\eqn
\|I_2\|_{0,\beta}\le C\|V_n\|_{0,\beta}\|\S(V_n)-1\|_{0,\beta}\le C\epsilon_1.
\eeqn
On the other hand, by Lemma \ref{lem: Frechet}, we have 
\eqn
\|I_3\|_{0,\beta}\le C\|V_n-V^0\|_{0,\beta}^{1+\delta_1}=C\epsilon^{1+\delta_1},
\eeqn
where $\delta_1=\beta_1-\beta$.
It is obvious that $\|I_4\|_{0,\beta}\le C\epsilon_1$. By standard potential estimates, the convolution with $\tfrac1{|x|}$ is bounded from $C^{0,\beta}(\overline{B_{R+1}})$ to $C^{2,\beta}(\overline{B_R})$. 
Because all the functions appearing under the convolution in \eqref{eq: VR BC} are supported in the interior of $B_R$, we only have to bound their $C^{0,\beta}(\overline{B_R})$ norms. It follows as before that $\|I_5\|_{2,\beta}\le C\epsilon_1$, and $\|I_6\|_{2,\beta}\le C\epsilon^{1+\delta_1}$. We now use the Schauder estimates to conclude that 
\eqn
\|V_n^\R\|_{2,\beta}\le C\epsilon_1+C\epsilon^{1+\delta_1}.
\eeqn
We also use the previous estimates directly to estimate $I_7$ and $I_8$, thereby obtaining 
\eqn
|\alpha_n^\R|\le C\epsilon_1+C\epsilon^{1+\delta_1}.
\eeqn
Estimation of \eqref{eq: iterate remainder} yields 
\begin{align*}
\|(V_{n+1},\alpha_{n+1})-(V^0,\alpha^0)\|_X&\le \|\Lambda^{-1}\|_{X\to X}C(\epsilon_1+\epsilon^{1+\delta_1})\\
&\le C(\epsilon_1+\epsilon^{1+\delta_1}).
\end{align*}
We now choose $\epsilon$ so small that $C\epsilon^{1+\delta_1}<\tfrac\epsilon2$, and $\epsilon_1$ so small that $C\epsilon_1<\tfrac\epsilon2$. The proof is complete.
\end{proof}

\subsection{Contraction in the Low Norm}
For notational convenience we denote $\S(V_n)$ by $S_n$. We denote by $Y$ the space $C^{1,\beta}(\overline{B_R})\times\real$. By Lemma \ref{lem: bounded high norm}, $V_n$ and $\alpha_n$ 
are well-defined, and $\|V_n-V^0\|_{2,\beta}<\epsilon$ for all $n$ if $|\kappa|+|\mu|<\epsilon_1$. We want to prove that the $Y$-norm of $(V_n-V_{n-1},\alpha_n-\alpha_{n-1})$ decays. To that end, we first estimate the difference $S_n-S_{n-1}$.

\begin{lemma}\label{lem: diff_S} 
Let $S_n=\mathscr{S}(V_n)$ be given as above. Then
\eqn \label{est_diff_S0}
\|S_n - S_{n-1}\|_0\le C \|\nabla S_{n-1}\|_0 \|\nabla (V_n- V_{n-1})\|_0 
\eeqn
\eqn \label{est_diff_S}
\|S_n - S_{n-1}\|_{C^{0,\beta}_{[1-\beta]}} \le C \|\nabla S_{n-1}\|_\beta \|\nabla (V_n- V_{n-1})\|_{0,\beta} 
\eeqn
where $C^{0,\beta}_{[1-\beta]}$ denotes the weighted H\"{o}lder space defined in \eqref{eq: weighted Holder norm 2}.
\end{lemma}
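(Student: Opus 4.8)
The plan is to exhibit $S_n-S_{n-1}$ as the solution of a transport equation \emph{along the characteristics of $V_n$} with vanishing equatorial data and a source term bilinear in $\nabla S_{n-1}$ and $\nabla(V_n-V_{n-1})$. Both $S_n$ and $S_{n-1}$ solve \eqref{eq: transport}, \eqref{eq: IC}, the former with coefficients built from $\nabla V_n$ and the latter from $\nabla V_{n-1}$, and they share the same source $-\kappa(\omega^2 r)_z$ and the same boundary value $e^{\mu s_0/\gamma}$ on $\{z=0\}$. Let $\phi_n(t,\tau)$ be the characteristic coordinates of $V_n$ furnished by Proposition \ref{lem: char coord}; these are available and obey all the estimates of Section \ref{sec: S est} uniformly in $n$, since $\|V_n-V^0\|_{2,\beta}<\epsilon$ for all $n$ by Lemma \ref{lem: bounded high norm}. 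Differentiating $S_{n-1}\circ\phi_n$ in $t$ along \eqref{eq: ODE} (with $V=V_n$) and using that $S_{n-1}$ solves its own transport equation, one finds, writing $\delta V:=V_n-V_{n-1}$,
\[
\frac{d}{dt}\big[S_{n-1}(\phi_n(t,\tau))\big]=-\kappa(\omega^2 r)_z(\phi_n)+\big[(S_{n-1})_r(\delta V)_z-(S_{n-1})_z(\delta V)_r\big](\phi_n),
\]
whereas $S_n\circ\phi_n$ satisfies the same identity without the last bracket (this is exactly \eqref{eq: S sol}). Since $S_{n-1}(\phi_n(0,\tau))=S_n(\phi_n(0,\tau))=e^{\mu s_0(\tau)/\gamma}$, subtracting and integrating in $t$ gives the representation
\[
(S_n-S_{n-1})(\phi_n(t,\tau))=\int_0^t g(\phi_n(t',\tau))\,dt',\qquad g:=(S_{n-1})_z(\delta V)_r-(S_{n-1})_r(\delta V)_z .
\]
Since $C^{0,\beta}(\overline{B_R})$ is a Banach algebra on this bounded domain, $\|g\|_0\le C\|\nabla S_{n-1}\|_0\|\nabla\delta V\|_0$ and $\|g\|_{0,\beta}\le C\|\nabla S_{n-1}\|_\beta\|\nabla\delta V\|_{0,\beta}$.

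Estimate \eqref{est_diff_S0} is then immediate, because the $t$-range in the characteristic coordinates is bounded (the $t$-range of $O^a$ is, by \eqref{eq: V0 near 0}): $\|S_n-S_{n-1}\|_0\le C\|g\|_0$. This same bound handles the sup part of the $C^{0,\beta}_{[1-\beta]}$ norm in \eqref{est_diff_S}, as the weight $|x|^{1-\beta}$ is bounded on $B_R$, so only the weighted Hölder seminorm needs work. Given $x_1,x_2$ in the first octant with labels $(t_i,\tau_i)=\phi_n^{-1}(x_i)$, I would write
\[
(S_n-S_{n-1})(x_1)-(S_n-S_{n-1})(x_2)=\int_0^{t_1}g(\phi_n(t',\tau_1))\,dt'-\int_0^{t_2}g(\phi_n(t',\tau_2))\,dt'
\]
and split it as the integral over $[\min(t_1,t_2),\max(t_1,t_2)]$, of size at most $\|g\|_0|t_1-t_2|$, plus $\int_0^{\min(t_1,t_2)}[g(\phi_n(t',\tau_1))-g(\phi_n(t',\tau_2))]\,dt'$. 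The latter is bounded by $C[g]_\beta|x_1-x_2|^\beta$ using $|\phi_n(t',\tau_1)-\phi_n(t',\tau_2)|\le C|\tau_1-\tau_2|$ (from $|\phi_\tau|\le C$, \eqref{eq: grad phi 1}), $|\tau_1-\tau_2|\le C|x_1-x_2|$ (from $|\nabla\tau|\le C$, \eqref{eq: grad phi 2}), and boundedness of $\min(t_1,t_2)$. For the former the origin degeneracy enters through $|\nabla t|\le C/|x|$ (again \eqref{eq: grad phi 2}): splitting into the cases $|x_1-x_2|\le\tfrac12\min(|x_1|,|x_2|)$ and $|x_1-x_2|>\tfrac12\min(|x_1|,|x_2|)$ exactly as in the proof of \eqref{eq: grad phi 5}, one gets $|t_1-t_2|\le C|x_1-x_2|/\min(|x_1|,|x_2|)$ in the first case and $|t_1-t_2|\le C$ in the second. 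Multiplying by $\min(|x_1|,|x_2|)$ and dividing by $|x_1-x_2|^\beta$ converts the first contribution into $C\|g\|_0|x_1-x_2|^{1-\beta}\le C\|g\|_0$ and the second into $C[g]_\beta\min(|x_1|,|x_2|)\le C[g]_\beta$; summing, and extending from the octant to all of $B_R$ by the assumed symmetry, yields \eqref{est_diff_S}.

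The step that requires the most care is the derivation of the representation with vanishing data: one must transport $S_{n-1}$ along the characteristics of $V_n$ rather than its own, which is precisely what turns the coefficient mismatch into the bilinear source $g$ and, at the same time, forces $S_n-S_{n-1}$ to vanish on $\{z=0\}$ — the crucial cancellation that lets the transport equation lose a derivative only in $\nabla\delta V$, paid back by estimating in the weaker norm. It is exactly the factor $\nabla t\in C^{0,\beta}_{(1)}$, which blows up like $|x|^{-1}$ at the origin, that pushes the conclusion from an unweighted $C^{0,\beta}$ bound down to the weighted space $C^{0,\beta}_{[1-\beta]}$ stated in the lemma; since this is the same degeneracy already tamed for $\nabla S$ in the proof of Theorem \ref{lem: V to S map}, no machinery beyond Section \ref{sec: S est} is needed, and in the contraction argument to follow the factor $\|\nabla S_{n-1}\|_\beta\le C(|\kappa|+|\mu|)$ supplied by Theorem \ref{lem: V to S map} makes \eqref{est_diff_S} the source of the small contraction constant.
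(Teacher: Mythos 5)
Your proposal is correct and follows essentially the same route as the paper: you derive the identical representation $(S_n-S_{n-1})(\phi_n(t,\tau))=\int_0^t h_n(\phi_n(t',\tau))\,dt'$ with the same bilinear source (the paper gets it by subtracting the two transport equations rather than by transporting $S_{n-1}$ along the characteristics of $V_n$, but the formula is the same), and you then split the Hölder difference into the same two integrals, controlling them with \eqref{eq: grad phi 1} and \eqref{eq: grad phi 2} exactly as the paper does. Your explicit case split on $|x_1-x_2|$ versus $\min(|x_1|,|x_2|)$ for the $|t_1-t_2|$ bound is in fact slightly more careful than the paper's one-line appeal to $|\nabla t|\le C/|x|$.
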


\begin{proof} We note that the difference $S_n-S_{n-1}$ is the solution to the transport equation  
\eqn
(\pa_z V_n )\pa_r(S_n -S_{n-1}) -(\pa_r V_n) \pa_z (S_n-S_{n-1}) = h_n,  
\eeqn
where
\eqn 
h_n = (\pa_zS_{n-1} )\pa_r(V_n -V_{n-1}) - 
 (\pa_rS_{n-1} )\pa_z(V_n -V_{n-1}),
\eeqn
with the zero floor data $(S_n -S_{n-1})(r,0)=0$. Then $S_n-S_{n-1}$ satisfies 
\eqn\label{int_e}
(S_n - S_{n-1})(r,z)=\int_0^t h_n (\phi(t',\tau))~dt'.
\eeqn
where $(r,z)=\phi (t,\tau)$ is the $C^1$ diffeomorphism given by the characteristic coordinates associated with $V_n$, as in Section \ref{sec: char coord}. The $C^0$ estimate \eqref{est_diff_S0} follows directly from \eqref{int_e} since $t$ is bounded.  

To show \eqref{est_diff_S}, it suffices to estimate the weighted H\"{o}lder semi-norm. To this end, let $(r_1,z_1), (r_2,z_2)$ be given. We may assume $0<|(r_1,z_1)|\le |(r_2,z_2)|$ without loss of generality. 
By letting $(t_i,\tau_i)=\phi^{-1}(r_i,z_i)$, $i=1,2$, we see that 
\eqn
\begin{split}
&(S_n - S_{n-1})(r_1,z_1) - (S_n - S_{n-1})(r_2,z_2) \\
 &= \int_{t_2}^{t_1} h_n (\phi(t',\tau_1))~dt' + \int_{0}^{t_2} h_n (\phi(t',\tau_1)) - h_n (\phi(t',\tau_2))~dt'\\
 & := I_1 + I_2
\end{split}
\eeqn
For the first integral $I_1$ we have by \eqref{eq: grad phi 2}
\begin{align}
|I_1| \le |t_1-t_2| \|h_n\|_0 \le C \frac{ | (r_1,z_1)- (r_2,z_2)|}{|(r_1,z_1)|}  \|h_n\|_0
\end{align}
For $I_2$, using the H\"{o}lder regularity of $h_n$ and \eqref{eq: grad phi 1}, \eqref{eq: grad phi 2}, we obtain 
\begin{align}
|I_2|&\le C \|\phi_\tau\|_0 ^\beta |\tau_1-\tau_2|^\beta\|h_n\|_{0,\beta}\notag\\
&\le C\|\nabla \tau\|_0^\beta | (r_1,z_1)- (r_2,z_2)|^\beta \|h_n\|_{0,\beta}\notag\\
&\le C | (r_1,z_1)- (r_2,z_2)|^{\beta} \|h_n\|_{0,\beta}.
\end{align}
Combining the above estimates, we deduce that 
\begin{align}
&~|(r_1,z_1)|\cdot \left|(S_n - S_{n-1})(r_1,z_1)  - (S_n - S_{n-1})(r_2,z_2)\right|\notag\\
\le &~C | (r_1,z_1)- (r_2,z_2)|^\beta  \|h_n\|_{0,\beta}. \label{est_I}
\end{align}
By the definition \eqref{eq: weighted Holder norm 2} of the $C^{0,\beta}_{[1-\beta]}$ norm,  we have proven \eqref{est_diff_S}. 
\end{proof}

We now write the recursive equation for the difference $(V_n-V_{n-1},\alpha_n-\alpha_{n-1})$. From \eqref{eq: iterate remainder}, we have 
\eqn\label{eq: recursion short}
(V_{n+1}-V_n,\al_{n+1}-\al_n) = \Lambda^{-1} (\R_n -\R_{n-1}), 
\eeqn 
where 
\begin{align}
\R_n -\R_{n-1} &= \F(V_n,\al_n)-\F(V_{n-1},\al_{n-1}) - D\F^0 (V_n-V_{n-1},\al_n-\al_{n-1})\notag\\
&=(V_n^\#-V_{n-1}^\#-(V_n-V_{n-1})^\dagger,\alpha_n^\#-\alpha_{n-1}^\#-(\alpha_n-\alpha_{n-1})^\dagger)\\
&:=(\W_n ,\alpha_n^\R-\alpha_{n-1}^\R). \label{def: W, alpha R}
\end{align}
Here we denoted the first component of $\R_n -\R_{n-1} $ by $\W_n$.
We obtain the equation for $\W_n$ from \eqref{eq: Vn sharp}, \eqref{eq: Vn dagger}:
\eqn\label{eq Wn}
\nabla \cdot (S_n \nabla \W_n ) = -\nabla \cdot H_n-  4\pi G_n \quad\text{ on }B_R ,
\eeqn
where 
\begin{align}\label{def H_n}
H_n =  \left\{S_n -1\right\} \nabla (V_n - V_{n-1})^\dagger +  \left[ S_n -S_{n-1}\right] \nabla V^\#_{n-1} 
\end{align}
and 
\eqn\label{def G_n}
G_n = \left\{  \frac{g(V_n)}{S_n} - \frac{g(V_{n-1})}{S_{n-1}}- g'(V^0) (V_n-V_{n-1}) \right\},
\eeqn
and on the surface $\pa B_R$ we have 
\begin{align}
\label{BC_Wn}\W_n = \frac1{|\cdot|} * G_n .
\end{align}
Similarly, we have 
\eqn   
\begin{split}\label{def: alpha n - alpha n -1}
\al_n^\R - \alpha_{n-1}^\R= \int_{B_R}  G_n  ~dx.
\end{split}
\eeqn  


\begin{lemma}\label{lem: contraction}
There exist $\ep>0, \ep_1>0$ such that if 
$\sup_{n\ge1} \|(V_n,\al_n) - (V^0,\al^0)\|_X < \ep$ for  $|\ka| + |\mu| < \ep_1$, then 
\eqn
\|(V_{n+1}- V_n,\alpha_{n+1}-\alpha_n)\|_{Y}\le \frac12  \|(V_n - V_{n-1},\alpha_n-\alpha_{n-1} )\|_Y 
\eeqn 
for all $n$.  
\end{lemma}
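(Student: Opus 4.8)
\emph{Proof plan.} Since $\Lambda^{-1}$ is bounded on $Y$ with operator norm at most $1$ (Lemma \ref{lem: Lambda invertible}), the recursion \eqref{eq: recursion short}--\eqref{def: W, alpha R} reduces the claim to proving
\[\|\W_n\|_{1,\beta}+|\al_n^\R-\al_{n-1}^\R|\le C(\ep_1+\ep^{\beta_1})\,\|(V_n-V_{n-1},\al_n-\al_{n-1})\|_Y,\]
with $\beta_1=\min(q-1,1)$, and then choosing $\ep,\ep_1$ so small that $C(\ep_1+\ep^{\beta_1})\le\tfrac12$. The plan for bounding $\W_n$ is to treat \eqref{eq Wn} as a perturbation of the Poisson equation, $\Delta\W_n=\nb\cdot\big((1-S_n)\nb\W_n\big)-\nb\cdot H_n-4\pi G_n$ on $B_R$, with Dirichlet data \eqref{BC_Wn}. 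The crucial observation is that, since only a $C^{1,\beta}$ (not a $C^{2,\beta}$) bound on $\W_n$ is needed, the divergence-form elliptic estimate requires only the $C^{0,\beta}$ norms of the flux terms together with the $L^\infty$ norm of the zeroth-order term $G_n$; in particular $\|(1-S_n)\nb\W_n\|_{0,\beta}\le C\|S_n-1\|_{0,\beta}\|\W_n\|_{1,\beta}\le C\ep_1\|\W_n\|_{1,\beta}$ by Theorem \ref{lem: V to S map}(ii), and this may be absorbed on the left for $\ep_1$ small, leaving $\|\W_n\|_{1,\beta}\le C\big(\|H_n\|_{0,\beta}+\|G_n\|_0+\|\tfrac1{|\cdot|}*G_n\|_{C^{1,\beta}(\pa B_R)}\big)$.

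The main work, and the step I expect to be the principal obstacle, is estimating $H_n=\{S_n-1\}\nb(V_n-V_{n-1})^\dagger+[S_n-S_{n-1}]\nb V_{n-1}^\#$ in $C^{0,\beta}$. The first summand is harmless: $\|S_n-1\|_{1,\beta}\le C\ep_1$ by Theorem \ref{lem: V to S map}(ii), while the linear elliptic system \eqref{eq: dagger 1}--\eqref{eq: dagger 2} — whose boundary data $\tfrac1{|\cdot|}*(g'(V^0)(V_n-V_{n-1}))+\al_n-\al_{n-1}$ is smooth on $\pa B_R$ because $g'(V^0)$ is supported in $B_{R^0}$ — gives $\|(V_n-V_{n-1})^\dagger\|_{2,\beta}\le C\|(V_n-V_{n-1},\al_n-\al_{n-1})\|_Y$, so this summand is $\le C\ep_1\|(V_n-V_{n-1},\al_n-\al_{n-1})\|_Y$. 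The second summand is delicate because, owing to the loss of regularity in the transport equation, Lemma \ref{lem: diff_S} only supplies $S_n-S_{n-1}$ in the \emph{degenerate} weighted space $C^{0,\beta}_{[1-\beta]}$, with
\[\|S_n-S_{n-1}\|_{C^{0,\beta}_{[1-\beta]}}\le C\|\nb S_{n-1}\|_{0,\beta}\|\nb(V_n-V_{n-1})\|_{0,\beta}\le C\ep_1\|(V_n-V_{n-1},\al_n-\al_{n-1})\|_Y,\]
using $\|\nb S_{n-1}\|_{0,\beta}\le\|S_{n-1}-1\|_{1,\beta}\le C\ep_1$; this bound is strictly weaker than a $C^{0,\beta}$ bound near the origin. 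The remedy I would use is that $\nb V_{n-1}^\#$ vanishes at the origin: since $V_{n-1}^\#\in C^{2,\beta}(\overline{B_R})$ is axisymmetric and even in $z$ with norm bounded uniformly in $n$ (Lemma \ref{lem: F well-defined}, Lemma \ref{lem: bounded high norm}), one has $|\nb V_{n-1}^\#(x)|\le C|x|$, that is, $\nb V_{n-1}^\#\in C^{0,\beta}_{(-1)}$ with uniformly bounded norm. Passing between the $[\,\cdot\,]$ and $(\,\cdot\,)$ weighted norms by Lemma \ref{lem: equiv Holder} and invoking the weighted product estimate Lemma \ref{lem: w Holder prod}, one obtains $[S_n-S_{n-1}]\nb V_{n-1}^\#\in C^{0,\beta}_{(-\beta)}\subset C^{0,\beta}$ with norm $\le C\ep_1\|(V_n-V_{n-1},\al_n-\al_{n-1})\|_Y$. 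Altogether $\|H_n\|_{0,\beta}\le C\ep_1\|(V_n-V_{n-1},\al_n-\al_{n-1})\|_Y$.

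It remains to bound $G_n=\tfrac{g(V_n)}{S_n}-\tfrac{g(V_{n-1})}{S_{n-1}}-g'(V^0)(V_n-V_{n-1})$ in $L^\infty$; this simultaneously controls $\al_n^\R-\al_{n-1}^\R=\int_{B_R}G_n\,dx$ and, by Remark \ref{lem: support}, the Dirichlet data $\|\tfrac1{|\cdot|}*G_n\|_{C^{1,\beta}(\pa B_R)}\le C\|G_n\|_0$, since $G_n$ is supported in $B_{\frac{R^0+R}{2}}$, away from $\pa B_R$ where the kernel is smooth. Splitting
\[G_n=g(V_n)\frac{S_{n-1}-S_n}{S_nS_{n-1}}+\frac{g(V_n)-g(V_{n-1})-g'(V^0)(V_n-V_{n-1})}{S_{n-1}}+g'(V^0)(V_n-V_{n-1})\Big(\frac1{S_{n-1}}-1\Big),\]
the first and third pieces are $\le C\ep_1\|(V_n-V_{n-1},\al_n-\al_{n-1})\|_Y$ by \eqref{est_diff_S0} and $\|S_{n-1}-1\|_0\le C\ep_1$, while the middle piece, written as $\big(\int_0^1[g'(V_{n-1}+t(V_n-V_{n-1}))-g'(V^0)]\,dt\big)(V_n-V_{n-1})S_{n-1}^{-1}$ and controlled by the $\beta_1$-H\"older continuity of $g'$ together with $\|V_n-V^0\|_0,\|V_{n-1}-V^0\|_0\le\ep$ (exactly as in Lemma \ref{lem: Frechet}), is $\le C\ep^{\beta_1}\|(V_n-V_{n-1},\al_n-\al_{n-1})\|_Y$. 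Inserting all of these bounds into \eqref{eq: recursion short} gives $\|(V_{n+1}-V_n,\al_{n+1}-\al_n)\|_Y\le C(\ep_1+\ep^{\beta_1})\|(V_n-V_{n-1},\al_n-\al_{n-1})\|_Y$, and the contraction follows once $\ep$ and $\ep_1$ are taken small enough.
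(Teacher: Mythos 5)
Your proposal is correct and follows essentially the same route as the paper's proof: the same reduction via $\Lambda^{-1}$ on $Y$, the same $C^{1,\beta}$ Schauder estimate requiring only $\|H_n\|_{0,\beta}$ and $\|G_n\|_0$, and—crucially—the same resolution of the delicate term $[S_n-S_{n-1}]\nabla V^\#_{n-1}$ by combining the weighted bound of Lemma \ref{lem: diff_S} with the fact that $\nabla V^\#_{n-1}$ vanishes at the origin, via Lemmas \ref{lem: equiv Holder} and \ref{lem: w Holder prod}. The only differences (absorbing $(1-S_n)\nabla\W_n$ into the Laplacian instead of invoking the divergence-form estimate directly, and a three-term rather than four-term splitting of $G_n$) are cosmetic.
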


\begin{proof}
We first claim the estimates 
\eqn
\|H_n \|_{0,\beta} \le C \ep_1   \| (V_n - V_{n-1},\alpha_n-\alpha_{n-1}) \|_Y,
\eeqn
and
\eqn\label{eq: G_n est}
\| G_n \|_0 \le C( \ep_1 +\ep^{\beta_1})  \|  V_n - V_{n-1}\|_{1}  + C\| V_n - V_{n-1} \|_{0}^{1+\beta_1}.
\eeqn
Indeed, by \eqref{eq: dagger 1}, \eqref{eq: dagger 2} with $\delta V=V_n-V_{n-1}$, $\delta\alpha=\alpha_n-\alpha_{n-1}$ and standard potential theoretic estimates, we have
\begin{align}
\|(V_n-V_{n-1})^\dagger\|_{1,\beta} &\le C(\|V_n - V_{n-1}\|_0+|\alpha_n-\alpha_{n-1}|)\notag\\
&=C\|(V_n-V_{n-1},\alpha_n-\alpha_{n-1})\|_Y. 
\end{align}
We denote the two terms in \eqref{def H_n} by $H_{n1}$ and $H_{n2}$ respectively. We estimate
\begin{align}
\|H_{n1}\|_{0,\beta} &\le \|\left\{S_n -1\right\} \nabla (V_n - V_{n-1})^\dagger\|_{0,\beta}  \notag\\
&\le C\|S_n-1\|_{0,\beta}\|(V_n-V_{n-1})^\dagger\|_{1,\beta}\notag\\
&\le C\ep_1\|(V_n-V_{n-1},\alpha_n-\alpha_{n-1})\|_Y,
\end{align}
where we have used \eqref{S near 1}. 
On the other hand, 
\begin{align}
\|H_{n2}\|_{0,\beta} &\le \| \left[ S_n -S_{n-1}\right] \nabla V^\#_{n-1} \|_{0,\beta} \notag\\  
&\le \| \left[ S_n -S_{n-1}\right] \nabla V^\#_{n-1} \|_{C^{0,\beta}_{[-\beta]}} \label{est: Hn2 1}\\
 &\le C \|S_{n}-S_{n-1}\|_{C^{0,\beta}_{[1-\beta]}}  \|\nabla V^\#_{n-1} \|_{C^{0,\beta}_{[-1]}} \label{est: Hn2 2}\\
&\le C\|\nabla (S_{n-1}-1)\|_{0,\beta}  \|V_n-V_{n-1}\|_{1,\beta}\label{est: Hn2 3}\\
&\le C\ep_1\|V_n-V_{n-1}\|_{1,\beta}.
\end{align}
Here we used the definition of the weighted H\"older spaces and Lemmas \ref{lem: equiv Holder} and \ref{lem: w Holder prod} regarding their properties to get \eqref{est: Hn2 1} and \eqref{est: Hn2 2}. We used Lemma \ref{lem: diff_S} and the estimate $ \|\nabla V^\#_{n-1} \|_{C^{0,\beta}_{[-1]}} \le\|\nabla V^\#_{n-1} \|_{1} \le C$ to get \eqref{est: Hn2 3}. That $V_{n-1}^\#$ is bounded in $C^2$ follows from the last assertion of Lemma \ref{lem: F well-defined}. Combining the above estimates, we infer that
\eqn
\|H_n\|_{0,\beta}\le C\epsilon_1\|(V_n-V_{n-1},\alpha_n-\alpha_{n-1})\|_Y.
\eeqn
We may rewrite $G_n$ as 
\begin{align*}
G_n=&~\left[g(V_n) - g(V_{n-1}) - g'(V_{n-1}) (V_n-V_{n-1})\right]   + \left\{ g'(V_{n-1}) - g'(V^0)\right\} (V_n-V_{n-1})  \\
&~+ \left\{ \frac{1}{S_n} -1\right\} [g(V_n) - g(V_{n-1})] + \left[ \frac{1}{S_n} - \frac{1}{S_{n-1}} \right] g(V_{n-1})\\
:= &~G_{n1}+G_{n2}+G_{n3}+G_{n4}.
\end{align*}
By \eqref{eq: Frechet C0},
\eqn
\|G_{n1}\|_0\le C\|V_{n}-V_{n-1}\|_0^{1+\beta_1}.
\eeqn
We also have
\eqn
\|G_{n2}\|_0\le C\|V_{n-1}-V^0\|_0^{\beta_1}\|V_n-V_{n-1}\|_0\le C\epsilon^{\beta_1}\|V_n-V_{n-1}\|_0.
\eeqn
From \eqref{S near 1}, 
\eqn
\|G_{n3}\|_0\le C\|S_n-1\|_0\|V_n-V_{n-1}\|_0\le C\epsilon_1\|V_n-V_{n-1}\|_0.
\eeqn
From Lemma \ref{lem: diff_S},
\begin{align}
\|G_{n4}\|_0\le C\|S_n-S_{n-1}\|_0&\le C\|S_{n-1}-1\|_1\|V_n-V_{n-1}\|_1\notag\\
&\le C\epsilon_1\|V_n-V_{n-1}\|_1.
\end{align}
\eqref{eq: G_n est} follows from combining the above estimates. 

By standard potential estimates, we have
\eqn
\left\|\frac1{|\cdot|}*G_n\right\|_{1,\beta}\le C\|G_n\|_0.
\eeqn
We now use the $C^{1,\beta}$ Schauder estimates given by Theorem 8.33 of \cite{GilbargTru} on the equations   \eqref{eq Wn}, \eqref{BC_Wn} to get
\eqn
\begin{split}
&~\|\W_n\|_{1,\beta} \\
\le&~ C (\|H_n \|_{0,\beta} + \| G_n \|_0 ) \\
\le &~C(\ep_1 +\ep^{\beta_1})  \|(V_n - V_{n-1},\alpha_n-\alpha_{n-1}) \|_Y+C\|V_n - V_{n-1} \|_{0}^{1+\beta_1}\\
\le &~C(\ep_1 +\ep^{\beta_1})  \|(V_n - V_{n-1},\alpha_n-\alpha_{n-1}) \|_Y
\end{split}
\eeqn
where  $\|V_n - V_{n-1}\|_0\le 2 \ep$, which follows from the assumption that $\|(V_n,\al_n) - (V^0,\al^0)\|_X < \ep$ for all $n\ge 1$. We also have from \eqref{def: alpha n - alpha n -1} that
\eqn
|\al_n^\R - \alpha_{n-1}^\R | \le C(\ep_1 +\ep^{\beta_1})  \|V_n - V_{n-1} \|_{1}
\eeqn
From \eqref{eq: recursion short}, \eqref {def: W, alpha R} and by Lemma \ref{lem: Lambda invertible} we get
\begin{align}
&~\|(V_{n+1}-V_n,\alpha_{n+1}-\alpha_n)\|_Y\notag\\
\le&~ C\|\Lambda^{-1}\|_{Y\to Y}(\ep_1 +\ep^{\beta_1})  \|(V_n - V_{n-1},\alpha_n-\alpha_{n-1}) \|_Y\notag\\
\le &~ C(\ep_1 +\ep^{\beta_1})  \|(V_n - V_{n-1},\alpha_n-\alpha_{n-1}) \|_Y.
\end{align}
We now choose $\epsilon_1$ so small that $C\epsilon_1<\tfrac14$, and $\epsilon$ so small that $C\ep^{\beta_1}<\tfrac14$, which completes the proof.
\end{proof}

\subsection{Existence and Uniqueness}

We need the following interpolation lemma on H\"older spaces.
\begin{lemma}\label{lem: interpolation}
Let $0<\beta'<\beta$. There exists a $C>0$ such that
\eqn
\|u\|_{2,\beta'}\le C\|u\|_{1,\beta}^{\lambda_1}\|u\|_{2,\beta}^{1-\lambda_1},
\eeqn
\eqn
\|v\|_{1,\beta'}\le C\|v\|_{0}^{\lambda_2}\|v\|_{1,\beta}^{1-\lambda_2},
\eeqn
for all
$u\in C^{2,\beta}(\overline{B_R})$ and $v\in C^{1,\beta}(\overline{B_R})$. Here 
\begin{align*}
2+\beta' &= \lambda_1(1+\beta)+(1-\lambda_1)(2+\beta),\\
1+\beta' &= \qquad \qquad \quad ~(1-\lambda_2)(1+\beta).
\end{align*}
\end{lemma}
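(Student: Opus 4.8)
The plan is to recognize both inequalities as special cases of the classical multiplicative interpolation inequality for H\"older spaces. Writing the \emph{order} of $C^{k,\delta}(\overline{B_R})$ as $k+\delta$, the statement to be proved is
$$
\|u\|_{C^{c}(\overline{B_R})}\le C\,\|u\|_{C^{a}(\overline{B_R})}^{\lambda}\,\|u\|_{C^{b}(\overline{B_R})}^{1-\lambda},
\qquad a\le c\le b,\quad c=\lambda a+(1-\lambda)b,\ \ \lambda\in[0,1].
$$
The first asserted estimate is the case $a=1+\beta$, $b=2+\beta$, $c=2+\beta'$: since $0<\beta'<\beta<1$ we have $a\le c\le b$, and the hypothesis $2+\beta'=\lambda_1(1+\beta)+(1-\lambda_1)(2+\beta)$ is exactly the relation $c=\lambda_1 a+(1-\lambda_1)b$, which forces $\lambda_1=\beta-\beta'\in(0,1)$. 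The second estimate is the case $a=0$, $b=1+\beta$, $c=1+\beta'$, with the stated relation $1+\beta'=(1-\lambda_2)(1+\beta)$ identifying $\lambda_2=(\beta-\beta')/(1+\beta)\in(0,1)$ as the interpolation parameter. So it suffices to prove the displayed inequality for these exponents.

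First I would reduce to the whole space. Fix a bounded linear extension operator $\mathcal E$ that is bounded from $C^{m,\delta}(\overline{B_R})$ to $C^{m,\delta}(\real^3)$ for the finitely many pairs $(m,\delta)$ (including $\delta=0$) occurring above; for a ball such an operator can be constructed by an elementary reflection across $\partial B_R$. Since restriction to $\overline{B_R}$ does not increase any of these norms, it is enough to prove $\|w\|_{C^{c}(\real^3)}\le C\|w\|_{C^{a}(\real^3)}^{\lambda}\|w\|_{C^{b}(\real^3)}^{1-\lambda}$ and then apply it to $w=\mathcal E u$.

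On $\real^3$ this follows by mollification. Let $\eta_\rho$ be a standard mollifier at scale $\rho>0$ and set $w_\rho=w*\eta_\rho$, so that $\|w\|_{C^{c}}\le\|w_\rho\|_{C^{c}}+\|w-w_\rho\|_{C^{c}}$. The two standard scaling estimates
$$
\|w-w_\rho\|_{C^{c}(\real^3)}\le C\,\rho^{\,b-c}\,\|w\|_{C^{b}(\real^3)},
\qquad
\|w_\rho\|_{C^{c}(\real^3)}\le C\,\rho^{\,a-c}\,\|w\|_{C^{a}(\real^3)}
$$
(valid because $a\le c\le b$) give $\|w\|_{C^{c}}\le C\rho^{b-c}\|w\|_{C^{b}}+C\rho^{a-c}\|w\|_{C^{a}}$ for every $\rho\in(0,1]$. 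Choosing $\rho=\min\big(1,(\|w\|_{C^{a}}/\|w\|_{C^{b}})^{1/(b-a)}\big)$ balances the two terms, and since $b-c=\lambda(b-a)$ and $c-a=(1-\lambda)(b-a)$, each term is bounded by $C\|w\|_{C^{a}}^{\lambda}\|w\|_{C^{b}}^{1-\lambda}$, which is the claim (the case $w\equiv 0$ being trivial). Alternatively one may invoke the interpolation inequalities of Section 6.8 of \cite{GilbargTru}, since for a ball the seminorms used there are comparable to the ordinary H\"older norms.

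The only genuinely technical ingredient is the pair of mollification estimates above; within them, the seminorm parts, controlling $[D^{k}(w-w_\rho)]_{\delta}$ and $[D^{k}w_\rho]_{\delta}$, are obtained by interpolating between the two crude bounds $|f(x)-f(y)|\le 2\|f\|_0$ and $|f(x)-f(y)|\le[f]_\delta|x-y|^\delta$ at the natural scale $|x-y|\sim\rho$. This is routine once one works on all of $\real^3$, which is precisely why the extension step must come first; matching the interpolation exponents to the prescribed $\lambda_1,\lambda_2$ is then the bookkeeping already carried out in the first paragraph.
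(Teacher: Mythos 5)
Your argument is correct. Note first that the paper does not actually prove this lemma; it simply cites \cite{Alinhac} and Ex.\ 3.2.6 of \cite{Krylov}, so you have supplied a genuine proof where the authors give only a reference. Your route --- extend to $\real^3$, mollify at scale $\rho$, use the two scaling estimates $\|w-w_\rho\|_{C^c}\le C\rho^{b-c}\|w\|_{C^b}$ and $\|w_\rho\|_{C^c}\le C\rho^{a-c}\|w\|_{C^a}$, then optimize in $\rho$ --- is the standard proof of the convexity inequality for H\"older norms, and your identification of the exponents ($\lambda_1=\beta-\beta'$, $\lambda_2=(\beta-\beta')/(1+\beta)$) matches the lemma. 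Two small points of care, neither of which is a gap: (i) a plain even reflection across $\partial B_R$ only preserves Lipschitz regularity, so for $C^{2,\beta}$ you need a higher-order (Hestenes/Seeley-type) reflection or a Whitney extension; the paper itself already assumes such a bounded extension operator $E:C^{2,\beta}(\overline{B_R})\to C^{2,\beta}(\overline{B_{R+1}})$ in Section 3, so this is consistent with its framework. (ii) In the second scaling estimate the crude bound $\|D^2w_\rho\|_0\le C\rho^{-1}\|w\|_1$ has the wrong power; one must exploit $\int D\eta_\rho=0$ to write $D^2w_\rho(x)=\int\bigl(Dw(x-y)-Dw(x)\bigr)D\eta_\rho(y)\,dy$ and gain the factor $\rho^{\beta}$ from $[Dw]_\beta$, and similarly for the $\beta'$-seminorm via $[f]_{\beta'}\le\|Df\|_0^{\beta'}(2\|f\|_0)^{1-\beta'}$. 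You allude to exactly this mechanism in your last paragraph, so the proof is complete in substance. Your remark that one could instead invoke the interpolation inequalities of Section 6.8 of \cite{GilbargTru} directly on the ball (which is convex and satisfies a uniform cone condition) is also a valid alternative that avoids the extension step entirely.
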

\begin{proof}
 See \cite{Alinhac} or  Ex. 3.2.6 in \cite{Krylov}.     \end{proof}


We are now ready to prove the main existence theorem, which we repeat for the reader's convenience.  

\begin{theorem*}
Let $s_0\in C^{1,\beta}(\overline{B_R}\cap\{z=0\})$ and $\omega^2\in C^{2,\beta}(\overline{B_R})$ be given. There exist $\epsilon>0$, $\epsilon_1>0$ such that if $|\kappa|+|\mu|<\epsilon_1$, then there exists a unique solution $(V^*,\alpha^*)\in X$, and $S^*\in C^{1,\beta}(\overline{B_R})$ to \eqref{eq: curl}, \eqref{eq: div}, \eqref{eq: floor s}, \eqref{eq: BC}, \eqref{eq: mass}, with $\|(V^*,\alpha^*)-(V^0,\alpha^0)\|_X\le\epsilon$. Furthermore, the solution satisfies that $V^*_+$ is supported on $B_{\frac{R^0+R}{2}}$, and $S^*=1$ outside $B_{R_1}$ for some fixed $R_1\in (R^0,R)$.
\end{theorem*}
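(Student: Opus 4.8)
The plan is to obtain the solution as the limit of the Newton iteration \eqref{eq: initialization}--\eqref{eq: iteration}, gluing together the high-norm bound of Lemma~\ref{lem: bounded high norm} and the low-norm contraction of Lemma~\ref{lem: contraction} by means of the interpolation Lemma~\ref{lem: interpolation}. Fix $\epsilon,\epsilon_1>0$ small enough that Lemmas~\ref{lem: bounded high norm} and~\ref{lem: contraction} both apply and that $\epsilon<\delta$, the radius in Lemma~\ref{lem: F well-defined}. Starting from $(V_0,\alpha_0)=(V^0,\alpha^0)$, an induction on $n$ using Lemma~\ref{lem: bounded high norm} shows that every $(V_n,\alpha_n)$ is well defined (each step being legitimate by Lemma~\ref{lem: F well-defined}) and that $\|(V_n,\alpha_n)-(V^0,\alpha^0)\|_X<\epsilon$. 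Hence the hypotheses of Lemma~\ref{lem: contraction} hold at every step, so $\|(V_{n+1}-V_n,\alpha_{n+1}-\alpha_n)\|_Y\le\tfrac12\|(V_n-V_{n-1},\alpha_n-\alpha_{n-1})\|_Y$; the series $\sum_n\|(V_{n+1}-V_n,\alpha_{n+1}-\alpha_n)\|_Y$ converges, and $(V_n,\alpha_n)$ is Cauchy, hence convergent, in $Y=C^{1,\beta}(\overline{B_R})\times\real$.

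Next I would upgrade the convergence. Fix any $\beta'\in(0,\beta)$. Since $\|(V_{n+m}-V_n,\alpha_{n+m}-\alpha_n)\|_X\le 2\epsilon$ for all $n,m$ while the sequence is Cauchy in $Y$, Lemma~\ref{lem: interpolation} gives $\|(V_{n+m}-V_n,\alpha_{n+m}-\alpha_n)\|_{C^{2,\beta'}(\overline{B_R})\times\real}\le C(2\epsilon)^{1-\lambda_1}\|(V_{n+m}-V_n,\alpha_{n+m}-\alpha_n)\|_Y^{\lambda_1}\to 0$, so $(V_n,\alpha_n)$ converges, say to $(V^*,\alpha^*)$, in $C^{2,\beta'}(\overline{B_R})\times\real$ for every $\beta'<\beta$. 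Because the $C^{2,\beta}$ Hölder seminorm is lower semicontinuous under this convergence, $(V^*,\alpha^*)\in X$ with $\|(V^*,\alpha^*)-(V^0,\alpha^0)\|_X\le\epsilon$; in particular $V^*$ lies in the $\delta$-ball of Lemma~\ref{lem: F well-defined} and of Theorem~\ref{lem: V to S map}. Moreover, rewriting \eqref{eq: iteration} as $(V_n,\alpha_n)-\F(V_n,\alpha_n,\kappa,\mu)=-\Lambda(V_{n+1}-V_n,\alpha_{n+1}-\alpha_n)$ and using that $\Lambda=I-D\F^0$ is bounded on $C^{2,\beta'}(\overline{B_R})\times\real$ by the formulas \eqref{eq: F deriv 1}--\eqref{eq: F deriv 2}, we get $(V_n,\alpha_n)-\F(V_n,\alpha_n,\kappa,\mu)\to0$, so the iterate $(V_n^\#,\alpha_n^\#)=\F(V_n,\alpha_n,\kappa,\mu)$ also converges to $(V^*,\alpha^*)$ in $C^{2,\beta'}(\overline{B_R})\times\real$.

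Then I would verify that $(V^*,\alpha^*)$ together with $S^*:=\mathscr S(V^*)$ solves the system. Since $V^*$ is in the $\delta$-ball, Theorem~\ref{lem: V to S map} gives $S^*\in C^{1,\beta}(\overline{B_R})$ solving the transport problem \eqref{eq: curl}, \eqref{eq: floor s}, with $S^*\equiv1$ outside some $B_{R_1}$, $R_1\in(R^0,R)$, and $\|S^*-1\|_{1,\beta}\le C(|\kappa|+|\mu|)$. It remains to pass to the limit in the relations \eqref{eq: Vn sharp} defining $V_n^\#$. The only delicate point is the continuity of $V\mapsto\mathscr S(V)$: although $\mathscr S$ is \emph{not} continuous in the $C^{2,\beta}$ topology — precisely the reason for running the iteration in two norms — the proof of \eqref{est_diff_S0}, applied to $V_n$ and $V^*$ in place of consecutive iterates, yields $\|\mathscr S(V_n)-\mathscr S(V^*)\|_0\le C\|\nabla \mathscr S(V^*)\|_0\|\nabla(V_n-V^*)\|_0\to0$; since $\mathscr S(V_n)$ stays bounded in $C^{1,\beta}$ by Theorem~\ref{lem: V to S map}, interpolation upgrades this to $\mathscr S(V_n)\to S^*$ in $C^{1,\beta'}(\overline{B_R})$. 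Combining this with $(V_n)_+^q\to(V^*)_+^q$ in $C^{0,\beta'}(\overline{B_R})$ (Lemma~\ref{lem: Frechet}, or directly since $q>1$), with the boundedness below of $S_n$, and with the continuity of the Newtonian potential from $C^{0,\beta'}(\overline{B_R})$ to $C^{2,\beta'}(\overline{B_R})$, one lets $n\to\infty$ in $\nabla\cdot(S_n\nabla V_n^\#)=-4\pi(V_n)_+^qS_n^{-1}+\kappa\nabla\cdot(\omega^2 r\mathbf e_r)$, in the boundary relation $V_n^\#=\tfrac1{|\cdot|}*((V_n)_+^qS_n^{-1})+\alpha_n$ on $\partial B_R$, and in $\alpha_n^\#=\alpha_n+\int(V_n)_+^qS_n^{-1}-M$, arriving at exactly \eqref{eq: div}, \eqref{eq: BC} and \eqref{eq: mass} for $(V^*,\alpha^*,S^*)$. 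Finally Remark~\ref{lem: support} gives the support statement for $V^*_+$, since $\|V^*-V^0\|_{C^0(\overline{B_R})}\le\epsilon<\delta$.

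For uniqueness, suppose $(\hat V,\hat\alpha)\in X$ with $\|(\hat V,\hat\alpha)-(V^0,\alpha^0)\|_X\le\epsilon$, together with some $\hat S\in C^{1,\beta}(\overline{B_R})$, also solves \eqref{eq: curl}, \eqref{eq: div}, \eqref{eq: floor s}, \eqref{eq: BC}, \eqref{eq: mass}. By uniqueness in Theorem~\ref{lem: V to S map}, $\hat S=\mathscr S(\hat V)$; then uniqueness for the variable-coefficient Dirichlet problem together with the mass constraint shows $\F(\hat V,\hat\alpha,\kappa,\mu)=(\hat V,\hat\alpha)$, and the same holds for $(V^*,\alpha^*)$. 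Setting $\mathcal R(V,\alpha):=\F(V,\alpha,\kappa,\mu)-\F^0(V^0,\alpha^0)-D\F^0(V-V^0,\alpha-\alpha^0)$, both being fixed points gives, as in \eqref{eq: recursion short}, $(\hat V,\hat\alpha)-(V^*,\alpha^*)=\Lambda^{-1}\big(\mathcal R(\hat V,\hat\alpha)-\mathcal R(V^*,\alpha^*)\big)$, and the estimates in the proof of Lemma~\ref{lem: contraction} — which bound this difference in the relevant Schauder and potential norms by $C(\epsilon_1+\epsilon^{\beta_1})\|(\hat V,\hat\alpha)-(V^*,\alpha^*)\|_Y$ for any two points of the $\epsilon$-ball in $X$ — together with the $Y$-boundedness of $\Lambda^{-1}$ (Lemma~\ref{lem: Lambda invertible}) force $\|(\hat V,\hat\alpha)-(V^*,\alpha^*)\|_Y\le\tfrac12\|(\hat V,\hat\alpha)-(V^*,\alpha^*)\|_Y$, hence $(\hat V,\hat\alpha)=(V^*,\alpha^*)$ and so $\hat S=S^*$. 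The main obstacle in this program is the continuity of $\mathscr S$ discussed above: one must check, at the cost of a small loss of Hölder exponent, that the characteristic-coordinate construction behind Theorem~\ref{lem: V to S map} depends continuously on $V$; the remainder of the argument is bookkeeping with estimates already established.
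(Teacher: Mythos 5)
Your proposal is correct and follows essentially the same route as the paper: Newton iteration, high-norm boundedness plus low-norm contraction, Hölder interpolation to pass to a $C^{2,\beta'}$ limit, lower semicontinuity of the $C^{2,\beta}$ seminorm, limit passage in the defining equations, and uniqueness by rerunning the contraction estimate on two fixed points. The only (harmless) variations are that you identify $S^*$ as $\mathscr S(V^*)$ and prove convergence $\mathscr S(V_n)\to\mathscr S(V^*)$ via the $C^0$ estimate of Lemma \ref{lem: diff_S} plus interpolation, where the paper instead shows $\{S_n\}$ is Cauchy in $C^{1,\beta'}$ and passes to the limit in the transport equation, and that you make explicit the (true but glossed-over) step that any solution of the system in the $\epsilon$-ball is a fixed point of $\F$.
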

\begin{proof}
We take $\epsilon$ and $\epsilon_1$ small enough  that Lemmas \ref{lem: bounded high norm} and \ref{lem: contraction} hold. It follows by iteration that $$\|(V_n-V_{n-1},\alpha_n-\alpha_{n-1})\|_Y\le \frac C{2^n}.$$ Lemma \ref{lem: diff_S} now gives 
$$
\|S_{n}-S_{n-1}\|_0\le \frac{C}{2^n}.
$$
Pick a $\beta'\in(0,\beta)$. Lemmas \ref{lem: bounded high norm}, \ref{lem: interpolation} together with the preceding estimates imply that 
\eqn
\|V_{n}-V_{n-1}\|_{2,\beta'}\le \frac{C}{2^{\lambda_1 n}},
\eeqn
\eqn
\|S_n-S_{n-1}\|_{1,\beta'}\le \frac{C}{2^{\lambda_2 n}}.
\eeqn
It follows that $\{V_{n}\}$ is a Cauchy sequence in $C^{2,\beta'}(\overline{B_R})$, $\{S_n\}$ is a Cauchy sequence in $C^{1,\beta'}(\overline{B_R})$, and $\{\alpha_n\}$ is a Cauchy sequence in $\real$. Denote their limits by $V^*$, $S^*$ and $\alpha^*$. By \eqref{eq: iteration}, we have
\eqn
(V_n,\alpha_n)-\F(V_n,\alpha_n,\kappa,\mu)=\Lambda (V_{n+1}-V_n,\alpha_{n+1}-\alpha_n).
\eeqn
Since $\Lambda$ is bounded on $C^{2,\beta'}(\overline{B_R})\times \real$ by Lemma \ref{lem: Lambda invertible}, $(V_n,\alpha_n)-\F(V_n,\alpha_n,\kappa,\mu)$ converges to zero, and $(V_n^\#,\alpha_n^\#)=\F(V_n,\alpha_n,\kappa,\mu)$ converges to $(V^*,\alpha^*)$ in $C^{2,\beta'}(\overline{B_R})\times \real$. We know that $V_n^\#,\alpha_n^\#$ and $S_n$ satisfy the equations 
\begin{align*}
(S_n)_r(V_n)_z-(S_n)_z(V_n)_r&=-\kappa(\omega^2 r)_z \quad \text{ on }B_R,\notag\\
S_n(r,0)&=e^{\frac{\mu s_0}{\gamma}},\\
\nabla \cdot(S_n\nabla V_n^\#)&=-\frac{4\pi g(V_n)}{S_n}+\kappa \nabla \cdot(\omega^2 re_r)\quad \text{ on }B_R, \label{eq: Vn sharp}\\
V_n^\#&=\frac{1}{|\cdot|}*\frac{g(V_n)}{S_n}+\alpha_n \quad \text{ on }\partial B_R\notag\\
\alpha_n^\# &= \alpha_n+\int_{B_R}\frac{g(V_n)}{S_n}~dx-M.\notag
\end{align*}
Taking the limit as $n$ tends to infinity, we see that \eqref{eq: curl}, \eqref{eq: div}, \eqref{eq: floor s}, \eqref{eq: BC}, \eqref{eq: mass} are satisfied by $V^*,\alpha^*,S^*$. 

To see that $V^*\in C^{2,\beta}(\overline{B_R})$ and $\|(V^*,\alpha^*)-(V^0,\alpha^0)\|_X\le \ep$, we recall from Lemma \ref{lem: bounded high norm} that $\|(V_n,\alpha_n)-(V^0,\alpha_0)\|_X< \epsilon$. So
$$
\|V_n-V^0\|_2+\sum_{i,j}\frac{|\partial_i\pa_j [V_n-V^0](x)-\pa_i\pa_j [V_n-V^0](y)|}{|x-y|^\beta}+|\alpha_n-\alpha^0| <\epsilon.
$$
Taking the limit as $n$ tends to infinity, we get $V^*\in C^{2,\beta}(\overline{B_R})$ and 
$\|(V^*,\alpha^*)-(V^0,\alpha^0)\|_X\le \ep$.  Choosing $\ep< \sup_{|x|>(R^0+R)/2} V^0(x)$, 
and recalling that $V^0$ is positive precisely on $B_{R^0}$, 
it follows that $V_+^*$ is supported on $B_{{(R^0+R)}/{2}}$ (see Remark \ref{lem: support}).  
That $S^*=1$ outside $B_{R_1}$ was already proven in Theorem  \ref{lem: V to S map}. 

Finally in order to prove uniqueness, suppose there are two such 
fixed points $(V_1^*,\alpha_1^*,S_1^*)$ and $(V_2^*,\alpha_2^*,S_2^*)$. We can write $S^*_i=\S(V^*_i)$, and 
\eqn\label{eq: unique}
(V^*_i,\alpha^*_i)=(V^*_i,\alpha^*_i)-\Lambda^{-1}[(V^*_i,\alpha^*_i)-\F(V^*_i,\alpha^*_i)]
\eeqn
for $i=1,2$. We can now simply repeat on \eqref{eq: unique} the low-norm estimates on \eqref{eq: iteration} obtained in Lemma \ref{lem: contraction} to get 
\eqn
\|(V^*_1-V^*_2,\alpha^*_1-\alpha^*_2)\|_Y\le \frac12\|(V^*_1-V^*_2,\alpha^*_1-\alpha^*_2)\|_Y.
\eeqn
Thus $\|(V^*_1-V^*_2,\alpha^*_1-\alpha^*_2)\|_Y=0$ and the two solutions coincide. 
\end{proof}

\vspace{.2 in}

\noindent{\bf Acknowledgements.}  JJ is supported in part by the NSF DMS-grant 2009458. YW is supported in part by the NSF DMS-grant 2006212.


\begin{thebibliography}{10}


\bibitem{Ach1991} J. F. G. Auchmuty, The global branching of rotating stars, Arch. Rational Mech. Anal. 114 (1991), 179-194. 

\bibitem{Alinhac} S. Alinhac and P. G\'erard,  
Pseudo-differential Operators and the Nash-Moser Theorem, 
Graduate Studies in Mathematics, vol. 82, AMS, 2007.  

\bibitem{AuchmutyB} J. F. G. Auchmuty and R. Beals, Variational solutions of some nonlinear free boundary problems, Arch. Rational Mech. Anal., 43 (1971), 255-271.

\bibitem{CaF} L. A. Caffarelli and A. Friedman, The shape of axisymmetric rotating fluid, J. Funct. Anal. 35 (1980), 100-142

\bibitem{Callen} H. B. Callen, Thermodynamics and an Introduction to Thermostatistics, 2nd ed., John Wiley and Sons, 1985

\bibitem{Chandra1933} S. Chandrasekhar, The equilibrium of distorted polytropes (I), Mon. Not. Royal Astron. Soc., 93(1933), 390-405.


\bibitem{Chandra} S. Chandrasekhar, An Introduction to the Study of Stellar Structure, Univ, Chicago Press, 1939.

\bibitem{Chandrasekhar1967} S. Chandrasekhar, Ellipsoidal figures of equilibrium --An historical account, Commun. Pure Appl. Math., 20(1967), 251-265.


\bibitem{CoFr} R. Courant and K. O. Friedrichs, 
{Supersonic Flow and Shock Waves}, 
Interscience, 1948 

\bibitem{DLY} Y. Deng, T-P. Liu, T. Yang and Z. Yao, Solutions of Euler-Poisson equations for gaseous stars. Arch Ration Mech Anal, 2002, 164: 261-285

\bibitem{DY} Y. Deng, T. Yang, Multiplicity of stationary solutions to the Euler-Poisson equation, J. Differential Equations 231 (2006), 252-289


\bibitem{Fermi} E. Fermi, Thermodynamics, Dover Publications, 1956


\bibitem{FrTur3} A. Friedman and B. Turkington, Existence and dimensions of a rotating white dwarf. J. Differential Equations 42 (1981), no. 3, 414-437

\bibitem{FLS}
{P. Federbush, T. Luo, and J. Smoller},
\newblock Existence of magnetic compressible fluid stars. 
\newblock Arch Ration Mech Anal, 215 (2015), 611-631. 

\bibitem{GilbargTru}
{D. Gilbarg and N.S. Trudinger},
\newblock {Elliptic partial differential equations of second order}, Vol. 224. 
\newblock {Springer}, 2001

\bibitem{Hansen} 
C. Hansen, S. Kawaler and V. Trimble, Stellar Interiors - Physical Principles, Structure, and Evolution, 2nd Ed. , Springer, 2004.  

\bibitem{Heilig} U. Heilig, On Lichtenstein's analysis of rotating newtonian stars, 
Ann. Inst. Henri Poincar\'{e}, 60(1994), 457-487.

\bibitem{Ireland} L. Ireland and M. Browning, The radius and entropy of a magnetized, rotating, fully convective star: analysis with depth-dependent mixing length theories, 
Astroph. J., 856 (2018), 156-176.  

\bibitem{JJTM} J. Jang and T. Makino, On slowly rotating 
axisymmetric solutions of the Euler-Poisson  equations. Arch. Rational Mech. Anal., 225(2017), 873-900.

\bibitem{JJTM2} J. Jang and T. Makino, 
On rotating axisymmetric solutions of the Euler-Poisson equations. J. Differential Equations 266 (2019), no. 7, 3942-3972

\bibitem{JSW19} J. Jang, W. A. Strauss and Y. Wu, 
Existence of rotating magnetic stars. Phys. D 397 (2019), 65-74



\bibitem{jardetzky2013theories}
{W.~S. Jardetzky}, 
\newblock {Theories of figures of celestial bodies},
\newblock Courier Corporation, 2013.

\bibitem{Krylov} N. V. Krylov, 
Lectures on Elliptic and Parabolic Equations in Holder Spaces, AMS, 1996.

\bibitem{Li} Y-Y. Li, On uniformly rotating stars,
Arch. Rational Mech. Anal., 115(1991), 367-393.

\bibitem{Lichtenstein} L. Lichtenstein, Untersuchungen \"{u}ber die Gleichgewichtsfiguren rotierender Fl\"{u}sigkeiten, deren Teilchen einander nach dem Newtonschen
Gesetze anziehen; Dritte Abhandlung; Nichthomogene Fl\"{u}ssigkeiten; Figur der Erde, Mathematische
Zeitschrift, 36(1933), 481-562.




\bibitem{LuoS0} T. Luo and J. Smoller, 
Rotating fluids with self-gravitation in bounded domains. Archive for rational mechanics and analysis 173, 3 (2004), 345-377. 

\bibitem{LuoS} T. Luo and J. Smoller, Existence and
non-linear stability of rotating star solutions
of the compressible euler-Poisson equations, Arch. Rational Mech. Anal., 191(2009), 447-496.



\bibitem{SW17} W. A. Strauss and Y. Wu, Steady states of rotating stars and galaxies, SIAM J. Math. Anal., 49(2017), 4865-4914.

\bibitem{SW19} W. A. Strauss and Y. Wu, Rapidly rotating stars. Comm. Math. Phys. 368 (2019), no. 2, 701-721



\bibitem{SW20} W. A. Strauss and Y. Wu, Rapidly rotating white dwarfs. Nonlinearity 33 (2020), no. 9, 4783-4798.


\bibitem{Tassoul}
{J.-L. Tassoul}, 
\newblock {Stellar Rotation},
\newblock {Cambridge U. Press}, 2000.



\bibitem{Wu15}  Y. Wu, On rotating star solutions to the non-isentropic Euler-Poisson equations. J. Differential Equations 259 (2015), no. 12, 7161-7198

\bibitem{Wu16}  Y. Wu, Existence of rotating planet solutions to the Euler-Poisson equations with an inner hard core. Arch. Ration. Mech. Anal. 219 (2016), no. 1, 1-26

\bibitem{Wu20}  Y. Wu, Global continuation and the theory of rotating stars. Quart. Appl. Math. 78 (2020), no. 1, 147-159

\bibitem{Yuan21} Y. Yuan, Variational rotating solutions to non-isentropic
Euler-Poisson equations with prescribed total mass. Science China Mathematics, (2021), 1-18

\end{thebibliography}
\end{document}